 \theoremstyle{plain}
 \newtheorem{theorem}{Theorem}[section]
 \newtheorem{lemma}[theorem]{Lemma}
 \newtheorem{corollary}[theorem]{Corollary}
 \newtheorem{proposition}[theorem]{Proposition}
 \theoremstyle{definition}
 \newtheorem{definition}[theorem]{Definition}
\theoremstyle{example}
 \newtheorem*{example}{Example}
 \theoremstyle{remark}
 \newtheorem*{remark}{Remark}
 \newtheorem*{notation}{Notation}
 \title{Gravitational instantons with faster than quadratic curvature decay (I)}
 \author{Gao Chen and Xiuxiong Chen}
\begin{document}

 \maketitle

\tableofcontents

 \section{Introduction}

 A gravitaional instanton is a complete, hyperk\"ahler 4-manifold with curvature decaying fast enough at infinity. In 1977, gravitational instanton was first introduced by Hawking as building block of the Euclidean quantum gravity theory \cite{Hawking}. Even though physicists expect the faster than quadratic curvature decay at infinity,  this seems has not been made precise in literatures.

  This is our first paper in a series to study gravitational instantons. For clarity,  we always assume that the curvature satisfies a decay condition

  \begin{equation} |\mathrm{Rm}|(x) \le r(x)^{-2-\epsilon},\;\label{eq:faster than quadratic decay} \end{equation}

  where $r(x)$ denotes the metric distance to a base point $o$ in the complex surface and $\epsilon > 0$ is any small positive number, say $<\frac{1}{100}.\;$ Under those conditions, we want to study two fundamental questions:

  \begin{enumerate}
  \item The differential and metric structure of the infinity of these gravitational instantons. Note that this is different from the tangent cone at infinity, especially if the volume growth is sub-Euclidean.
  \item Given these end structures, to what extent, do we know these instantons globally and holomorphically? In other words, is gravitational instanton uniquely determined by its end structure?
  \end{enumerate}

   Both problems seem to be well known to the research community. Since 1977, many examples of gravitational instantons have been constructed \cite{Hawking} \cite{AtiyahHitchin} \cite{Kronheimer1} \cite{CherkisKapustin}. The end structures of these examples are completely known now. According to the volume growth rate, they can be divided into four categories:  ALE, ALF, ALG and ALH, where the volume growth are of order 4,3,2 and 1 respectively. For the convenience of readers, we will give a precise definition of these ends in Section 2. There is a folklore conjecture that when the curvature decay fast enough, any gravitational instantons must be asymptotic to one of the standard models of ends.

   In ALE case, we understand these instantons completely through the work of Kronheimer in \cite{Kronheimer1}  \cite{Kronheimer2}. In the remaining cases, the asymptotical volume growth rate is usually hard to control, and may oscillate and may even not be an integer. In an important paper, with additional assumption that the volume growth rate is sub-Euclidean but at least cubic and a slightly weaker curvature decay condition depending on volume growth rate,  Minerbe \cite{MinerbeMass} \cite{MinerbeAsymptotic} proved that it must be ALF.  In our paper, we first prove the folklore conjecture.

  \begin{theorem}
  (Main Theorem 1) Let $(M^4,g)$ be a connected complete hyperk\"ahler manifold with curvature decaying as (\ref{eq:faster than quadratic decay}), then it must be asymptotic to the standard metric of order $\epsilon$. Consequently, it must be one of the four families: ALE,ALF, ALG and ALH.
  \label{main-theorem-1}
  \end{theorem}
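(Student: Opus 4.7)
The plan is to combine Cheeger--Colding theory for Ricci-flat manifolds with the rigidity supplied by the hyperk\"ahler structure and the decay rate $\epsilon$.

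First I would analyze the tangent cone at infinity. Hyperk\"ahler implies Ricci-flat, so Bishop--Gromov gives monotonicity of volume ratios and yields an asymptotic volume growth order $\alpha \in (0,4]$. For any sequence $r_i \to \infty$, the rescaled pointed manifolds $(M, r_i^{-2}g, o)$ have curvature bounded by $r_i^{-\epsilon} \to 0$ at unit scale by \eqref{eq:faster than quadratic decay}, so pointed Gromov--Hausdorff subsequential limits are flat Ricci-flat spaces; by Cheeger--Colding each tangent cone at infinity is a flat metric cone $C(Y)$.

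Next, I would classify such cones. The weak limit of the three parallel K\"ahler forms constrains $C(Y)$ to be a flat Euclidean quotient of some integer dimension $d \in \{1,2,3,4\}$, corresponding to ALE, ALF, ALG, and ALH respectively. The hard part is to show that (a) the growth order $\alpha$ equals such an integer $d$, (b) the tangent cone at infinity is unique (independent of the sequence $r_i$), and (c) the resulting limit can be identified with one of the four standard ends. In the sub-Euclidean cases one must also construct the torus fibration whose fibers collapse in the limit; for this I would use approximate Killing fields arising from the translational symmetries of the Euclidean limit together with the hyperk\"ahler moment-map machinery.

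Finally, to upgrade Gromov--Hausdorff convergence to asymptotic convergence of order $\epsilon$, I would work scale-by-scale on dyadic annuli $\{2^j\le r\le 2^{j+1}\}$. The Ricci-flat equation in an appropriate K\"ahler or harmonic gauge is elliptic, so interior Schauder estimates combined with the pointwise bound $|\mathrm{Rm}|\le r^{-2-\epsilon}$ propagate polynomial decay of the metric deviation to all $C^{k,\alpha}$ orders; summing the estimates over dyadic scales and fixing the gauge using the parallel complex structures should yield the stated rate $\epsilon$. I anticipate the main obstacle to be the classification step, especially the ALG and ALH ranges where Minerbe's sub-Euclidean-but-at-least-cubic methods no longer apply: ruling out non-integer growth and non-unique tangent cones there seems to require fully exploiting the hyperk\"ahler rigidity rather than just Ricci-flat harmonic analysis on a small base.
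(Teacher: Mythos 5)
Your outline identifies the right high-level landmarks (Ricci-flatness, flat tangent cones, a torus fibration, and a gauge-fixed elliptic estimate for the decay rate), but several of the load-bearing steps are left vague or are attributed to the wrong source, and in some places the mechanism you propose either does not apply or is circular.

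First, uniqueness of the tangent cone at infinity does not follow from Cheeger--Colding as you state it. Cheeger--Colding's volume-cone-implies-metric-cone theorem needs noncollapsing; in the ALF/ALG/ALH regime the injectivity radius collapses and the rescaled limits are lower-dimensional, so that machinery does not directly give you a metric cone, let alone a unique one. The paper instead invokes Kasue's compactification theorem for manifolds with asymptotically nonnegative curvature (Theorem \ref{Kasue} and its consequence Theorem \ref{uniqueness-of-tangent-cone}), which is exactly tailored to the collapsed case and yields uniqueness directly. You would need to replace your Cheeger--Colding step by something of this kind (or prove an equivalent). Second, your classification step --- ``the weak limit of the three parallel K\"ahler forms constrains $C(Y)$'' --- is not yet an argument. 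What the paper actually does in Theorem \ref{local-tangent-cone} is pass to the covering-transform picture on the tangent space, apply Fukaya's convergence of the local groups $G_i$ to a Lie group $G\subset\mathbb{R}^4\rtimes\mathrm{SU}(2)$, and then rule out various Lie algebras $\mathfrak{g}$ by quaternion arithmetic in $\mathfrak{su}(2)$ (e.g., a nonzero element of $\mathfrak{su}(2)$ is invertible, and the bracket is twice the cross product), combined with the quantitative holonomy bound of Theorem \ref{hol-control} to force $G$ to act freely. Nothing analogous appears in your plan. Third, your proposal to build the torus fibration via ``approximate Killing fields plus the hyperk\"ahler moment-map machinery'' is circular: moment maps presuppose the symmetry you are trying to construct. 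The paper builds the fibration directly from short geodesic loops --- sliding them along rays (Proposition \ref{fundamental-equation}), controlling their holonomy by an ODE comparison against the Jacobi equation $J''=(t/2)^{-2-\epsilon}J$ (Theorem \ref{hol-control}), decomposing them into a basis using the Lie-group lemma (Theorem \ref{generator}), and then averaging over the local $\mathbb{T}^k$-actions (Theorems \ref{local-fiberation}, \ref{fiberation}). Finally, your last step --- ``interior Schauder on dyadic annuli in harmonic gauge'' --- cannot be applied directly to $M$ in the collapsed cases, precisely because the injectivity radius goes to zero and there is no uniform harmonic chart. The paper's $O'(r^{-\epsilon})$ rate comes from two ingredients you do not supply: (a) the iteration on loop length and holonomy in Theorems \ref{3d-tangent-cone}/\ref{2d-tangent-cone}/\ref{1d-tangent-cone} giving $|\mathrm{hol}-\mathrm{Id}|=O(r^{-1-\epsilon})$, and (b) after the invariant model metric exists, applying Bando--Kasue--Nakajima on the noncollapsed $(4-k)$-dimensional base to put it in a good chart (Theorem \ref{standard-fiberation}). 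Your plan would need both.

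\end{document}
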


  For more detail about this theorem, see Theorem \ref{one-end}, Theorem \ref{ALE}, and Theorem \ref{standard-fiberation}.
  We would like to remark that the curvature condition can not be weaken to $|\mathrm{Rm}|=O(r^{-2})$.  In 2012, besides the study of ALG and ALH instantons on rational elliptic surfaces, Hein \cite{Hein} also constructed two new classes of hyperk\"ahler metrics on rational elliptic surfaces with volume growth, injective radius decay, and curvature decay rates $r^{4/3}$, $r^{-1/3}$, $r^{-2}$, and $r^2$, $(\log r)^{-1/2}$, $r^{-2}(\log r)^{-1}$, respectively. Note that  curvature does not satisfy (\ref{eq:faster than quadratic decay}) and  they do not belong to any of the four families!

  Our new contribution lies in ALG and ALH cases; in ALF case, our contribution is to remove the volume growth constraint from  Minerbe's work \cite{MinerbeMass}. In fact, Minerbe's volume growth constraint becomes an corollary instead condition of our first main theorem. In particular, we can now apply his work and improve the curvature decay rate of an ALF instanton to $O(r^{-3})$. Therefore, the asymptotic rate can be improved to any $\delta<1$. For ALH-non-splitting instantons, we can also improve the curvature decay rate. It turns out that the metric must converge to the flat model exponentially. For more details, see Theorem \ref{exponential-asymptotics}. We believe that there is a similar self improvement for ALG instantons, but we will leave it for future study.\\

  For the second question, the crucial point is to understand  ``the end" holomorphically. In ICM 1978, Yau conjectured that every complete Calabi-Yau manifold can be compactified in the complex analytic sense \cite{Yau}. There are counterexamples if we only assume the completeness without fast curvature decaying condition \cite{AndersonKronheimerLebrun}. However, when we assume the faster than quadratic curvature decay condition, in both ALG and ALH-non-splitting cases, we can prove Yau's conjecture. In higher dimension $n\geq 3$, assuming the curvature exponentially decay and the metric is asymptotically cylindrical, Haskins, Hein and Nordstr\"om \cite{HaskinsHeinNordstrom} constructed a compactification and therefore verified Yau's conjecture in their settings.

  \begin{theorem}
  (Main Theorem 2)
  For any ALG or ALH-non-splitting gravitational instanton $M$, there exist a compact elliptic surface $\bar M$ with a meromorphic function $u:\bar M\rightarrow\mathbb{CP}^1$ whose generic fiber is torus. The fiber $D=\{u=\infty\}$ is regular if $M$ is ALH, while it is either regular or of type I$_0^*$, II, II$^*$, III, III$^*$, IV, IV$^*$ if $M$ is ALG. There exist an $(a_1,a_2,a_3)$ in $\mathbb{S}^2$ such that when we use $a_1I+a_2J+a_3K$ as the complex structure, $M$ is biholomorphic to $\bar M-D$.
  \label{main-theorem-2}
  \end{theorem}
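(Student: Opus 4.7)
The strategy is to upgrade the asymptotic classification of Main Theorem 1 to a global holomorphic compactification. Among the $\mathbb{S}^2$-family of complex structures $I_a = a_1 I + a_2 J + a_3 K$, I would first fix the unique point (up to sign) for which the standard model at infinity becomes an elliptic fibration: on an ALG end modeled on $(\mathbb{C}/\Gamma)\times T^2$ this is the complex structure making the base coordinate $z$ holomorphic, and on an ALH-non-splitting end it is the complex structure under which the asymptotic flat $T^3$ is a circle bundle over a holomorphic $T^2$. Pulling back the standard projection $\pi_0$ via the asymptotic diffeomorphism provided by Theorem 1 and cutting off produces a smooth function $\tilde u: M \to \mathbb{C}$ whose $\bar\partial$ is supported in a neighborhood of infinity and decays at least like $r^{-2-\epsilon}$ by \eqref{eq:faster than quadratic decay}.

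Next I would correct $\tilde u$ by solving $\bar\partial v = \bar\partial \tilde u$ with $v\to 0$ at infinity, using weighted $L^2$ Hodge theory on the Ricci-flat K\"ahler manifold $(M,I_a)$. The resulting function $u = \tilde u - v$ is then holomorphic, proper, and asymptotic to $\pi_0$. Its regular fibers are smooth closed holomorphic curves, and restricting the hyperk\"ahler holomorphic symplectic form (the one adapted to $I_a$) trivializes their canonical bundle, forcing them to be elliptic curves. Since singular fibers are an analytic subset and the fibers near infinity are smooth, only finitely many are singular.

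To compactify, I would add the divisor $D=u^{-1}(\infty)$. The punctured neighborhood of $\infty \in \mathbb{CP}^1$ carries a holomorphic family of elliptic curves whose monodromy is dictated by the ALG holonomy group $\Gamma \subset SL(2,\mathbb{Z})$: in the ALH-non-splitting case $\Gamma$ is trivial and the fiber extends as a regular elliptic curve, while in the ALG case $\Gamma$ is a non-trivial finite-order subgroup of $SL(2,\mathbb{Z})$, and Kodaira's classification of singular fibers assigns exactly one of the finite-monodromy types $\mathrm{I}_0^*$, $\mathrm{II}$, $\mathrm{II}^*$, $\mathrm{III}$, $\mathrm{III}^*$, $\mathrm{IV}$, $\mathrm{IV}^*$. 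Gluing Kodaira's local model across $\infty$ produces a compact elliptic surface $\bar M$ with $M \cong \bar M \setminus D$ as a complex manifold for the structure $I_a$.

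The main obstacle is the middle step: producing a genuinely holomorphic function on a non-compact Ricci-flat K\"ahler manifold whose asymptotic expansion matches the prescribed model tightly enough to recover both properness and the correct Kodaira type at $D$. Standard $L^2$ existence only places $v$ in some weighted Sobolev space, whereas what is needed is a sharp decay rate ensuring that the period map of $u$ converges at infinity to the model period. This requires weighted $\bar\partial$-estimates tailored to the sub-Euclidean volume growth of ALG and ALH ends, compatible with the curvature rate $r^{-2-\epsilon}$, together with an asymptotic expansion of $v$ precise enough to identify the monodromy. A secondary delicate point is excluding the infinite-monodromy Kodaira types $\mathrm{I}_n$ and $\mathrm{I}_n^*$, which is done by observing that they correspond to parabolic elements of $SL(2,\mathbb{Z})$ and cannot arise from the finite holonomy of an ALG model.
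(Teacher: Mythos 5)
Your high-level strategy — choose the right complex structure, pull back the model projection, correct by solving a $\bar\partial$-type equation, then compactify via Kodaira — does track the paper's outline, but you gloss over or misidentify the places where the real work happens.

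First, the correction step. You propose directly solving $\bar\partial v = \bar\partial\tilde u$ by ``weighted $L^2$ Hodge theory,'' but on a complete Ricci-flat surface this is not a routine Hörmander estimate: the obstacle is the potential $(0,2)$-cohomology. The paper instead solves the Laplacian equation $\Delta\psi = \bar\partial f$ for a $(0,1)$-form $\psi$ in a weighted Sobolev space (Theorem~\ref{solve-laplacian-equation}), then observes that $\bar\partial\psi$ is a harmonic $(0,2)$-form, hence of the form $\xi\omega^+$ with $\xi$ a harmonic function. The growth classification of harmonic functions (Theorem~\ref{growth-rate}), which crucially uses the precise cone angles $\beta$, forces $\xi$ to be constant, so that $f + \bar\partial^*\psi$ is holomorphic. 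Your proposal does not address the $(0,2)$ obstruction at all, and this is not a detail one can black-box.

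Second, you treat ALG and ALH uniformly, but the ALH case is substantially harder. Polynomial-weight estimates do not suffice there; the paper first proves exponential decay of curvature (Theorem~\ref{exponential-decay-of-curvature}), upgrades the asymptotics to an exponential rate (Theorem~\ref{exponential-asymptotics}), and only then develops exponentially weighted estimates (Theorems~\ref{estimate-on-r-times-T3}, \ref{solve-laplacian-equation-for-ALH}). Without this self-improvement your correction $v$ will not decay fast enough to control the periods.

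Third, for the compactification you assert that ``gluing Kodaira's local model across $\infty$'' finishes the argument, but one must first produce a \emph{holomorphic} section near infinity. The paper does this by measuring the failure $e(u)\,\mathrm{d}\bar u$ of the pulled-back zero section to be holomorphic, solving $\bar\partial E(u) = e(u)\,\mathrm{d}\bar u$ on the punctured disc, and flowing by $-E(u)X$ with $X$ the Hamiltonian vector field $\omega^+(\cdot,X) = \mathrm{d}u$. You should address this.

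Finally, a factual slip: you claim that in the ALG case the monodromy $\Gamma\subset\mathrm{SL}(2,\mathbb{Z})$ is \emph{non}-trivial. But the statement allows the regular fiber ($\beta=1$, trivial monodromy) for ALG as well; the exclusion is only of the infinite-order types $I_n$, $I_n^*$ with $n\geq 1$, and this exclusion happens already at the Riemannian level in Theorem~\ref{standard-fiberation} where the lattice monodromy is forced to be a finite-order rotation $e^{i\theta}$, not during the complex-analytic compactification as your proposal suggests.
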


  The converse problem is actually very well known and has been studied actively: Given a compact complex manifold $\bar M$ and $D$ an anti-canonical  divisor, do we have complete Ricci-flat K\"ahler metric on $\bar M\setminus D$? Tian-Yau \cite{TianYau} proved that for quasi-projective surface $M=\bar{M}-D$ with $\bar{M}$ smooth and $D$ a smooth anticanonical divisor in $\bar{M}$, as long as $D^2\ge0$, $M = \bar M\setminus D$ has a complete Ricci-flat K\"ahler metric which has the volume growth of linear order (i.e ALH).  In \cite{Hein}, Hein  generalized Tian-Yau's work and constructed ALG gravitational instantons on the complement of the anticanonical divisor to the rational elliptic surface $\bar{M}\setminus D$. We do not know whether one can repeat the Hein-Tian-Yau construction for general elliptic surface which is not algebraic. The complete understanding of ALG and ALH-non-splitting gravitational instantons is even harder.\\

  In ALF case, more discussions are needed:
  \begin{enumerate}
  \item In ALF-$A_k$ case, Minerbe \cite{MinerbeMultiTaubNUT} proved that any ALF-$A_k$ instanton must be the trivial product or the multi-Taub-NUT metric. In particular, there is no ALF-$A_k$ instantons for $k<-1$.

  \item  In ALF-$D_k$ case, Biquard and Minerbe \cite{BiquardMinerbe} proved that there is no ALF-$D_k$ instantons for $k<0$. For $k\ge 0$, the first example was constructed by Atiyah and Hitchin \cite{AtiyahHitchin}, where $k=0$. Ivanov and Ro\v{c}ek \cite{IvanovRocek} conjectured a formula for larger $k$ using generalized Legendre transform developed by Lindstr\"om and Ro\v{c}ek. This conjecture was proved by Cherkis and Kapustin \cite{CherkisKapustin} and computed more explicitly by Cherkis and Hitchin \cite{CherkisHitchin}. It is conjectured that any ALF-$D_k$ instanton must be exactly the metric constructed by them. This conjecture has not been solved yet. However we are able to prove the existence of the $\mathcal{O}(4)$ multiplet which plays an important role in the Cherkis-Hitchin-Kapustin-Ivanov-Lindstr\"om-Ro\v{c}ek construction.
 \end{enumerate}

  \begin{theorem}
  (Main Theorem 3)
  In the ALF-$D_k$ case, there exists a holomorphic map from the twistor space of $M$ to the total space of the $\mathcal{O}(4)$ bundle over $\mathbb{CP}^1$ which commutes with both the projection to $\mathbb{CP}^1$ and the real structure.
  \label{holomorphic-map-on-twistor-space}
  \end{theorem}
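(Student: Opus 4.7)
The plan is to construct the $\mathcal{O}(4)$ section on the twistor space $Z(M)$ first over the end of $M$, where an approximate tri-holomorphic circle symmetry is available, and then to extend it holomorphically across the compact core.

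First, by Main Theorem 1 (Theorem \ref{main-theorem-1}), an ALF-$D_k$ instanton $M$ is asymptotic of order $\epsilon$ to the flat $D_k$ model, which is a $\mathbb{Z}_2$-quotient of a circle-invariant hyperk\"ahler manifold on $\mathbb{R}^3\times S^1$. Passing to the unramified double cover $\widetilde{M}_\infty$ of a neighborhood of infinity $M_\infty\subset M$, I would first upgrade the asymptotic tri-holomorphic Killing field into a genuine one by a perturbation argument that exploits the $r^{-2-\epsilon}$ curvature decay, producing a hyperk\"ahler moment map $\vec\mu=(\mu_1,\mu_2,\mu_3):\widetilde{M}_\infty\to\mathbb{R}^3$ equivariant with respect to the deck transformation $\sigma$ in the sense $\vec\mu\circ\sigma=-\vec\mu$. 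The standard twistor-space combination
\[
\eta(\zeta)=(\mu_1+i\mu_2)-2\mu_3\,\zeta-(\mu_1-i\mu_2)\,\zeta^2
\]
is then a holomorphic section of $\mathcal{O}(2)$ on the twistor space $Z(\widetilde{M}_\infty)$, and its square $\eta^2$ is $\sigma$-invariant, hence descends to a holomorphic section $\eta_0^2$ of $\mathcal{O}(4)$ on $Z(M_\infty)$, automatically compatible with the projection to $\mathbb{CP}^1$ and, since $\vec\mu$ is real, with the real structure.

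The next step is to extend $\eta_0^2$ from $Z(M_\infty)$ to all of $Z(M)$. Letting $\chi$ be a cutoff supported near infinity on $M$ (pulled back to $Z(M)$), the form $\alpha=\bar\partial(\chi\eta_0^2)$ is a compactly supported $\bar\partial$-closed $(0,1)$-form on $Z(M)$ with values in $\mathcal{O}(4)$. It suffices to solve $\bar\partial s=\alpha$ with $s$ decaying at infinity in the same sheaf, for then $\chi\eta_0^2-s$ is the desired global section. Via the Penrose transform this problem is equivalent, on the base $M$, to an elliptic system on symmetric powers of the spinor bundle with prescribed polynomial fall-off. Using the cubic volume growth, the asymptotic circle-bundle structure of the end, and the weighted Fredholm theory for ALF manifolds developed by Minerbe, one aims for solvability in a weight slightly smaller than the growth rate $r^4$ of $\eta_0^2$.

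The principal obstacle is this extension step. Two issues must be addressed: (i) the $S^1$ action exists only asymptotically and only on the double cover, so producing the honest moment map $\vec\mu$ on $\widetilde M_\infty$ demands a careful perturbation of the approximate Killing field, iteratively correcting the tri-holomorphy condition using the curvature decay; and (ii) the weighted $\bar\partial$-problem must be arranged so that the weight $4$ is not an indicial root of the relevant Laplacian, otherwise the extension is obstructed. For (ii) the $\mathbb{Z}_2$-equivariance is essential: it eliminates the odd spherical harmonics on $\mathbb{R}^3/\mathbb{Z}_2$ which would otherwise form the cokernel at this weight, so that the Fredholm index vanishes and a solution exists. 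Once $\tilde\eta^2:=\chi\eta_0^2-s$ is constructed, its holomorphy, fiberwise polynomial degree $4$ in $\zeta$, and compatibility with the real structure follow from the corresponding properties of $\eta_0^2$ together with the decay of $s$, yielding the required holomorphic map $Z(M)\to\mathrm{Tot}(\mathcal{O}(4)\to\mathbb{CP}^1)$.
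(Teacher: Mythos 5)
Your route is genuinely different from the paper's, and it contains a gap that is not a small technicality but the crux of the matter. You propose to produce an exact tri-holomorphic Killing field on the double cover of a neighborhood of infinity by "iteratively correcting" the approximate one coming from the Taub-NUT model. The asymptotic model metric $h$ does carry an exact tri-holomorphic circle action, but the actual metric $g$ agrees with $h$ only up to $O'(r^{-\epsilon})$, and there is no result in this paper (or in the literature invoked by it) asserting that this approximate symmetry can be integrated to an honest one. Establishing such a symmetry is essentially as strong as the rigidity conjecture for ALF-$D_k$ instantons, which the introduction explicitly flags as open: with an exact tri-holomorphic $\mathrm{U}(1)$ near infinity the end would be a Gibbons--Hawking quotient, and that is precisely the structure one is trying to detect. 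The Killing equations for all three complex structures simultaneously form an overdetermined system with nontrivial cohomological obstructions; the decay hypothesis alone does not make the linearization surjective, and you give no mechanism to kill the obstruction space. Without this step the moment map $\vec\mu$ does not exist, and the section $\eta_0^2$ on $Z(M_\infty)$ is never constructed.

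The paper avoids this trap entirely. It never works on the $5$-dimensional twistor space and never needs an exact circle symmetry. Instead, for each fixed $\zeta$ (i.e.\ for each complex structure $a_1I+a_2J+a_3K$) it uses the weighted $\bar\partial$-machinery of Section 4 on $M$ itself (Theorem \ref{ALF-Dk}) to produce a global holomorphic function asymptotic to the model $(-x_3+ix_2-2x_1\zeta-(-x_3-ix_2)\zeta^2)^2$ with error $O'(r^{2-\epsilon})$. The polynomial dependence on $\zeta$ is then not imposed but \emph{deduced}: since every harmonic function on an ALF-$D_k$ instanton has even integer growth rate (Theorem \ref{growth-rate}), a harmonic function asymptotic to a given leading term is unique once normalized, so the specific linear combinations of $u_i,v_i$ that should appear as coefficients of $\zeta^0,\dots,\zeta^4$ are forced (e.g.\ $u_2+u_3-4u_1$ is harmonic and $o(r^2)$, hence zero). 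This replaces your entire twistor-space $\bar\partial$-extension with a rigidity argument on $M$, and it is the only place where the ALF Fredholm theory is actually needed. Your point (ii) about the $\mathbb{Z}_2$-parity killing the problematic indicial root is the right intuition and is indeed how the paper chooses $(x_2+ix_3)^2$ rather than $x_2+ix_3$; but this plays out in the function-theoretic argument on $M$, not in a weighted $\bar\partial$-problem on $Z(M)$. If you want to proceed along twistor lines, the honest prerequisite would be a theorem producing the asymptotic tri-holomorphic symmetry, which is not available.
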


  For the definitions of twistor space and the real structure, see Theorem \ref{twistor-space} and Theorem \ref{holomorphic-map-on-twistor-space}.

  One of our main tools comes from the equivalence between the hyperk\"ahler condition and the Calabi-Yau condition. Actually, for hyperk\"ahler manifolds, we have three complex structures $I, J, K$. They induce three symplectic forms by $$\omega_1(X,Y)=g(IX,Y), \omega_2(X,Y)=g(JX,Y),\omega_3(X,Y)=g(KX,Y).$$ The form $\omega^+=\omega_2+i\omega_3$ is a $I$-holomorphic symplectic form. This induces the equivalence of $\mathrm{Sp}(1)$ and $\mathrm{SU}(2)$. Notice that for any $(a_1,a_2,a_3)\in\mathbb{S}^2$, $a_1 I+a_2 J+a_3 K$ is a K\"ahler structure. There is a special property of $\mathrm{Sp}(1)$: Given any vectors $v, w \in T_p$ which are orthogonal to each other and have same length, there exists an $(a_1,a_2,a_3)$ in $\mathbb{S}^2$ such that $(a_1I+a_2J+a_3K) v = w$. We will use this property to find the best complex structure.

  We obviously benefit from studying a series of papers by Minerbe \cite{MinerbeMass}, \cite{MinerbeAsymptotic}, and \cite{MinerbeMultiTaubNUT}. Although his work seems only valid in ALF-$A_k$ case,
  we manage to make some modest progress in all cases in the present work.\\

  \noindent{\bf Acknowledgement:}  Both authors are grateful to the insightful and helpful discussions with Sir Simon Donaldson, Blaine Lawson, Claude LeBrun and Martin Ro\v{c}ek. We also thank Gilles Carron and Yu Li for some suggestions of improvements on the first version of this paper.

 \section{Notations and definitions}

 First, let us understand the standard models near infinity. The explicit expression of those models are defined in Theorem \ref{standard-fiberation}. To avoid singularity, a ball $B_R$ is always removed.

 \begin{example} Let $(X, h_1)$ be any manifold of dimension $3-k$ with constant sectional curvature 1 and $C(X)$ its metric cone with standard flat metric $\mathrm{d}r^2 + r^2 h_1.\;$
 Let $\mathbb{T}^k$ be a $k$-dimensional flat torus. Then the $\mathbb{T}^k$ fibration $E$ over $C(X)-B_R$ with a $\mathbb{T}^k$ invariant metric $h$ provides the standard model near infinity.
 \begin{enumerate}
\item $C(X)=\mathbb{R}^4/\Gamma$, $\Gamma$ is a discrete subgroup in $\mathrm{SU}(2)\;$ acting freely on $\mathbb{S}^3$. In this case, $(E,h)=C(X)-B_R$ with the flat metric. It is called ALE.
\item
 $C(X)=\mathbb{R}^3$, $(E,h)$ is either the trivial product $(\mathbb{R}^3-B_R)\times\mathbb{S}^1$ or the quotient of the Taub-NUT metric with mass $m$ outside a ball by $\mathbb{Z}_{|e|}$, where $me<0$. It is called ALF-$A_k$ with $k=-1$ in the first case and $k=-e-1$ in the second case.
\item
  $C(X)=\mathbb{R}^3/\mathbb{Z}_2$, $(E,h)$ is either the $\mathbb{Z}_2$ quotient of the trivial product of $\mathbb{R}^3-B_R$ and $\mathbb{S}^1$ or the quotient of the Taub-NUT metric with mass $m$ outside a ball by the binary dihedral group $D_{4|e|}$ of order $4|e|$, where $me<0$. It is called ALF-$D_k$ with $k=2$ for the first case, and $k=-e+2$ for the second case.
\item
  $C(X)$ is the flat cone $\mathbb{C}_{\beta}$ with cone angle $2\pi\beta$, $(E,h)$ is a torus bundle over $\mathbb{C}_{\beta}-B_R$ with a flat metric, where $(\beta,E,h)$ are in the list of some special values; It is called ALG.
\item
  $C(X)=\mathbb{R}$, $(E,h)$ is the product of $\mathbb{R}-B_R$ and a flat 3-torus. It is called ALH-splitting.
\item
$C(X)=\mathbb{R}_+$, $(E,h)$ is the product of $[R,+\infty)$ and a flat 3-torus. It is called ALH-non-splitting.
 \end{enumerate}
 \end{example}

 We may call such fiberation a standard model near infinity. It serves as an asymptotic model in the following sense:

 \begin{definition}  A complete Riemannian manifold  $(M, g)$ is called asymptotic to the standard model $(E,h)$ of order $\delta$ if there exist a bounded domain $K \subset M$, and a diffeomorphism $\Phi:  E\rightarrow M \setminus K$ such that
 $$\Phi^* g =  h + O'(r^{-\delta})$$
 for some $\delta > 0.\;$
 \end{definition}

 Any manifold asymptotic to the standard ALE model is called ALE. It stands for asymptotically locally Euclidean.
 Similarly, any manifold asymptotic to the standard ALF model is called ALF. It means asymptotically locally flat.
 The ALG and ALH manifold are defined similarly. The letters ``G" and ``H" do not have any meanings. They are just the letters after ``E" and ``F".

 Notice that our definition of ALH manifold is different from the definition of Hein in \cite{Hein}. However, Theorem \ref{exponential-asymptotics} implies that there is no essential difference for gravitational instantons.

 \begin{notation}
 $o$ is a fixed point in $M$. In Section 3, $r(p)=\mathrm{dist}(o,p)$ is the geodesic distance between $o$ and $p$. In Section 4, $E$ is a fiberation over $C(X)-B_R=\{(r,\theta):r\ge R,\theta\in X\}$. So the pull back of $r$ by the projection is a function on $E$. On $M$, we pull back that function, cut it off by some smooth function, and add 1 to get a smooth function $r\ge 1$. The reader should be careful about the switch of the meanings of $r$ in different sections of our paper.

 $O'(r^\alpha)$ means that for any $m\ge 0$, the $m$-th derivative of the tensor belongs to $O(r^{\alpha-m})$.
 $\chi$ will be a smooth cut-off function from $(-\infty,+\infty)$ to $[0,1]$ such that $\chi\equiv 1$ on $(-\infty,1]$ and $\chi\equiv 0$ on $[2,\infty)$. We will always use $\Delta=-\nabla^*\nabla$ as the Laplacian operator.
 \end{notation}

\section{Asymptotic Fibration}

In this section, we prove the main theorem 1. It is essentially a theorem in Riemannian geometry. The basic tool is to view a ball in the manifold $M$ as a quotient of the ball inside the tangent space equipped with the metric pulled back from exponential map by the group of local covering transforms which correspond to the short geodesic loops in $M$. In the first subsection, we discuss this picture. In the second subsection, we provide a rough estimate of the holonomy of short geodesic loops. In the third subsection, we use that rough estimate to classify the tangent cone at infinity. In the fourth subsection, we use this information to get a better control of geodesic loops. Finally, we use this better control to prove our main theorem 1.

\subsection{Short geodesic loops and the local covering space}

In 1978 Gromov \cite{Gromov} started the research of almost flat manifolds, i.e. manifold with very small curvature. In 1981, Buser and Karcher wrote a book \cite{BuserKarcher} to explain the ideas of Gromov in detail. In 1982 Ruh \cite{Ruh} gave a new way to understand it. Assume $p$ is a point in $M$. The exponential map $\exp:T_p\rightarrow M$ is a local covering map inside the conjugate radius. We can pull back the metric from $M$ using the exponential map inside conjugate radius. There is a lemma about the local geometry on the tangent space:

\begin{lemma}
Suppose $g_{ij}$ is a metric on $B_1(0)\subset \mathbb{R}^{n}$ satisfying the following condition:

(1) The curvature is bounded by $\Lambda^2$;

(2) $g_{ij}(0)=\delta_{ij}$;

(3) The line $\gamma(t)=t\mathbf{u}$ is always a geodesic for any unit vector $\mathbf{u}$.

Then there exist constants $\Lambda(n)$ and $C(m,n)$ such that as long as $\Lambda\le\Lambda(n)<\pi/2$,

(1) Any two points $x$ and $y$ in $B_1(0)$ can be connected by a unique minimal geodesic inside $B_1(0)$;

(2) If the Ricci curvature is identically 0, then $|D^m(g_{ij}(x)-\delta_{ij})|< C(m,n)\Lambda^2$ for all $m\ge 0$ and $x\in B_{1/2}$.

\label{local-geometry}
\end{lemma}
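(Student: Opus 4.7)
The plan is to split the two assertions: (1) follows from Hessian comparison for the squared distance function $\rho^2$, while (2) follows from a Rauch $C^0$ estimate combined with an elliptic bootstrap for the Ricci-flat equation in harmonic coordinates.

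For the first part, hypothesis (3) says every radial line $t\mapsto t\mathbf{u}$ is a unit-speed geodesic, so $\rho(x):=|x|$ coincides with the $g$-distance from $0$ and $B_1(0)$ sits well inside the conjugate radius $\pi/\Lambda\ge\pi/\Lambda(n)>2$. Rauch's comparison then yields
$$\bigl|\mathrm{Hess}_g(\rho^2/2)-g\bigr|\le C(n)\Lambda^2$$
on $B_1(0)$, so once $\Lambda(n)$ is chosen small enough, $\rho^2/2$ is strictly $g$-convex. If $\gamma:[0,1]\to M$ is any $g$-geodesic with endpoints in $B_1(0)$, then $t\mapsto\rho^2(\gamma(t))$ is strictly convex, hence attains its maximum at an endpoint, and $\gamma$ never leaves $B_1(0)$. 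Uniqueness of the minimizing geodesic follows because two distinct minimizers with the same endpoints would violate the strict convexity of $\rho^2$.

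For the second part, Rauch applied to Jacobi fields $J(t)$ along radial geodesics with $J(0)=0$ gives $|J|_g(t)=t(1+O(\Lambda^2 t^2))$, hence the $C^0$ estimate
$$|g_{ij}(x)-\delta_{ij}|\le C(n)\Lambda^2 \quad\text{on } B_{3/4}.$$
I would then pass to harmonic coordinates $\tilde x^i$ on $B_{3/4}$, defined by $\Delta_g\tilde x^i=0$ with $\tilde x^i(0)=0$ and $d\tilde x^i(0)=dx^i$. In harmonic coordinates the Ricci tensor has the quasi-linear form
$$\mathrm{Ric}_{ij}=-\tfrac{1}{2}g^{ab}\partial_a\partial_b g_{ij}+Q_{ij}(g,\partial g)$$
with $Q_{ij}$ quadratic in $\partial g$, so the Ricci-flat hypothesis becomes a second-order elliptic system whose coefficients are $C^0$-close to $\delta^{ab}$. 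A standard Schauder bootstrap starting from the above $C^0$ bound yields $|g_{ij}-\delta_{ij}|_{C^m(B_{1/2})}\le C(m,n)\Lambda^2$ in harmonic coordinates; pulling back through the transition map to the original normal coordinates preserves the $\Lambda^2$ scaling at every order.

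The main obstacle is ensuring that the $O(\Lambda^2)$ homogeneity survives each step of the argument. The initial Rauch bound is quadratic in $\Lambda$, and the Ricci-flat equation is homogeneous in the sense that both sides vanish on the flat metric, so each Schauder iteration is multiplicative in the smallness parameter rather than additive; the only delicate point is the transition between normal and harmonic coordinates, which must itself be shown to differ from the identity by $O(\Lambda^2)$ to all orders. This is obtained by applying the same bootstrap to the correction $\tilde x^i-x^i$, which satisfies $\Delta_g(\tilde x^i-x^i)=-\Delta_g x^i$ with right-hand side of order $\Lambda^2$ and zero boundary data at the origin.
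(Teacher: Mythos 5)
Your overall strategy for part (2) --- harmonic coordinates plus an elliptic bootstrap for the Ricci-flat equation --- is the same as the paper's, but the crucial intermediate step is missing, and as written it would not close. The Rauch comparison gives a genuine $C^0$ bound $|g_{ij}-\delta_{ij}|\le C\Lambda^2$ in geodesic normal coordinates, but it does not give a $C^1$ bound: the first derivatives of $g$ in normal coordinates involve $\nabla\mathrm{Rm}$, which you have not assumed bounded. Your bootstrap of the correction $\tilde x^i-x^i$ needs $\Delta_g x^i=O(\Lambda^2)$, and $\Delta_g x^i$ is built from Christoffel symbols, i.e.\ from $\partial g$ --- exactly the quantity you don't yet control. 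Similarly, the Schauder iteration you invoke for $-\tfrac12 g^{ab}\partial_a\partial_b g_{ij}=-Q_{ij}(g,\partial g)$ has a right-hand side quadratic in $\partial g$, so it cannot be started from a $C^0$ bound alone. (There is also a smaller formal issue: prescribing $\tilde x^i(0)=0$ and $d\tilde x^i(0)=dx^i$ does not determine a solution of $\Delta_g\tilde x^i=0$; one needs Dirichlet data on a ball.)

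The paper avoids this circularity by not using normal coordinates as the starting chart. It invokes the Jost--Karcher construction of almost-linear functions $l_i$ whose gradients approximate a parallel frame and whose Hessians are already bounded by $C(n)\Lambda^2$, by a direct geometric comparison argument that does not require $C^1$ control of $g$ in any prior coordinate system. It then solves a genuine Dirichlet problem $\Delta u_i=\Delta l_i$, $u_i\in W^{2,p}\cap W^{1,p}_0$, to produce harmonic coordinates $h_i=l_i-u_i$ together with a $W^{1,p}$ bound on $g$ in those coordinates, and only then runs the $L^p$ (rather than Schauder) bootstrap on the Ricci-flat system. To repair your argument you would need to import precisely this Jost--Karcher input, or some equivalent device that produces initial $C^{1,\alpha}$ or $W^{2,p}$ control of a coordinate system from a $C^0$ curvature bound alone.

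For part (1), the paper simply cites Buser--Karcher (Proposition 6.4.6). Your alternative via strict convexity of $\rho^2/2$ is a reasonable route, but as stated it only shows that a geodesic with endpoints in $B_1(0)$ cannot exit $B_1(0)$; you have not addressed existence of a connecting geodesic in the (incomplete, open) ball, and the one-line uniqueness claim would need to be expanded --- for example via the second variation of energy along a homotopy of curves, or by applying the same Hessian comparison to $\rho_p^2$ for every base point $p\in B_1(0)$ rather than just $p=0$.
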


\begin{proof}
(1) It was proved by Buser and Karcher as the Proposition 6.4.6 in \cite{BuserKarcher}.

(2) Therefore, all the works in \cite{JostKarcher} apply. We can find functions $l_i$ satisfying $|\nabla l_i(x)-e_i(x)|\le C(n)\Lambda^2$ and $|\nabla^2 l_i(x)|\le C(n)\Lambda^2$ for all $x\in B_{1/2}(0)$ as long as $\Lambda(n)$ is small enough, where $e_i(x)$ is a vector field which is parallel along radical geodesics and
equals to $\frac{\partial}{\partial x_i}$ at origin.
For even smaller $\Lambda(n)$, we can use $l_i$ as coordinate functions in $L_{0.9}(0)=\{\sum l_i^2<(0.9)^2\}\subset B_1(0)=\{\sum x_i^2<1\}$. In this coordinate
$$|g_{ij}w^iw^j-|w|^2|\le C(n)\Lambda^2|w|^2\le 0.01 |w|^2,$$
$$|\partial_k g_{ij}|< C(n)\Lambda^2<1,$$
$$\Delta u=\frac{1}{\sqrt{G}}\partial_j(\sqrt{G}g^{ij}\frac{\partial u}{\partial l_i}).$$
What is more $|\Delta{l_i}|< C(n)\Lambda^2$. By Theorem 9.15 of \cite{GilbargTrudinger}, for all $1<p<\infty$, there is a unique solution $u_i\in W^{2,p}(L_{0.9})\cap W_{0}^{1,p}(L_{0.9})$ such that $\Delta u_i=\Delta l_i$.
By Lemma 9.17 of \cite{GilbargTrudinger}, we actually have
$$||u_i||_{W^{2,p}(L_{0.9}(0))}< C(n,p)||\Delta l_i||_{L^p(L_{0.9}(0))}< C(n,p)\Lambda^2.$$
By Sobelev embedding theorem (c.f. Theorem 7.26 of \cite{GilbargTrudinger}),
$$||u_i||_{C^1(\overline{L_{0.9}(0)})}< C(n)||u_i||_{W^{2,2n}(L_{0.9}(0))}<C(n)\Lambda^2.$$
In particular, when $\Lambda(n)$ is small enough, $h_i=l_i-u_i$ gives a harmonic coordinate in $H_{0.8}(0):=
\{\sum h_i^2<(0.8)^2\}\subset L_{0.9}(0)$.
In this harmonic coordinate, $1/1.02|w|^2<g_{ij}w^{i}w^{j}<1.02|w|^2.$
By elliptic regularity, actually all the above functions are smooth. So we can differentiate them to get equations.
Since $\Gamma_{ij}^{k}g^{ij}=0$, we know that $2\mathrm{Ric}_{mk}=g^{im}R_{ijkl}g^{jl}+g^{ik}R_{ijml}g^{jl}$ satisfies
$$g^{rs}\frac{\partial^2 (g_{ij}-\delta_{ij})}{\partial h_r\partial h_s}=-2\mathrm{Ric}_{ij}+Q_{ij}(g,\partial g)+Q_{ji}(g,\partial g),$$
where $$Q_{mk}(g,\partial g)=g^{jl}\partial_{l}g_{im}\Gamma^{i}_{kj}
-g^{jl}g_{im}\Gamma^{h}_{kj}\Gamma^{i}_{lh}-g_{im}\partial_{k}g^{jl}\Gamma^{i}_{jl}.$$
We already know that $||g_{ij}-\delta_{ij}||_{W^{1,p}(H_{0.8}(0))}<C(n)\Lambda^2$ from the $W^{2,p}$ bound of $u_i$.
So $||Q_{ij}(g,\partial g)||_{L^{p/2}(H_{0.8}(0))}<C(n)\Lambda^4$. When the Ricci curvature is identically 0, by Theorem 9.11 of \cite{GilbargTrudinger}, we have
$$||g_{ij}-\delta_{ij}||_{W^{2,p/2}(H_{0.7})}< C(n)(||g_{ij}-\delta_{ij}||_{L^{p/2}(H_{0.8})}+||Q_{ij}||_{L^{p/2}(H_{0.8})})<C(n)\Lambda^2.$$
After taking more derivatives, we can get the required bound in the harmonic coordinate. This in turn bounds the Christoffel symbol and gives a bound of the geodesic equation. So when we solve this geodesic equation, we can get the required bound in the geodesic ball.
\end{proof}

The above estimate is an interior estimate. The number $1/2$ can be replaced by any number smaller than 1.

To find out the local covering transform, we look at the preimage $p_1$ of $p$ under the exponential map inside $B_1(0)$. There is a local covering transform $F$ which maps $0$ to $p_1$. The image of the radical geodesic from $0$ to $p_1$ is a geodesic loop based at $p$. This gives a 1-1 correspondence between short geodesic loops and covering transforms.

Now suppose we have two short enough geodesic loops $\gamma_1$ and $\gamma_2$ with same base point $p$. Then they correspond to two local covering transforms $F_1$ and $F_2$. The composition $F_1\circ F_2$ is also a local covering transform. It corresponds to another geodesic loop based at $p$. It is exactly the product of $\gamma_1$ and $\gamma_2$ defined by Gromov.

For any $q$ close enough to $p$, choose an preimage $q_0$ of $q$ close enough to $0$, then $q_1=F(q_0)$ is another preimage of $q$ which is very close to $p_1$.
The image of the shortest geodesic connecting $q_1$ and $q_2$ under the exponential map is a geodesic loop based at $q$. It is called the sliding of $\gamma$. When $q$ moves along a curve $\alpha$, the sliding of $\gamma$ becomes a 1-parameter family of curves. It is called the sliding of $\gamma$ along the curve $\alpha$.

When we parallel transport any vector $v$ along the geodesic loop, we will get another vector $\mathrm{hol}(v)$. $\mathrm{hol}:T_p\rightarrow T_p$ is called the holonomy of the loop. For hyperk\"ahler manifold, $\mathrm{hol}\in\mathrm{Sp}(1)=\mathrm{SU}(2)$. Under suitable orthonormal basis, any element in $\mathrm{SU}(2)$ can be written as
 \[
 \mathbf{A}=
 \left( {\begin{array}{*{20}c}
    e^{i\theta} & 0  \\
    0 & e^{-i\theta}   \\
 \end{array}} \right)
 .
 \]
 So
 \[
 \mathbf{A}-\mathbf{Id}=
 \left( {\begin{array}{*{20}c}
    e^{i\theta}-1 & 0  \\
    0 & e^{-i\theta}-1   \\
 \end{array}} \right),
 (\mathbf{A}-\mathbf{Id})
 \left( {\begin{array}{*{20}c}
    v_1  \\
    v_2  \\
 \end{array}} \right)
 =
 \left( {\begin{array}{*{20}c}
    (e^{i\theta}-1)v_1  \\
    (e^{-i\theta}-1)v_2  \\
 \end{array}} \right).
 \]
 So $|(\mathbf{A}-\mathbf{Id})\mathbf{v}|=|\mathbf{A}-\mathbf{Id}||\mathbf{v}|$ if we define the norm by $$|\mathbf{A}-\mathbf{Id}|=|e^{i\theta}-1|=|e^{-i\theta}-1|.$$
 This property is also a special property of $\mathrm{SU}(2)$. For instance $\mathrm{SO}(4)$ does not have this property.

 In the flat case, local covering transforms are all linear maps. Suppose $T_1(\mathbf{x})=\mathbf{ax}+\mathbf{b}$, $T_2(\mathbf{x})=\mathbf{Ax}+\mathbf{B}$ are two local covering transforms, where $\mathbf{a},\mathbf{A}\in\mathrm{SO}(n)$  and $ \mathbf{b},\mathbf{B}\in\mathbb{R}^n$.
 They correspond to two geodesic loops $\gamma_1, \gamma_2$ with same base point $p$. $\mathbf{A}$, $\mathbf{a}$ are exactly the holonomy of $\gamma_1$ and $\gamma_2$ while $|\mathbf{B}|$, $|\mathbf{b}|$ are the same as the length of loops $\gamma_1$ and $\gamma_2$ respectively.
 $$T_1\circ T_2(\mathbf{x})=\mathbf{a}(\mathbf{Ax}+\mathbf{B})+\mathbf{b}=\mathbf{aAx}+\mathbf{aB}+\mathbf{b}$$ will correspond to the Gromov product of $\gamma_1$ and $\gamma_2$.
 So $$T_1^{-1}T_2^{-1}T_1T_2(\mathbf{x})=\mathbf{a}^{-1}\mathbf{A}^{-1}\mathbf{aAx}+
 \mathbf{a}^{-1}\mathbf{A}^{-1}((\mathbf{a}-\mathbf{Id})\mathbf{B}+(\mathbf{Id}-\mathbf{A})\mathbf{b}).$$
 The Lie algebra are also linear maps.
 Taking the derivative in the above expression of the commutator at the origin $T_1(\mathbf{x})=T_2(\mathbf{x})=\mathrm{Id}(\mathbf{x})=\mathrm{Id}(\mathbf{x})+\mathbf{0}$, the Lie bracket is
 $$[\mathbf{ax}+\mathbf{b},\mathbf{Ax}+\mathbf{B}]=[\mathbf{a},\mathbf{A}]\mathbf{x}+(\mathbf{aB}-\mathbf{Ab}).$$

 In general case, we can understand the covering transform in the following way: We start from $q_0$ in $B_1(0)\subset T_p(M)$. Then exponential map at $p$ maps the point $p_1\in B_1(0)$ to $p\in M$. The derivative maps the tangent vector at $p_1$ to the tangent vector at $p$. Let $\tilde{A}$ be the inverse of the map. Then $F(q_0)=\exp_{p_1}(\tilde{A}q_0)$. In the Ricci flat case, by Lemma \ref{local-geometry}, $g_{ij}$ as well as its $m$-th derivatives are bounded by $C(m,n)\Lambda^2$. So the Christoffel symbols are also bounded as well as their higher derivatives. By the property of ODE, all the parallel transports and the geodesic equations have the same kind of bound as well as their higher derivatives. In particular, the difference between $\tilde{A}$ and the holonomy $A$ of the geodesic loop is bounded by $C(n)\Lambda^2$. The difference between $F(q_0)$ and $p_1+\tilde{A}q_0$ is bounded by $C(n)\Lambda^2$.
 In conclusion, the difference between $F(q_0)$ and $p_1+Aq_0$ is bounded by $C(n)\Lambda^2$ while the difference between their higher derivatives is bound by $C(m,n)\Lambda^2$.

 From now on, we are back to the gravitational instanton $M$ with the point $o$. We will rescale the ball $B_{\mathrm{dist}(o,p)/2}(p)$ to a ball with radius 1 and apply the theory in this section. In particular, the metric on the local covering space is $\delta_{ij}+O'(r^{-\epsilon})$. The difference between the local covering transform with the linear map given by the length, direction, and the holonomy of the geodesic loop is $O'(r^{1-\epsilon})$.

 For short loops, there is a better control given by Buser and Karcher as Proposition 2.3.1 in \cite{BuserKarcher}. They proved that the rotation(i.e. holonomy) part of the Gromov's product of $\gamma_1$ and $\gamma_2$ is given by the calculation in the flat case with error bounded by $Cr^{-2-\epsilon}L(\gamma_1)L(\gamma_2)$, while the error of the translation(i.e. length) part is bounded by $Cr^{-2-\epsilon}L(\gamma_1)L(\gamma_2)(L(\gamma_1)+L(\gamma_2))$.

 \subsection{Control of holonomy of geodesic loops}

 In this section, we will use ODE comparison to study the sliding of geodesic loops and the variation of the induced holonomy.
 First let us recall a well known Jacobi equation
 \[
  J''(t)=(\frac{t}{2})^{-2-\epsilon}J(t).
 \]
 satisfying the following property:
 \begin{proposition} ( c.f. Theorem C of \cite{GreeneWu})Let $J$ be the solution of the Jacobi equation with
 \[
 J(2)=0,\;\;J'(2)=1.
 \]

 Then
 $$1\le J'(t)\nearrow J'(\infty)(:=\lim_{t\rightarrow\infty}J'(t))\le \exp\int_2^{\infty}(t-2)(\frac{t}{2})^{-2-\epsilon}\mathrm{d}t<\infty$$
 and $$t-2\le J(t) \le J'(\infty) (t-2).$$
 \end{proposition}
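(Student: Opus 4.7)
My plan is to derive everything from the sign of $f(t):=(t/2)^{-2-\epsilon}$, which is non-negative, so the ODE $J''=fJ$ forces convexity wherever $J\ge 0$.

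First I would establish the lower bounds. Since $J(2)=0$ and $J'(2)=1>0$, the solution $J$ is strictly positive just to the right of $2$. I would show by a continuity argument that $J>0$ on all of $(2,\infty)$: if $t_0$ were the first zero to the right of $2$, then on $[2,t_0]$ we have $J''=fJ\ge 0$, so $J'$ is non-decreasing, hence $J'\ge 1$ on $[2,t_0]$, which gives $J(t_0)\ge t_0-2>0$, a contradiction. With positivity of $J$ established globally, the same reasoning yields $J'(t)\ge 1$ for all $t\ge 2$, which in turn gives $J(t)\ge t-2$ by integration. In particular $J'$ is monotone non-decreasing, so $J'(\infty)=\lim_{t\to\infty}J'(t)$ exists in $(0,\infty]$, and the monotonicity also yields the crucial auxiliary bound $J(t)=\int_2^t J'(s)\,ds\le (t-2)J'(t)$.

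Next I would obtain the upper bounds by a Gronwall-type argument. Integrating the ODE twice and using the auxiliary bound above,
\[
J'(t)=1+\int_2^t f(s)J(s)\,ds\le 1+\int_2^t (s-2)f(s)J'(s)\,ds.
\]
Since $(s-2)f(s)=(s-2)(s/2)^{-2-\epsilon}$ is non-negative and integrable on $[2,\infty)$ (it behaves like $s^{-1-\epsilon}$ as $s\to\infty$), Gronwall's inequality gives
\[
J'(t)\le \exp\!\int_2^t (s-2)(s/2)^{-2-\epsilon}\,ds,
\]
and letting $t\to\infty$ produces the claimed upper bound for $J'(\infty)$, which is finite precisely because the integral converges. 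The companion bound $J(t)\le J'(\infty)(t-2)$ then drops out of $J(t)=\int_2^t J'(s)\,ds\le (t-2)J'(\infty)$.

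There is no real obstacle in this argument; the only subtlety worth flagging is that the Gronwall step needs the monotonicity of $J'$ to replace $J(s)$ by $(s-2)J'(s)$, so the lower-bound phase (positivity of $J$ and $J''\ge 0$) must be completed before the upper-bound phase is attempted. Everything else is a direct manipulation of the linear ODE.
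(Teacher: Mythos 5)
Your argument is correct. The paper itself gives no proof of this proposition---it is stated with a citation to Theorem C of Greene--Wu---so there is no internal argument to compare against; your self-contained ODE proof is a legitimate replacement. The structure is the right one: the non-negativity of $f(t)=(t/2)^{-2-\epsilon}$ makes $J$ convex wherever $J\ge 0$, and the continuation argument correctly rules out a first zero of $J$ past $t=2$, yielding $J>0$, hence $J''\ge 0$, hence $J'\nearrow$ and $J'\ge 1$, hence $J(t)\ge t-2$ and $J(t)\le (t-2)J'(t)$. The Gronwall step is applied correctly: $J'(t)\le 1+\int_2^t(s-2)f(s)J'(s)\,ds$ with non-negative kernel gives $J'(t)\le\exp\int_2^t(s-2)f(s)\,ds$, and since $(s-2)(s/2)^{-2-\epsilon}\sim s^{-1-\epsilon}$ is integrable the limit is finite; the bound $J(t)\le J'(\infty)(t-2)$ then follows by integrating $J'\le J'(\infty)$. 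One small stylistic remark: once you know $J'\ge 1>0$ you can bypass Gronwall entirely by writing $(\ln J')'=J''/J'\le (t-2)f(t)$ and integrating, which gives the same exponential bound with slightly less machinery. You also correctly flag the only ordering subtlety, namely that the monotonicity of $J'$ (and hence $J(s)\le(s-2)J'(s)$) must be established before it is fed into the upper-bound estimate.
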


 Suppose $\gamma$ is a geodesic loop based at $p \in M$, $\alpha$ is an arc-length parameterized curve passing through $p$. Suppose $r =\mathrm{dist}(0,p) = r(p) > 3.\;$ As discussed before, we slide $\gamma$ along $\alpha$ and get a 1-parameter family of geodesic loops $\gamma_t$ based at $\alpha(t)$. Then their length and induced holonomy satisfy the following:

 \begin{proposition}
 Suppose the length and the holonomy of the geodesic loop $\gamma_t$ are $L(t)$ and $\mathrm{hol}(t)$, respectively. Then,
 \[
    |L'(t)| \leq |\mathrm{hol}(t)-\mathrm{Id}|
 \]
 and
 \[
 |\mathrm{hol}(t)-\mathrm{Id}|' \leq L(t) \cdot \displaystyle \max_{x \in \gamma_t} |\mathrm{Rm}|(x).
 \]
 \label{fundamental-equation}
 \end{proposition}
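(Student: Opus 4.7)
The plan is to prove the two inequalities separately, using the first variation of arc length for the length bound and the parallel-transport/curvature commutator for the holonomy bound.

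For the length inequality, I would parameterize each sliding loop $\gamma_t$ by arc length on $[0, L(t)]$ so that $\gamma_t(0) = \gamma_t(L(t)) = \alpha(t)$, with unit tangent $T_t$. Because each $\gamma_t$ is a geodesic, the first variation of arc length reduces to its boundary terms,
\[
L'(t) = \langle \alpha'(t),\, T_t(L(t)) - T_t(0)\rangle.
\]
Since $T_t$ is parallel along the geodesic $\gamma_t$, one has $T_t(L(t)) = \mathrm{hol}(t)\, T_t(0)$, so
\[
L'(t) = \langle \alpha'(t),\, (\mathrm{hol}(t) - \mathrm{Id})\, T_t(0)\rangle.
\]
Cauchy--Schwarz together with the $\mathrm{SU}(2)$ identity $|(\mathbf{A}-\mathbf{Id})\mathbf{v}| = |\mathbf{A}-\mathbf{Id}|\,|\mathbf{v}|$ emphasized earlier, and the unit norms $|\alpha'(t)| = |T_t(0)| = 1$, will yield $|L'(t)| \leq |\mathrm{hol}(t) - \mathrm{Id}|$ at once.

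For the holonomy inequality, I would introduce the parallel-transport operator $P_t(s) \colon T_{\alpha(t)}M \to T_{\gamma_t(s)}M$ along $\gamma_t|_{[0,s]}$, so that $\mathrm{hol}(t) = P_t(L(t))$, and let $V_t(s) = \partial_t \gamma_t(s)$ be the sliding variation field, which is a Jacobi field along $\gamma_t$ with $V_t(0) = V_t(L(t)) = \alpha'(t)$. Applying the commutator identity $[\nabla_t, \nabla_s] = R(V_t, T_t)$ to $\nabla_s P_t = 0$ gives the ODE $\nabla_s(\nabla_t P_t) = -R(V_t, T_t)\, P_t$. Integrating in $s$ from $0$ to $L(t)$, and separately handling the endpoint contribution coming from $L'(t)$ (already controlled by the first inequality), produces
\[
|\mathrm{hol}'(t)| \leq \max_{x \in \gamma_t} |\mathrm{Rm}|(x) \cdot \int_0^{L(t)} |V_t(s)|\, ds.
\]
Since $|\mathrm{hol}(t) - \mathrm{Id}|' \leq |\mathrm{hol}'(t)|$, the claim will follow once $|V_t| \leq 1$ pointwise along $\gamma_t$.

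The main obstacle I anticipate is precisely this pointwise bound on the sliding field. The natural route is to work in the local covering space of the previous subsection, where the pulled-back metric is $\delta_{ij} + O'(r^{-\epsilon})$ and the covering transform $F$ associated to $\gamma$ is a Euclidean isometry up to such error. In the exact flat model, sliding by $\alpha'(t)$ makes $V_t(s)$ the linear interpolation $(1 - s/L)\,\alpha'(t) + (s/L)\, F_* \alpha'(t)$ between two unit vectors, which has $|V_t| \leq 1$ by convexity and $|F_*| = 1$. In the curved case the Buser--Karcher estimates recalled at the end of the previous subsection introduce only an $O'(r^{-\epsilon})$ correction, which is cleanly absorbed into the curvature factor already on the right-hand side, preserving the stated inequality.
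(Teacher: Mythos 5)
Your argument follows the same two steps as the paper's proof: the first variation of arc length for $|L'|\le|\mathrm{hol}-\mathrm{Id}|$, and integrating the curvature commutator $\nabla_{\partial_s}\nabla_{\partial_t}V=R(\partial_s,\partial_t)V$ along a parallel frame for the holonomy bound. One point of bookkeeping: the paper avoids the moving-domain issue you flag by parameterizing each loop on the fixed interval $[0,1]$ (so $|\partial_s|=L(t)$, $\partial_t(0)=\partial_t(1)=\alpha'(t)$), under which $L'(t)=\tfrac{1}{L}\langle\partial_t,\partial_s\rangle\big|_{0}^{1}$ falls out with no boundary correction; if you insist on arc-length parameterization you must reparameterize back to $[0,1]$ before applying the first-variation formula, as you implicitly do. Your worry about the pointwise bound $|V_t|\le 1$ on the sliding field is well founded and is in fact suppressed in the paper's one-line estimate $\int_0^1|R(\partial_s,\partial_t)V|\le\max|\mathrm{Rm}|\,L$: since $\partial_t$ is a Jacobi field with unit boundary data at both ends of a short geodesic, the bound holds only up to a multiplicative $1+O(|\mathrm{Rm}|\,L^2)$ error, which your covering-space convexity argument makes precise and which is harmless in the ODE comparison of Theorem~\ref{hol-control}.
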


 \begin{proof}
 Let $\gamma(s,t)=\gamma_t(s)$, then $\gamma(0,t)=\gamma(1,t)=\alpha(t)$ and for any fixed $t$, $\gamma(s,t)$ is a geodesic.  So $\partial_s:=\gamma_*(\frac{\partial}{\partial s})$ and $\partial_t:=\gamma_*(\frac{\partial}{\partial t})$ satisfy
 $$\nabla_{\partial_s}\partial_s=0,[\partial_s,\partial_t]=\nabla_{\partial_s}\partial_t-\nabla_{\partial_t}\partial_s=0
 ,L(t)=\int_0^1|\partial_s|\mathrm{d}s$$
 Then,
 $$\begin{array}{lcl} \frac{\mathrm{d}L(t)}{\mathrm{d}t}|_{t=t_0} & = & \int_0^1\frac{<\nabla_{\partial_t}\partial_s,\partial_s>} {<\partial_s,\partial_s>^{1/2}}\mathrm{d}s\\ & = & \frac{1}{L(t_0)}\int_0^1<\nabla_{\partial_s}\partial_t,\partial_s>\mathrm{d}s\\ &= & \frac{1}{L(t_0)}\int_0^1\nabla_{\partial_s}<\partial_t,\partial_s>\mathrm{d}s \\
& = & \frac{<\partial_t,\partial_s>|^{s=1}_{s=0}}{L(t_0)}=<\alpha'(t_0),\frac{(\mathrm{hol-Id})[\partial_s(0,t_0)]}{L(t_0)}>.\end{array} $$
 So $$|L'|\le|\mathrm{hol-Id}|.$$

 Moreover, given any unit length vector $V$ at $\gamma(0,t_0)$, we can parallel transport it along $\alpha(t)=\gamma(0,t)$ and then parallel transport it along $\gamma_t.\;$  Then $\mathrm{hol}(V(0,t))=V(1,t)$.
 So $$
 \begin{array}{lcl} ||\mathrm{hol-Id}|'|&\le & |\nabla_{\partial_t}V(1,t)|  \le  \int_0^1|\nabla_{\partial_s}\nabla_{\partial_t}V|\\
 &
 = & \int_0^1|R(\partial_s,\partial_t)V(s,t)|\le\displaystyle\max_{x\in\gamma_t}|\mathrm{Rm}|L.\end{array} $$

 \end{proof}

 \begin{theorem}
 For any geodesic loop based at $p$ with $r=r(p)=d(p,o)>3$ and length $L\le C_1r$, the holonomy along the loop satisfies
 $$|\mathrm{hol-Id}|\le\frac{J'(r)}{J(r)}L\le C_2 \frac{L}{r}.$$
 Here the constant $$C_1=\frac{1}{2}\inf_{t>2}\frac{t}{J(t)}\inf_{t>3}\frac{J(t)}{t}, C_2=\sup_{t>3}J'(t)\sup_{t>3}\frac{t}{J(t)}.$$
 \label{hol-control}
 \end{theorem}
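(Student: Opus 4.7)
I plan to slide the loop $\gamma$ inward toward $o$ along a minimizing geodesic $\alpha\colon[0,r-2]\to M$, parameterize the slid loops $\gamma_t$ by $t := d(o,\alpha(r-t)) \in [2,r]$, and compare the resulting length $L(t)$ and holonomy magnitude $f(t) := |\mathrm{hol}(t)-\mathrm{Id}|$ with the Jacobi function $J$ of the previous proposition, where $K(t) := (t/2)^{-2-\epsilon}$. Proposition~\ref{fundamental-equation} yields
\[
|L'(t)| \le f(t), \qquad |f'(t)| \le K(t)\,L(t),
\]
provided the slide is defined and each $\gamma_t$ lies inside $\{d(o,\cdot) \ge t/2\}$ so that the curvature decay hypothesis really gives the bound on $|\mathrm{Rm}|$ over the whole loop. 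Both conditions should follow by a continuity/bootstrap argument from $L(r) \le C_1 r$ and the quantitative choice of $C_1$.

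The heart of the argument is a Sturm--Picone comparison. Combining the two inequalities gives $|L''(t)| \le K(t)\,L(t)$, so in particular $L''(t) - K(t)\,L(t) \le 0$. The Wronskian
\[
W(t) := L'(t)\,J(t) - L(t)\,J'(t)
\]
then satisfies $W'(t) = (L''(t) - K(t)\,L(t))\,J(t) \le 0$, hence $W$ is non-increasing on $[2,r]$. Using $J(2) = 0$ and $J'(2) = 1$, we find $W(2) = -L(2) \le 0$, so $W(r) \le 0$, i.e.\
\[
\frac{L'(r)}{L(r)} \le \frac{J'(r)}{J(r)}.
\]
To upgrade this to the desired bound on $f(r)$, I would exploit the explicit formula $L'(r) = \langle \alpha'(0), v\rangle$ extracted from the proof of Proposition~\ref{fundamental-equation}, where $v = (\mathrm{hol}-\mathrm{Id})[\partial_s]/L(r)$ has $|v| = f(r)$ thanks to the $\mathrm{SU}(2)$ isometry property recalled in Section~3.1. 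Running the Sturm--Picone argument for every admissible initial slide direction $\alpha'(0)$ and taking the supremum produces $f(r) \le (J'(r)/J(r))\,L(r)$. The second inequality is purely algebraic: $J'(r) \le \sup_{t>3} J'(t)$ and $r/J(r) \le \sup_{t>3}(t/J(t))$ combine with the definition of $C_2$ to give $J'(r)/J(r) \le C_2/r$.

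The main obstacle I foresee is the upgrade from $L'(r)$ to $f(r)$: only one initial slide direction can be prescribed at a time, while the direction maximizing $L'(r)$ is $v/|v|$, which in general does not point radially inward toward $o$. I expect this to be handled either by allowing a non-radial slide and reparametrizing by $d(o,\cdot)$ along the resulting curve (which introduces a harmless $dt/ds$ factor into the Sturm--Picone step) or by an $\mathrm{SU}(2)$-symmetry argument in $T_pM$ that exploits the hyperk\"ahler structure mentioned in the introduction. A secondary difficulty is closing the bootstrap in the first step, since the estimate $L(t) \le t/2$ needed to run the ODE inequalities is itself a consequence of the Sturm--Picone conclusion; this is a classical open--closed argument whose success relies on carefully matching the a posteriori bound to the quantitative definition of $C_1$.
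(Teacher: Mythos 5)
The key step of your Sturm--Picone comparison does not go through as written: from $|L'(t)|\le f(t)$ and $|f'(t)|\le K(t)L(t)$ you cannot deduce $|L''(t)|\le K(t)L(t)$. The function $L'$ is only pinned inside the band $[-f,f]$; it may oscillate within that band, and nothing controls its derivative. (Take $K\equiv 0$, $f\equiv 1$, $L(t)=\varepsilon\sin(t/\varepsilon)$: the two hypotheses hold, yet $|L''|$ is of order $\varepsilon^{-1}$.) A genuine second-order inequality for $L$ only appears when $L'(t)=f(t)$ holds \emph{identically} in $t$, in which case $L''=f'$ and the bound $|f'|\le KL$ becomes a bound on $L''$. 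This is exactly what the paper arranges: it slides along the ``gradient'' direction $\partial_t=\frac{\mathrm{hol}-\mathrm{Id}}{|\mathrm{hol}-\mathrm{Id}|}\,\frac{\partial_s}{L}$ (well-defined by the $\mathrm{SU}(2)$ isometry property you cite), so that $L'(t)=|\mathrm{hol}-\mathrm{Id}|(t)$ at every $t$, not just at the endpoint. You treat the choice of slide direction as a minor issue to be fixed at the very end (``the upgrade from $L'(r)$ to $f(r)$''), but it is needed throughout, because without it the Wronskian inequality $W'\le 0$ simply is not available.

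This same choice also dissolves your ``upgrade'' worry automatically: once $L'(r)=f(r)$ by construction, the conclusion $L'(r)J(r)-J'(r)L(r)\le 0$ is literally $f(r)\le \tfrac{J'(r)}{J(r)}L(r)$, and there is nothing to take a supremum over. Concerning the issue that the gradient direction is not radial: the paper does not reparametrize by $d(o,\cdot)$ (which could indeed fail to be monotone, making your proposed ``harmless $dt/ds$ factor'' problematic). Instead it keeps arc-length as the slide parameter and uses only the triangle-inequality lower bound $d(o,\alpha(t))\ge t$ when sliding inward from $\alpha(r)=p$, which suffices to compare with the Jacobi ODE $J''=(t/2)^{-2-\epsilon}J$. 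Finally, the bootstrap you identify is resolved in the paper not by an open--closed argument but by a proof by contradiction: one assumes $f(r)>\tfrac{J'(r)}{J(r)}L(r)$, defines $t_1$ as the first time one of $L(t)=t/2$, $f(t)=0$, $L(t)=0$, or $t=2$ occurs while sliding inward, and shows that the Wronskian monotonicity both forces $t_1=2$ and then yields the absurd inequality $L(2)<0$. This is worth internalizing: the inequality $L(t)<t/2$, needed to justify the curvature bound, is \emph{obtained inside} the contradiction argument rather than imported as an a priori hypothesis, which is precisely why the quantitative constant $C_1$ is chosen as it is.
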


 \begin{proof}
 If we choose $\alpha(t)$ so that $\partial_t=\frac{\mathrm{hol-Id}}{|\mathrm{hol-Id}|}\frac{\partial_s}{L(t_0)}$, we can get $L'(t)=|\mathrm{hol-Id}|$. It is some kind of gradient flow.
 The other fundamental equation is that $|\mathrm{hol-Id}|'$ is bounded by the product of $L$ and the maximal Riemannian curvature along the geodesic loop.

 Given $p$ whose distance to origin $r=r(p)=d(p,o)>3$ and any geodesic loop based at $p$ with length smaller than $C_1r<\frac{r}{2}$, if $|\mathrm{hol-Id}|>\frac{J'(r)}{J(r)}L$, we can slide the curve back along the gradient flow. In other words, we start from $\alpha(r)=p$ and get a curve $\alpha:[t_1,r]\rightarrow M$ as well as the corresponding $\gamma_t$. Let $t_1$ be the biggest $t_1$ such that one of the following happens: (1) $L(t_1)=t_1/2$; (2) $L'(t_1)=|\mathrm{hol-Id}|=0$ or $L(t_1)=0$; (3) $t_1=2$. Then when $t\in (t_1,r)$, we have $0<L(t)<t/2$  and $t>t_1\ge 2$. So the distance to the origin is at least $t-L(t)>t/2$. The curvature is bounded by $(t/2)^{-2-\epsilon}$ and the conjugate radius is at least $\pi(\frac{t}{2})^{1+\epsilon/2}>\frac{t}{2}>L(t)$. So the geodesic loop can exist without going out of the conjugate radius.
 Combining two fundamental equations together,we have
 $$L''(t)\le L(t)\mathrm{max|Rm|}\le L(t)(t-L(t))^{-2-\epsilon}< L(t)(\frac{t}{2})^{-2-\epsilon},\forall t\in (t_1,r).$$
 Therefore $(L'J-J'L)'=L''J-J''L< 0$. By our hypothesis $L'(r)>\frac{J'(r)}{J(r)}L(r)$. So $L'(t)J(t)-J'(t)L(t)>0\Rightarrow(\frac{L(t)}{J(t)})'>0\Rightarrow \frac{L(t)}{J(t)}<\frac{L(r)}{J(r)},\forall t\in [t_1,r)$.
 So $L(t_1)<\frac{L(r)}{J(r)}J(t_1)\le C_1\frac{r}{J(r)}\frac{J(t_1)}{t_1}t_1\le\frac{t_1}{2}$ and $L'(t_1)J(t_1)>J'(t_1)L(t_1)\ge 0$. In other words, $t_1=2$. But then $L(2)<\frac{L(r)}{J(r)}J(2)=0$. It is a contradiction.
 \end{proof}

 For any fixed geodesic ray $\alpha$ starting from $o$, any number $r>3$ and any geodesic loop $\gamma$ based at $p=\alpha(r)$ with length $L\le C_1r$, when we slide it along the ray towards infinity, it will always exist i.e. stay within the conjugate radius. This follows from the following rough estimate:
 \begin{corollary}
  The length $L(t)$ of the geodesic loop based at $\alpha(t)$ is smaller than $t/2$ for all $t\ge r$.
 \end{corollary}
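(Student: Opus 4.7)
The plan is a forward bootstrap along $\alpha$. Set $t_2 = \sup\{T \ge r : L(t) < t/2 \text{ on } [r, T]\}$. A direct estimate from the definition of $C_1$ gives $C_1 \le 1/6 < 1/2$, so $L(r) \le C_1 r < r/2$ and hence $t_2 > r$; the aim is to rule out $t_2 < \infty$. On $[r, t_2)$ the bound $L(t) < t/2$ keeps every point of $\gamma_t$ at distance $\ge t/2$ from $o$, where $|\mathrm{Rm}| \le (t/2)^{-2-\epsilon}$ and the conjugate radius exceeds $\pi(t/2)^{1+\epsilon/2} > L(t)$, so the sliding is legitimate and Proposition \ref{fundamental-equation} delivers the pointwise bounds $|L'(t)| \le H(t)$ and $|H'(t)| \le L(t)(t/2)^{-2-\epsilon}$, where $H(t) := |\mathrm{hol}(t) - \mathrm{Id}|$.

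Integrating these twice and applying Fubini produces the Volterra comparison
\[L(t) \le L(r) + H(r)(t-r) + \int_r^t L(u)(t-u)(u/2)^{-2-\epsilon}\, du.\]
Theorem \ref{hol-control} applies at the initial time $t=r$ (since $L(r) \le C_1 r$) and gives $H(r) \le (J'(r)/J(r)) L(r)$. The comparison function $\tilde J(t) := (L(r)/J(r)) J(t)$ matches the initial data $\tilde J(r) = L(r)$, $\tilde J'(r) = L(r)J'(r)/J(r) \ge H(r)$, and by variation of constants satisfies the same integral identity with equality in place of $\le$. Subtracting gives $\phi := L - \tilde J$ with $\phi(r) = 0$ and $\phi(t) \le \int_r^t \phi(u)(t-u)(u/2)^{-2-\epsilon}\, du$; a standard nonnegative Volterra--Gronwall estimate applied to $\max(\phi, 0)$ forces $\phi \le 0$, that is, $L(t) \le \tilde J(t) \le C_1 r\, J(t)/J(r)$ on $[r, t_2)$.

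It then suffices to verify $C_1 r\, J(t)/J(r) < t/2$ strictly for all $t \ge r > 3$, equivalently $2C_1\,(J(t)/t) < J(r)/r$. A short computation from $J'' = (t/2)^{-2-\epsilon} J$ with $J(2)=0$, $J'(2)=1$ (via $g(t) := tJ'(t)-J(t)$, which satisfies $g(2) = 2 > 0$ and $g'(t) = tJ''(t) > 0$) shows $J(t)/t$ is strictly increasing on $(2,\infty)$ with limit $J'(\infty)$. Plugging in the definition of $C_1$ gives $2C_1 J'(\infty) = J(3)/3$, while strict monotonicity at $r > 3$ yields $J(r)/r > J(3)/3$ and $J(t)/t < J'(\infty)$ for any finite $t$. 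Chaining these strict inequalities gives $L(t) < t/2$ on $[r, t_2)$; by continuity $L(t_2) < t_2/2$, contradicting the maximality of $t_2$, so $t_2 = \infty$. The main subtlety --- the reason one cannot simply repeat the Sturm-type ODE argument in the proof of Theorem \ref{hol-control} --- is that $\alpha$ is a fixed ray rather than the gradient-flow curve $\partial_t = H^{-1}(\mathrm{hol}-\mathrm{Id})\partial_s/L$, so $L'$ need not equal $H$ and one cannot differentiate to obtain $L'' \le LM$; the Volterra/Gronwall step is precisely what replaces that Sturm comparison.
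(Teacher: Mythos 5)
Your proof is correct, and it takes a genuinely different route from the paper's. The paper's proof is a one-liner: it applies Theorem \ref{hol-control} at \emph{each} time $t\ge r$ to obtain the pointwise bound $H(t)\le \tfrac{J'(t)}{J(t)}L(t)$, combines with $L'\le H$ to get $L'\le (J'/J)L$, and concludes by a first-order Gronwall inequality $(\ln L)'\le(\ln J)'$. You instead invoke Theorem \ref{hol-control} only at the initial time $t=r$, and propagate forward by a second-order Volterra integral inequality obtained from $|L'|\le H$ and $|H'|\le L(t/2)^{-2-\epsilon}$, comparing against $\tilde J=(L(r)/J(r))J$. Both arrive at the same bound $L(t)\le\frac{L(r)}{J(r)}J(t)$, and your verification that this is strictly less than $t/2$ via $2C_1J'(\infty)=J(3)/3$ and the monotonicity of $J(t)/t$ is exactly the inequality implicit in the paper's last step. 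One genuine advantage of your route: the paper's repeated invocation of Theorem \ref{hol-control} at intermediate $t$ requires $L(t)\le C_1 t$ there, but the Gronwall output only gives $L(t)\le t/2$ with $1/2>C_1$; making the paper's bootstrap close requires either widening the hypothesis of Theorem \ref{hol-control} or noting that its backward-sliding proof still goes through whenever $L(t)J(s)/J(t)\le s/2$ on $[2,t]$ (which the Gronwall bound does supply). Your Volterra argument sidesteps that entirely since the only input needed from Theorem \ref{hol-control} is the seed inequality $H(r)\le(J'(r)/J(r))L(r)$, which holds directly from the stated hypothesis $L(r)\le C_1 r$. Your closing remark slightly misdescribes what you are replacing: the paper does not rerun the Sturm comparison from Theorem \ref{hol-control} (which indeed would need $L'=H$), it uses a first-order Gronwall; but the underlying point---that you cannot control $L''$ directly along a fixed ray---is correct and is the reason your second-order argument must pass through the integral (Volterra) form rather than the differential one.
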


 \begin{proof}
 By Proposition \ref{fundamental-equation} and Theorem \ref{hol-control}, we know that
 $L'(t)\le \frac{J'(t)}{J(t)}L(t).$
 So $$(\ln L)'\le (\ln J)'\Rightarrow L(t)\le \frac{L(r)}{J(r)}J(t)\le \frac{t}{2},\forall t\ge r>3.$$
 \end{proof}
 We will derive a better estimate and use it to prove the first main theorem.
\subsection{Classification of tangent cone at infinity}
 To under how the length of geodesic loops varies, we first need to understand the structure at infinity. Our assumption of the decay of the curvature means that we are at a manifold with asymptotically nonnegative curvature.  The end of such a manifold is well studied and goes back to Kasue \cite{Kasue}. Here, a complete connected noncompact Riemannian manifold $M$ with a base point $o$ is called asymptotically nonnegative curved if there exists a monotone nonincreasing function $k:[0,\infty)\rightarrow[0,\infty)$ such that the integral $\int_0^{\infty}tk(t)\mathrm{d}t$ is finite and the sectional curvature of $M$ at any point $p$ is bounded from below by $-k(\mathrm{dist}(o,p))$. Of course, the gravitational instanton $M$ satisfies this condition.

 \begin{theorem}
 (\cite{Kasue} \cite{Drees} \cite{MashikoNaganoOtsuka})
Let $M$ be a manifold with asymptotically nonnegative curvature. Two rays $\sigma$ and $\gamma$ starting from $o$ are called equivalent if $\lim_{t\rightarrow \infty}\mathrm{dist}(\sigma(t),\gamma(t))/t=0$. Denote the set of equivalent classes of geodesic rays starting from $o$ by $S(\infty)$. Then there exists a metric $\delta_{\infty}$ on $S(\infty)$ such that $(S(\infty),\delta_{\infty})$ forms a compact inner metric space, in other words, length space. Consider the cone $C(S(\infty))$ over $S(\infty)$ with the natural distance $$\Delta_{\infty}((t,p),(t',p'))=\sqrt{t^2+t'^2-2tt'\cos(\min\{\pi,\delta_{\infty}(p,p')\})}.$$ Fix the representative $\sigma$ from each equivalent class $[\sigma]$. Define the map $\Phi_t: \{r\in[a,b]\}\cap C(S(\infty))\rightarrow \{r\in[at,bt]\}\cap M$ by $\Phi_t(r,[\sigma])=\sigma(rt)$ for any fixed $0<a<b<\infty$ and any $t>0$, . Then the Gromov-Hausdorff distance between $(\{r\in[a,b]\}\cap C(S(\infty)),\Delta_\infty)$ and $(\{r\in[at,bt]\}\cap M,\mathrm{dist}/t)$ using the map $\Phi_t$ converges to 0 when $t$ goes to infinity. In other words, the tangent cone at infinity is unique and must be a metric cone $C(S(\infty))$.
\label{uniqueness-of-tangent-cone}
 \end{theorem}
 
 \begin{remark}
 Drees \cite{Drees} pointed out a gap in \cite{Kasue}. It was corrected by Mashiko, Nagano and Otsuka \cite{MashikoNaganoOtsuka}.
 \end{remark}

The following additional thing is true for gravitational instantons:

 \begin{theorem}
 (ALH-splitting) If the $S(\infty)$ of a gravitational instanton $M$ has more than one connected components, $M$ must be isometric to the product of $\mathbb{R}$ and a flat 3-torus.
 \label{one-end}
 \end{theorem}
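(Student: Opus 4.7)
The plan is to use the multiple components of $S(\infty)$ to produce a line in $M$, apply the Cheeger--Gromoll splitting theorem (using Ricci-flatness), and then identify the splitting factor by combining low-dimensional Ricci-flatness with the hyperk\"ahler holonomy constraint.

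First, by Kasue's theorem (Theorem \ref{Kasue}), the connected components of a large sphere $S_t$ correspond bijectively to the connected components of $S(\infty)$, and the intrinsic distance between two components $S_{\alpha,t}$ and $S_{\beta,t}$ (with $\alpha\ne\beta$) in the induced metric $d_t/t$ approximates a positive number as $t\to\infty$. Thus $M$ has at least two distinct ends $\mathcal{E}_1,\mathcal{E}_2$. I would then construct a line in $M$ in the standard way: pick sequences $p_n\in\mathcal{E}_1$, $q_n\in\mathcal{E}_2$ with $r(p_n),r(q_n)\to\infty$, and let $\sigma_n$ be a minimizing unit-speed geodesic from $p_n$ to $q_n$. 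Since $\mathcal{E}_1$ and $\mathcal{E}_2$ lie in different connected components of $M\setminus K$ for some compact $K$, each $\sigma_n$ must cross $K$; reparameterize so that $\sigma_n(0)\in K$ and pass to a subsequence whose basepoints and initial directions converge. The limit geodesic $\sigma:\mathbb{R}\to M$ is minimizing on every compact interval and is therefore a line.

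Next, because $(M,g)$ is hyperk\"ahler it is Ricci-flat, so in particular has nonnegative Ricci curvature. The Cheeger--Gromoll splitting theorem then yields an isometric splitting $M=\mathbb{R}\times N$ with $N$ a complete Ricci-flat $3$-manifold. In dimension three Ricci-flat is equivalent to flat, so $N$ is flat. Moreover $N$ must be \emph{compact}: if $N$ is connected and noncompact, then $\mathbb{R}\times N$ has exactly one end, contradicting that $M$ has at least two ends; so $N$ is closed.

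Finally, I would identify $N$ as a flat $3$-torus via the holonomy. The holonomy group of $M=\mathbb{R}\times N$ fixes the parallel $\mathbb{R}$-direction; but the holonomy of a hyperk\"ahler $4$-manifold lies in $\mathrm{Sp}(1)=\mathrm{SU}(2)$, which acts simply transitively on $S^3\subset\mathbb{R}^4$. Hence any element fixing a nonzero vector is the identity, so the holonomy of $M$, and therefore of $N$, is trivial. A compact flat Riemannian $n$-manifold with trivial holonomy is a flat torus (the deck group of the universal cover $\mathbb{R}^n\to N$ consists of pure translations and must be a rank-$n$ lattice for $N$ compact), giving $N\cong T^3$, as required. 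The main obstacle I anticipate is the line construction itself---one must ensure the minimizing geodesics between points in different ends persist through a common compact region; this is handled by the topological separation of ends outside a compact set, after which the Arzel\`a--Ascoli argument is routine. All subsequent steps (Cheeger--Gromoll, dimension-three flatness, Bieberbach) are black-box citations.
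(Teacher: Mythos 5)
Your proof and the paper's follow the same skeleton: construct a line by taking limits of minimizing geodesics between the two ends, all of which must pass through a fixed compact set; invoke the Cheeger--Gromoll splitting theorem (Ricci-flatness gives nonnegative Ricci) to get $M=\mathbb{R}\times N$; and note that the $3$-dimensional Ricci-flat factor $N$ is flat. Where you diverge from the paper is in the argument that $N$ is a torus. The paper appeals to its Theorem~\ref{hol-control}: a geodesic loop in $N$, slid along the $\mathbb{R}$-factor to arbitrarily large $r$, has constant length but $|\mathrm{hol}-\mathrm{Id}|\le C_2 L/r\to 0$, so the holonomy must be trivial. You instead observe directly that the splitting yields a globally parallel vector field, which the holonomy must fix; since the holonomy of a hyperk\"ahler $4$-manifold lies in $\mathrm{Sp}(1)=\mathrm{SU}(2)$, and this group acts freely on nonzero vectors (equivalently, simply transitively on $S^3$), the holonomy of $M$ and hence of $N$ is trivial. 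Your route is cleaner in the sense that it uses nothing from the geodesic-loop estimates of Section~3.2 and relies only on a standard structural fact about hyperk\"ahler holonomy. You also explicitly establish compactness of $N$ by end-counting (if $N$ were connected noncompact, $\mathbb{R}\times N$ would have a single end, contradicting that $S(\infty)$ is disconnected), a point the paper leaves implicit but which is genuinely needed to rule out the flat, trivial-holonomy possibilities $N=\mathbb{R}^3$, $\mathbb{R}^2\times S^1$, or $\mathbb{R}\times T^2$ before invoking the Bieberbach characterization. Both arguments are correct; yours is marginally more self-contained and slightly more careful on the compactness step.
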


 \begin{proof}
If $S(\infty)$ has more than one connected components, we can find a large enough ball $B_R$ and two sequences $p_i$, $q_i$ such that $\mathrm{dist}(o, p_i)\rightarrow\infty$, $\mathrm{dist}(o, q_i)\rightarrow\infty$, and any minimal geodesics connecting $p_i$ and $q_i$ must pass through $B_R$ for any $i$ large enough. By compactness of $B_R$, the minimal geodesics converge to a line. Notice that $M$ is Ricci-flat, so the splitting theorem \cite{CheegerGromoll} implies that $M$ must be isometric to the product of $\mathbb{R}$ and a 3-manifold. The 3-manifold is also Ricci-flat and therefore flat. Now any geodesic loop in this 3-manifold must have the trivial holonomy by Theorem \ref{hol-control}. So it must be a 3-torus.
 \end{proof}

From now on, we assume that $S(\infty)$ has only one component.

As a corollary, the following is true:

 \begin{corollary}
 Fix a ray $\gamma$ starting from $o$.There is a constant $C_3$ such that for any point $p$ in the large enough sphere $S_{r(p)}$, there is a curve within $B_{1.1r(p)}\setminus B_{0.9r(p)}$ connecting $p$ and $\gamma(r(p))$ with length bounded by $C_3r(p)$.
 \label{connecting-points-with-ray}
 \end{corollary}

There is more information about the tangent cone at infinity of the gravitational instanton $M$.

 \begin{theorem}
 The tangent cone at infinity $C(S(\infty))$ of the gravitational instanton $M$ must be a flat manifold with only possible singularity at origin.
 \label{local-tangent-cone}
 \end{theorem}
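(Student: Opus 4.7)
The plan is to rescale by $t_i \to \infty$ and show that near any non-vertex point of $C(S(\infty))$ the geometry becomes that of a flat Euclidean quotient. Fix $q \in C(S(\infty))$ at distance $R > 0$ from the cone vertex. By Theorem \ref{uniqueness-of-tangent-cone}, $q$ is the pointed Gromov--Hausdorff limit of points $p_i \in (M, t_i^{-2}g)$ with $r(p_i) \sim R t_i$. On the $t_i^{-2}g$-ball of radius $R/2$ about $p_i$, every point lies at $g$-distance at least $R t_i / 2$ from $o$, so
$$
|\mathrm{Rm}|_{t_i^{-2}g} \;=\; t_i^2\, |\mathrm{Rm}|_g \;\le\; (R/2)^{-2-\epsilon}\, t_i^{-\epsilon} \longrightarrow 0.
$$

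Next I would pull back the rescaled metric under $\exp_{p_i}$ to a Euclidean ball in $T_{p_i}M$ and apply Lemma \ref{local-geometry} together with the Ricci-flatness of $M$: the pull-back $\tilde g_i$ satisfies $|D^m(\tilde g_i - \delta)| = O(t_i^{-\epsilon})$ for every $m$, so the local covers converge smoothly to a flat Euclidean ball. The deck transformations arising from short geodesic loops at $p_i$ are isometries of $\tilde g_i$ and, by the estimate at the end of Subsection 3.1, each differs from its linearization $v \mapsto v_0 + A v$ by an $O(t_i^{-\epsilon})$ error. An Arzel\`a--Ascoli argument on the deck subgroup of rescaled translation length $\le R/8$ yields a subsequential limit group $\Gamma_\infty$ of flat isometries of $\mathbb{R}^4$, and $B_{R/8}(q) \subset C(S(\infty))$ is identified with the quotient of a Euclidean ball by $\Gamma_\infty$.

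Finally I would invoke Theorem \ref{hol-control} to control the structure of $\Gamma_\infty$: a loop of rescaled length $\ell$ has rotation part within $C_2 \ell/R$ of the identity, so arbitrarily short generators of $\Gamma_\infty$ converge to pure translations. Hence $\Gamma_\infty$ is an extension of a lattice of translations by a finite rotation group, whose fixed-point set is contained in a linear subspace through the origin of $\mathbb{R}^4$. This fixed set sits over the cone vertex and does not meet $B_{R/8}(q)$, so the $\Gamma_\infty$-action is free there, and the neighborhood of $q$ is a flat Riemannian manifold. Since $q$ was arbitrary, $C(S(\infty))$ is flat outside the apex.

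The hard part will be making the limit group $\Gamma_\infty$ rigorous: verifying subsequential convergence of local deck groups as subgroups of the isometry group of a GH-converging space, and then showing that rotational elements of $\Gamma_\infty$ fix only (the preimage of) the cone vertex. The sharp estimate $|\mathrm{hol} - \mathrm{Id}| \le C_2 L/r$ from Theorem \ref{hol-control} is precisely what decouples translation length from rotation in the limit, making both steps tractable.
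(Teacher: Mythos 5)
Your blowdown picture is the right one and your use of Lemma \ref{local-geometry} and of the holonomy bound in Theorem \ref{hol-control} matches the paper in spirit, but there is a genuine misconception in the middle that would derail the argument. You describe the limit group $\Gamma_\infty$ as ``an extension of a lattice of translations by a finite rotation group.'' That cannot be right in the cases that actually matter: a quotient of a $4$-ball by a discrete group is again $4$-dimensional, whereas in the ALF, ALG and ALH regimes the tangent cone has dimension $3$, $2$ or $1$. The number of geodesic loops of rescaled length $\le R/8$ based near $p_i$ grows without bound as $i\to\infty$, and the local covering groups $G_i$ converge not to a discrete group but to a connected Lie group $G$ of positive dimension; the quotient $B_\kappa/G$ is then a lower-dimensional flat manifold. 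This is exactly what an Arzel\`a--Ascoli argument on finitely many generators cannot give you, and it is why the paper invokes Fukaya's equivariant Gromov--Hausdorff convergence theorem \cite{Fukaya} at this step: it supplies both the limit Lie group $G\le\mathbb{R}^4\rtimes\mathrm{SU}(2)$ (the $\mathrm{SU}(2)$ restriction coming from the hyperk\"ahler holonomy) and the convergence $B_\kappa/G_i\to B_\kappa/G$. You flagged ``making the limit group rigorous'' as the hard part, but you should also revise your expectation of what $\Gamma_\infty$ looks like.

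Your freeness argument is also stated in a way that does not quite close. The assertion that the fixed-point set of rotational elements ``sits over the cone vertex'' is not justified and does not really make sense in this local picture: $B_\kappa$ is a small Euclidean ball around a lift of $q$, which is at cone-distance $R$ from the apex, so the apex plays no role inside $B_\kappa$. The correct mechanism, which the paper uses, is to suppose a nontrivial $g\in G$ fixes some $y\in B_\kappa$, take $g_i\in G_i$ with $g_i\to g$, and base the corresponding geodesic loops at points approaching $y$: their (rescaled) lengths tend to $0$ while $|\mathrm{hol}-\mathrm{Id}|$ tends to the rotation part of $g$, which is bounded away from $0$; this contradicts $|\mathrm{hol}-\mathrm{Id}|\le C_2\,L/r$ from Theorem \ref{hol-control}. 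Once freeness is established this way, the paper proceeds, unlike you, to a short Lie-algebra case analysis in $\mathbb{R}^4\rtimes\frak{su}(2)$, using the quaternionic fact that $\mathbf{a}\in\frak{su}(2)$ is either zero or invertible, to identify the local models as $\mathbb{R}^{4-\dim G}$; this explicit identification is what makes the subsequent classification in Theorem \ref{classify-tangent-cone} go through.
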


 \begin{proof}
 Pick $p\in C(S(\infty))-\{o\}$, we may find $p_i\in M$ such that $p_i\rightarrow p$ in Gromov-Hausdorff sense. Pick some small enough number $\kappa$. For $i$ large enough the ball $(B_{\kappa r_i}(p_i),r_i^{-2}g)$ is $B_{\kappa}/G_i$, where $B_{\kappa}$ is the ball in the Euclidean space with metric pulled back by exponential map, and $G_i$ is the group of local covering transforms.
 By Fukaya's result in \cite{Fukaya}, $G_i$ converge to some Lie group $G$ and $B_{\kappa}/G_i$ converge to $B_{\kappa}/G$. So $G$ is a subgroup of $\mathbb{R}^4 \rtimes \mathrm{SU}(2)\le\mathrm{Iso}(\mathbb{R}^4)$. The action of $G$ on $B_{\kappa}$ corresponds to the action of $G_i$ on $B_{\kappa r_i}(p_i)$. So if an element $g\in G-\{\mathrm{Id}\}$ has a fixed point in $B_{\kappa}$, the geodesic loops in $B_{\kappa r_i}(p_i)$ corresponding to the sequence $g_i\in G_i$ converging to $g$ would have large $|\mathrm{hol-Id}|$ compared to their lengths by the relationship between geodesic loops and covering transforms. This contradicts Theorem \ref{hol-control}. So the action of $G$ is free. Therefore it is enough to look at the Lie algebra $\mathfrak{g}$ i.e. the infinitesimal part of $G$ to determine the local geometry. We have the following cases:

 (0) $\mathrm{dim}G=0$. We get $\mathbb{R}^4$ locally.

 (1) $\mathrm{dim}G=1$. Then $\mathfrak{g}$ is generated by $\mathbf{x}\rightarrow \mathbf{a}\mathbf{x}+\mathbf{b}$, where $\mathbf{a}\in\mathfrak{su}(2)$.
 Notice that $\mathrm{SU}(2)$ can be naturally identified with the unit sphere of quaternions. Then $\mathfrak{su}(2)$ would be the space of pure imaginary quaternions. So the Lie bracket is
 exactly twice of the cross product in $\mathbb{R}^3$.

 $\mathbf{a}$ must be $\mathbf{O}$ or invertible by the property of quaternions.  When $\mathbf{a}=\mathbf{O}$, $G$ consists of pure translations, we get $\mathbb{R}^3$.

 Otherwise, $\mathbf{a}\mathbf{x}+\mathbf{b}=\mathbf{a}(\mathbf{x}+\mathbf{a}^{-1}\mathbf{b})$. The fixed point $-\mathbf{a}^{-1}\mathbf{b}$ must be outside $B_{\kappa}$. $G$ is generated by $\mathbf{x}\rightarrow \left( {\begin{array}{*{20}c}
    e^{i\theta} & 0  \\
    0 & e^{-i\theta}   \\
 \end{array}} \right) (\mathbf{x}+\mathbf{a}^{-1}\mathbf{b}) -\mathbf{a}^{-1}\mathbf{b}$.
 If we take the 1-1 correspondence $\mathbf{x}\rightarrow\mathbf{x}+\mathbf{a}^{-1}\mathbf{b}=(x+iy,z+iw)\rightarrow(x+iy,z-iw)$,then $G$ becomes $\left( {\begin{array}{*{20}c}
    e^{i\theta} & 0  \\
    0 & e^{i\theta}   \\
 \end{array}} \right)$. So it is cone over $\mathbb{S}^3/\mathbb{S}^1$, where $\mathbb{S}^3/\mathbb{S}^1$ is the Hopf fiberation. So $B_{\kappa}$ is a local piece of the cone over $\mathbb{S}^2$, in other words, $\mathbb{R}^3$, too.

 (2) $\mathrm{dim}G=2$. Any 2-dimensional Lie algebra has a basis $e_1,e_2$ satisfying $[e_1,e_2]=ce_1$. For $\mathfrak{g}$, $e_1(\mathbf{x})=\mathbf{a}\mathbf{x}+\mathbf{b},e_2(\mathbf{x})=\mathbf{Ax}+\mathbf{B}$ must satisfy $$[\mathbf{a},\mathbf{A}]\mathbf{x}+(\mathbf{aB}-\mathbf{Ab})
 =[\mathbf{ax}+\mathbf{b},\mathbf{Ax}+\mathbf{B}]=ce_1=c(\mathbf{ax}+\mathbf{b}).$$
 Here $\mathbf{A},\mathbf{a}\in \mathfrak{su}(2)$.
 If $\mathbf{a}=\mathbf{O}$, $\mathbf{Ab}=-c\mathbf{b}$. So $\mathbf{A}=\mathbf{O}$. $G$ consists of pure translations, we get $\mathbb{R}^2$.
 If $\mathbf{a}\not=\mathbf{O}$, then since $[\mathbf{a},\mathbf{A}]=c\mathbf{a}$, we must have $\mathbf{a}=\mathbf{A}$, and $c=0$.
 So $\mathbf{aB}=\mathbf{Ab}=\mathbf{ab}\Rightarrow \mathbf{B}=\mathbf{b}$, contradiction.

 (3) $\mathrm{dim}G=3$. We get $\mathbb{R}^1$.
 \end{proof}

 \begin{theorem}
 The tangent cone at infinity $C(S(\infty))$ must be the following:

 (ALE) $\mathbb{R}^4/\Gamma$, where $\Gamma$ is a discrete subgroup of $O(4)$ acting freely on $\mathbb{S}^3$

 (ALF-$A_k$) $\mathbb{R}^3$

 (ALF-$D_k$) $\mathbb{R}^3/\mathbb{Z}_2$=cone over $\mathbb{RP}^2$

 (ALG) flat cone with angle $\in(0,2\pi]$

 (ALH-non-splitting) $\mathbb{R}_+$
 \label{classify-tangent-cone}
 \end{theorem}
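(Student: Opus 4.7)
The plan is to pass from the local structure established in Theorem \ref{local-tangent-cone} to a global classification, treating each possible dimension of $S(\infty)$ separately. By Theorem \ref{local-tangent-cone} the punctured cone $C(S(\infty))\setminus\{o\}$ is a flat Riemannian manifold that is locally isometric to $\mathbb{R}^m$ for some $m\in\{1,2,3,4\}$. Since we are now assuming $S(\infty)$ is connected (the splitting case having been settled by Theorem \ref{one-end}), the local dimension $m$ is the same at every point, so $S(\infty)$ is a compact connected smooth Riemannian $(m-1)$-manifold whose link metric has constant sectional curvature $1$ when $m\geq 3$, is flat when $m=2$, and is trivial when $m=1$.

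Next, I would split into four cases and apply classical classifications. When $m=4$, $S(\infty)$ is a closed 3-manifold of constant curvature $1$, hence a spherical space form $\mathbb{S}^3/\Gamma$ with $\Gamma$ a finite subgroup of $\mathrm{O}(4)$ acting freely, yielding the ALE cone $\mathbb{R}^4/\Gamma$. When $m=3$, $S(\infty)$ is a closed surface of constant curvature $1$, so either $\mathbb{S}^2$, giving the ALF-$A_k$ cone $\mathbb{R}^3$, or $\mathbb{RP}^2$, giving the ALF-$D_k$ cone $\mathbb{R}^3/\mathbb{Z}_2$. When $m=2$, $S(\infty)$ is a closed 1-manifold, necessarily a circle of some circumference $L$, and $C(S(\infty))$ is the flat 2-cone $\mathbb{C}_\beta$ of total angle $L=2\pi\beta$. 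When $m=1$, connectedness forces $S(\infty)$ to be a single point, so $C(S(\infty))=\mathbb{R}_+$, the ALH-non-splitting model.

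The step I expect to be the main obstacle is the upper bound $\beta\leq 1$ in the ALG case; the other cases follow immediately from the constant-curvature classification. My plan for this bound is as follows. The decay $|\mathrm{Rm}|=O(r^{-2-\epsilon})$ makes $M$ asymptotically nonnegatively curved in Kasue's sense, and Theorem \ref{uniqueness-of-tangent-cone} identifies $C(S(\infty))$ as the pointed Gromov--Hausdorff limit of the rescalings $(M,t^{-2}g,o)$. Because the sectional curvature of these rescaled metrics tends uniformly to zero on every compact subset of $C(S(\infty))\setminus\{o\}$, a Toponogov-type comparison passes to the limit and shows that $C(S(\infty))$ is an Alexandrov space of nonnegative curvature. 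For a flat $2$-dimensional metric cone this is equivalent to the angle concentration $2\pi-L$ at the vertex being nonnegative, so $L\leq 2\pi$ and $\beta\leq 1$, completing the classification.
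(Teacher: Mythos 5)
Your casework by the dimension of $S(\infty)$ matches the paper's, and the ALE, ALF, and ALH-non-splitting cases are handled in essentially the same way. The divergence is in the ALG angle bound $\beta\leq 1$, and there your argument has a genuine gap. You assert that because the sectional curvature of the rescalings $(M,t^{-2}g)$ tends to zero on compact subsets of the punctured cone $C(S(\infty))\setminus\{o\}$, a Toponogov-type comparison "passes to the limit" and shows $C(S(\infty))$ is Alexandrov of nonnegative curvature. But the rescaled metrics do \emph{not} have a uniform lower sectional curvature bound near the basepoint: at $o$, the sectional curvature of $t^{-2}g$ has magnitude $t^2|\mathrm{Rm}_g(o)|\to\infty$, and more generally the lower bound degenerates on the shrinking region $\{r_{t^{-2}g}\lesssim 1/t\}$ that collapses to the vertex. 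GH-stability of an Alexandrov lower bound requires a uniform bound over the whole sequence, and that is exactly what fails here; moreover the comparison triangles probing a putative vertex angle $>2\pi$ have edges passing through (or arbitrarily close to) the vertex, i.e.\ through the approximating points near $o$ where the curvature control breaks down, so the Toponogov comparison simply cannot be applied to them. The paper circumvents this by a Ricci-based rather than sectional-based argument: a flat $2$-cone of angle $>2\pi$ contains a line through the vertex, and the almost splitting theorem for Ricci limit spaces (Cheeger--Colding, Theorem 6.64), which is insensitive to the sectional-curvature blow-up and valid for collapsed limits, forces $C(S(\infty))\cong\mathbb{R}\times Y$; for a $2$-dimensional metric cone this means $\mathbb{R}^2$ of angle exactly $2\pi$, a contradiction. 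To repair your argument you would need either to adopt this splitting argument or to invoke a genuinely global comparison theorem for manifolds of asymptotically nonnegative curvature (\`a la Abresch) rather than the local-to-global passage sketched; as written, the crucial step is not justified.
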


 \begin{proof}
 By Theorem \ref{uniqueness-of-tangent-cone}, the tangent cone at infinity is unique and must be a metric cone $C(S(\infty))$.
 By Theorem \ref{one-end} and Theorem \ref{local-tangent-cone}$, S(\infty)$ is a connected manifold since we have assumed that $M$ is not ALH-splitting.

 (ALH-nonsplitting) If $S(\infty)$ is 0-dimensional, $C(S(\infty))$ must be $\mathbb{R}_+$.

 (ALG) If $S(\infty)$ is 1-dimensional, $C(S(\infty))$ is a flat cone. If the cone angle is bigger than $2\pi$,
 it contains a line, so there is a contradiction from the almost splitting theorem. (c.f. Theorem 6.64 of \cite{CheegerColding})

 (ALF) If $S(\infty)$ is 2-dimensional, $S(\infty)$ must be a 2-manifold with constant positive curvature 1. So its universal cover is the space form $\mathbb{S}^2$. So $S(\infty)=\mathbb{S}^2/\Gamma$, where the group of covering transforms $\Gamma$ is a subgroup of $\mathrm{Iso}(\mathbb{S}^2)=\mathrm{O}(3)$ acting freely. Now pick any element $A$ in $\Gamma$, $A^2\in \mathrm{SO}(3)$. However, any element in $\mathrm{SO}(3)$ has a fixed point, so $A^2=\mathrm{Id}$. So $A=\pm\mathrm{Id}$. Therefore $S(\infty)$=$\mathbb{S}^2$(the $A_k$ case) or $\mathbb{RP}^2$(the $D_k$ case).

 (ALE) If $S(\infty)$ is 3-dimensional, $S(\infty)$ must has constant sectional curvature, too. Its universal cover is the space form, too.
 So $C(S(\infty))=\mathbb{R}^4/\Gamma$, where $\Gamma$ is a discrete subgroup of $O(4)$ acting freely on $\mathbb{S}^3$
 \end{proof}

 From now on, we use the terminology ALE,ALF,ALG,and ALH to distinguish different type of the (unique) tangent cone at infinity. Those terminologies make sense after we prove more properties.

 \begin{theorem}
 In the ALE case, $M$ has maximal volume growth rate and it is in Kronheimer's list.
 \label{ALE}
 \end{theorem}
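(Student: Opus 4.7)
The plan is to combine the classification of the tangent cone $\mathbb{R}^4/\Gamma$ from Theorem \ref{classify-tangent-cone} with the existing structure and classification results for ALE 4-manifolds. The steps I outline below are mostly citations with some bookkeeping; the ALE case should be the shortest of the four.

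First I would deduce maximal volume growth from the cone structure. By Theorem \ref{uniqueness-of-tangent-cone}, $(M,t^{-2}g)$ converges in Gromov--Hausdorff sense to the 4-dimensional cone $\mathbb{R}^4/\Gamma$, which is non-collapsed with unit-ball volume $\omega_4/|\Gamma|$. Combining Bishop--Gromov monotonicity (valid here since the Ricci curvature is zero) with Colding's volume-continuity theorem for non-collapsed Ricci-limit spaces gives
$$\lim_{r\to\infty}\frac{\mathrm{Vol}(B_r(o))}{r^4}=\frac{\omega_4}{|\Gamma|},$$
so $M$ has Euclidean volume growth, which is the maximal rate allowed by Bishop--Gromov.

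Next I would invoke the Bando--Kasue--Nakajima theorem: any complete non-compact Ricci-flat 4-manifold with Euclidean volume growth and $|\mathrm{Rm}|=O(r^{-2-\epsilon})$ is ALE of order $\epsilon$, i.e.\ there exist a compact set $K\subset M$, a finite subgroup $\Gamma'\subset O(4)$, and a diffeomorphism $\Phi\colon(\mathbb{R}^4\setminus B_R)/\Gamma'\to M\setminus K$ with $\Phi^*g=g_{\mathrm{flat}}+O'(r^{-\epsilon})$. Uniqueness of the tangent cone forces $\Gamma'=\Gamma$. Because $M$ is hyperk\"ahler, the three parallel complex structures $I,J,K$ pull back to $I,J,K$-valued tensors on $(\mathbb{R}^4\setminus B_R)/\Gamma$ which converge at infinity to a standard hyperk\"ahler triple on $\mathbb{R}^4$; this triple must be preserved by the deck action of $\Gamma$, which therefore lies in $\mathrm{SU}(2)\subset\mathrm{SO}(4)$.

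Finally, Kronheimer's classification in \cite{Kronheimer1, Kronheimer2} places every hyperk\"ahler ALE 4-manifold with group $\Gamma\subset\mathrm{SU}(2)$ into his ADE list of hyperk\"ahler resolutions of the Kleinian singularity $\mathbb{C}^2/\Gamma$, so $M$ belongs to that list. The main obstacle is Step 2: one must check that the Bando--Kasue--Nakajima hypotheses are indeed satisfied here with the decay order $\epsilon$ of (\ref{eq:faster than quadratic decay}), and carry out the reduction of the asymptotic structure group from $O(4)$ to $\mathrm{SU}(2)$ cleanly using the parallel hyperk\"ahler triple. Steps 1 and 3 are direct consequences of results already in the literature.
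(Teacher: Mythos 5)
Your proposal is correct and follows essentially the same route as the paper: Colding's volume-convergence theorem to get Euclidean volume growth, then Kronheimer's classification. The paper's proof is two lines and leaves the Bando--Kasue--Nakajima step and the $\mathrm{SU}(2)$-reduction of the structure group implicit (folding them into the phrase ``Kronheimer's works apply''), whereas you spell them out explicitly; the substance is the same.
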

 \begin{proof}
By Colding’s volume convergence theorem \cite{Colding}, $M^4$ has maximal volume growth rate. Moreover, the faster than quadratic curvature decay condition ensures that $\int_M|\mathrm{Rm}|^2<\infty$. So by Bando-Kasue-Nakajima’s work \cite{BandoKasueNakajima},
$M$ is ALE of order 4. So Kronheimer’s works in \cite{Kronheimer1} and \cite{Kronheimer2} apply.
 \end{proof}

\subsection{Decomposing geodesic loops into basis}
 Before proceeding, we need a theorem about Lie groups. For any Lie group $H$, the exponential map $\exp$ from a small ball $B_{\kappa}=B_{\kappa}(o)$ in its Lie algebra $\mathfrak{h}$ to $H$ is a bijection. We call the inverse of $\exp$ to be $\log$. If there is no ambiguity, the length of $g\in H$ will mean $|\log g|$.

 \begin{theorem}
 (Theorem 4.5 of \cite{BuserKarcher}) Suppose that $H$ is a Lie group, $G_i$ are discrete subgroups of $H$ converging to a $k$-dimensional Lie subgroup $G$ of $H$. Then for $i$ large enough and $\kappa$ small enough, there exist $k$ elements $g_{i,j}$(j=1,2,...,k) such that $|\log g_{i,j}|$ converge to 0 as $i$ goes to infinity and all element in $B_{\kappa}(\mathrm{Id})\cap G_i$ is generated by $g_{i,j}$. What is more, for any fixed large enough $i$, the angle between $\log g_{i,j}$ are bounded from below by a small positive number independent of $i$. In addition, the commutator of $g_{i,a}$ and $g_{i,b}$ is generated by $g_{i,c}$ for $c=1,2,...,\min\{a,b\}-1$. In particular, $g_{i,1}$ commutes with others.
 \label{generator}
 \end{theorem}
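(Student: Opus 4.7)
The plan is to construct $g_{i,j}$ by a greedy short-basis procedure in the spirit of Gromov's short geodesic basis for almost flat manifolds. Fix a small threshold $\theta_0 > 0$ (to be chosen, independent of $i$). For each large $i$, let $g_{i,1}$ be a shortest nonidentity element of $G_i \cap B_\kappa(\mathrm{Id})$; since $G_i$ converges to a Lie group $G$ with $\dim G = k \ge 1$, such an element exists and $|\log g_{i,1}| \to 0$ as $i \to \infty$. Inductively, having chosen $g_{i,1}, \ldots, g_{i,j-1}$ with $j \le k$, set $V_{i,j-1} := \mathrm{span}\{\log g_{i,1}, \ldots, \log g_{i,j-1}\} \subset \mathfrak{h}$ and let $g_{i,j}$ be a shortest element of $G_i \cap B_\kappa(\mathrm{Id})$ whose log makes angle at least $\theta_0$ with $V_{i,j-1}$. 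The procedure is well-defined: passing to a subsequence, $V_{i,j-1}$ converges to a proper subspace of $\mathfrak{g}$, so any direction in $\mathfrak{g}$ transverse to this limit is realized as $\log$ of an arbitrarily small element of $G_i$ via the convergence $G_i \to G$. The pairwise angles among $\log g_{i,1}, \ldots, \log g_{i,k}$ are therefore at least $\theta_0$ by construction, yielding the stated angle bound.

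The construction terminates at $j = k+1$: for large $i$, every element $h \in G_i \cap B_\kappa(\mathrm{Id})$ has $\log h$ within $o(1)$ of $\mathfrak{g}$, and since $V_{i,k}$ converges to $\mathfrak{g}$, the angle between $\log h$ and $V_{i,k}$ is $o(1) < \theta_0$, so no further generator qualifies. The length ordering $|\log g_{i,1}| \le \cdots \le |\log g_{i,k}| \le \kappa$ is immediate from the greedy choice, and each $|\log g_{i,j}| \to 0$ because $\mathfrak{g}$ admits a basis of arbitrarily short elements approximated from $G_i$.

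Next I would establish the generating and commutator claims together. For any $h \in G_i \cap B_\kappa(\mathrm{Id})$, expand $\log h$ in terms of $\log g_{i,1}, \ldots, \log g_{i,k}$ up to a small perpendicular error, and peel off from the top: set $n_k := \mathrm{round}(c_k)$ where $c_k$ is the $\log g_{i,k}$ coefficient, and replace $h$ by $h \cdot g_{i,k}^{-n_k}$. A Baker--Campbell--Hausdorff computation shows the new element has log $(\theta_0/2)$-close to $V_{i,k-1}$ and of norm strictly less than $|\log g_{i,k}|$; iterating down through $j = k-1, \ldots, 1$ and invoking the minimality of $g_{i,1}$ (any nontrivial element shorter than $g_{i,1}$ is impossible) forces the final residue to be the identity, so $h \in \langle g_{i,1}, \ldots, g_{i,k}\rangle$. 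For the commutator, BCH gives
\[
|\log [g_{i,a}, g_{i,b}]| \le C|\log g_{i,a}|\,|\log g_{i,b}| \le C\kappa\,|\log g_{i,\min(a,b)}|,
\]
which for $\kappa$ small is strictly less than $|\log g_{i,\min(a,b)}|$. By the minimality that defined $g_{i,\min(a,b)}$, the log of $[g_{i,a}, g_{i,b}]$ must lie within angle $\theta_0$ of $V_{i,\min(a,b)-1}$, and the decomposition argument above then places $[g_{i,a}, g_{i,b}]$ in $\langle g_{i,1}, \ldots, g_{i,\min(a,b)-1}\rangle$. In particular $[g_{i,1}, g_{i,b}] \in \langle \varnothing \rangle = \{\mathrm{Id}\}$, so $g_{i,1}$ commutes with all others.

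The main obstacle is keeping the peeling step free of circular reasoning: the minimality of the $g_{i,l}$'s is used both to define the basis and to conclude that intermediate residues lying $\theta_0$-close to $V_{i,l-1}$ must already be words in $g_{i,1}, \ldots, g_{i,l-1}$. The cure is to arrange the BCH errors to be quadratic in $\kappa$ while the spacing of $G_i$ inside each $V_{i,l}$ is linear in $\kappa$, so for sufficiently small $\kappa$ the residues at each stage are forced to vanish exactly rather than produce a spurious short transverse element that would contradict the greedy choice.
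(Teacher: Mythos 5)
Your construction reverses the paper's: the paper defines $g_{i,j}$ as the \emph{shortest element not generated by the previous ones}, so that the generating property is built in by fiat and the angle bound is \emph{derived} (by a pigeonhole argument on projections of $0,\log g_{i,j},2\log g_{i,j},\dots,10\log g_{i,j}$ onto slabs of $V_{i,j-1}$). You instead build in the angle bound by fiat and must \emph{derive} the generating property. That is a legitimate alternative strategy in principle, but as written there is a genuine gap at the crucial step.

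The problem is the peeling argument. After rounding and removing $g_{i,k}^{n_k},\dots,g_{i,1}^{n_1}$ from $h$, the residue $h'$ satisfies (up to BCH and perpendicular errors)
\[
\log h' \approx \sum_{j=1}^{k}(c_j-n_j)\log g_{i,j},\qquad |c_j-n_j|\le \tfrac12,
\]
so all you can conclude is $|\log h'|\lesssim \tfrac12\sum_j|\log g_{i,j}|$. Since $|\log g_{i,k}|\ge|\log g_{i,1}|$ by your length ordering, this bound is \emph{not} less than $|\log g_{i,1}|$ in general (already for $k\ge 2$ if the $|\log g_{i,j}|$ are comparable, and decisively so if they are not). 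Thus the minimality of $g_{i,1}$ does not force $h'=\mathrm{Id}$, and the argument stalls. The two intermediate claims you invoke are also not established: (i) the residue after one peeling step is ``$(\theta_0/2)$-close to $V_{i,k-1}$'' fails when $|c_k-n_k|\,|\log g_{i,k}|$ is comparable to the $V_{i,k-1}$-component, which is not ruled out; and (ii) ``norm strictly less than $|\log g_{i,k}|$'' fails because the $V_{i,k-1}$-components are untouched by peeling in the $k$-th direction. You flag the circularity worry yourself at the end, but the proposed cure (making BCH errors quadratic in $\kappa$ versus linear spacing) addresses only the error terms, not the fact that the residue's coefficients $c_j-n_j$ are a priori only bounded by $1/2$ rather than by $o(1)$. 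What is really needed — and what remains unproved — is that the angle-greedy set is a \emph{basis} of the approximate lattice $G_i\cap B_\kappa$, i.e.\ a Minkowski-type reduction statement; without that, $c_j$ need not be near-integers and peeling does not terminate at the identity. The paper sidesteps all of this by its choice of selection rule: an element not generated by $g_{i,1},\dots,g_{i,j-1}$ and shorter than $g_{i,j}$ cannot exist by definition, and the angle bound is then extracted quantitatively via pigeonhole rather than assumed. Unless you prove a lattice-reduction lemma showing your angle-greedy choice yields a genuine basis (with an explicit admissible $\theta_0$ and tolerance for BCH errors), your route has a hole exactly where the generating claim — the heart of the theorem — should be established.
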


 \begin{remark}
 According to Theorem 4.5 of \cite{BuserKarcher}, it is enough to assume that $G_i$ have a local group structure near identity rather than being a group. Actually the theorem is even true if the product of $a,b\in G_i$ contains an error controlled by $C_i|a||b|$, where $C_i$ converge to 0 as $i$ goes to infinity. In particular, the local groups $G_i$ in Theorem \ref{local-tangent-cone} satisfy the Theorem. For those local groups, since the rotation (i.e holonomy) part is bounded by the translation (i.e. length) part by Theorem \ref{hol-control}, the length of the geodesic loop is equivalent to the length in the above theorem.
 \end{remark}

 Now we are ready to go back to study the length of short geodesic loops.  In the rest of this section, we fix a geodesic ray $\alpha$ from $o$ to infinity and start doing analysis about geodesic loops based on the ray. This ray corresponds to the point $(1,[\alpha])$ in the tangent cone at infinity $C(S(\infty))$.

 \begin{theorem}
 In the ALF-$A_k$ or ALF-$D_k$ cases, there is a geodesic loop $\gamma_1$ such that when we slide it along the fixed ray to get $\gamma_{r,1}$ based at $\alpha(r)$, its length
 $L(r):=L(\gamma_{r,1})=L_{\infty}+O(r^{-\epsilon})$ and its holonomy satisfies $|\mathrm{hol-Id}|=O(r^{-1-\epsilon})$.
 What is more, any loop based at $\alpha(r)$ with length smaller than $\kappa r$ is generated by $\gamma_{r,1}$ in the sense of Gromov.
 \label{3d-tangent-cone}
 \end{theorem}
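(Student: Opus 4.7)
The plan is to (i) identify $\gamma_{r,1}$ and prove the Gromov-generation statement by applying Theorem \ref{generator} to the local covering groups at large scale, and (ii) extract the decay rates from the coupled ODE system furnished by Proposition \ref{fundamental-equation} and Theorem \ref{hol-control}.

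First, Theorem \ref{classify-tangent-cone} tells us the tangent cone at infinity is $\mathbb{R}^3$ or $\mathbb{R}^3/\mathbb{Z}_2$, so three-dimensional; at the smooth point $(1,[\alpha])$ the local Lie group $G$ from the proof of Theorem \ref{local-tangent-cone} is therefore one-dimensional. Applying Theorem \ref{generator} with $k=1$ to the discrete local covering groups on $B_\kappa \subset T_{\alpha(r)}M$ (in the metric pulled back by $\exp$ and rescaled by $r^{-1}$), for $r$ sufficiently large one obtains a unique shortest generator, and every element of length $<\kappa r$ is one of its powers. Pulling back via the exponential map gives the shortest geodesic loop $\gamma_{r,1}$ at $\alpha(r)$ together with the Gromov-generation assertion. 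Fix $r_0$ large and slide $\gamma_1 := \gamma_{r_0,1}$ along $\alpha$; the corollary to Theorem \ref{hol-control} keeps the slid loop within the conjugate radius for all $r\ge r_0$, and continuity in $r$ combined with the uniqueness in Theorem \ref{generator} identifies the slid loop with the intrinsic shortest loop $\gamma_{r,1}$.

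Combining Proposition \ref{fundamental-equation} with the curvature hypothesis (noting that $\gamma_{r,1}\subset\{r'\ge r/2\}$ by the corollary to Theorem \ref{hol-control}) and Theorem \ref{hol-control} yields
\[
|L'(r)| \le h(r), \qquad |h'(r)| \le C\,L(r)\,r^{-2-\epsilon}, \qquad h(r) \le C_2\,L(r)/r.
\]
The crux of the proof is to upgrade these to a uniform bound $L(r) \le L_0$: the ODE bounds alone only give the polynomial estimate $L(r) \le L(r_0)(r/r_0)^{C_2}$, which is not enough. I would argue by contradiction. If $L(r_i) \to \infty$ along some $r_i \to \infty$, the three-dimensionality of the tangent cone forces $L(r_i)/r_i \to 0$, so the rescaled metric $L(r_i)^{-2}g$ at $\alpha(r_i)$ has curvature $(L(r_i)/r_i)^2\, r_i^{-\epsilon} \to 0$ on balls of any fixed radius, and the rescaled pointed spaces converge in the Gromov--Hausdorff sense to a complete flat limit space $X$ carrying a closed geodesic loop of length exactly $1$. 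Applying Theorem \ref{generator} once more to the local covering groups on $X$, the one-parameter limit group would have to match the local group action already identified at scale $r_0$ up to bounded factors, contradicting $L(r_i)\to\infty$.

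Once $L \le L_0$ is established, $|h'| \le C L_0\, r^{-2-\epsilon}$ is integrable at infinity, so $h$ has a limit; but $h \le C_2 L_0/r \to 0$, forcing the limit to be $0$ and giving
\[
h(r) \le \int_r^\infty |h'(s)|\,ds = O(r^{-1-\epsilon}).
\]
Then $|L'| \le h = O(r^{-1-\epsilon})$ is integrable, so $L$ has a finite limit $L_\infty$ and
\[
|L(r) - L_\infty| \le \int_r^\infty h(s)\,ds = O(r^{-\epsilon}),
\]
completing the proof. The main obstacle is the boundedness of $L$: converting the \emph{geometric} statement that the tangent cone is three-dimensional into an \emph{analytic} $L^\infty$ bound on $L(r)$, after which the decay rates in $r^{-\epsilon}$ and $r^{-1-\epsilon}$ follow mechanically by integrating the coupled ODEs twice.
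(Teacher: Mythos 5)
Your setup is the same as the paper's — Theorem \ref{generator} applied to the local covering groups at $\alpha(r)$ produces $\gamma_{r,1}$, the Gromov-generation statement, and $\lim_{r\to\infty}L(r)/r=0$ — and the end of your argument (deducing $|\mathrm{hol-Id}|=O(r^{-1-\epsilon})$ and $L=L_\infty+O(r^{-\epsilon})$ once $L$ is known to be bounded) also matches the paper. The problem is in the middle. You write that ``the ODE bounds alone only give the polynomial estimate $L(r)\le L(r_0)(r/r_0)^{C_2}$, which is not enough,'' and therefore invoke a rescaling contradiction to establish $L\le L_0$. This misses the key point: integration of the coupled system must be done \emph{from infinity}, not from $r_0$. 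Since $L=o(r)$ forces $|\mathrm{hol-Id}|(r)\le C_2L(r)/r\to 0$, one has the boundary condition $|\mathrm{hol-Id}|(\infty)=0$, so
\[
|\mathrm{hol-Id}|(r)\;=\;-\int_r^\infty|\mathrm{hol-Id}|'\,dt\;\le\;\int_r^\infty CL(t)\,t^{-2-\epsilon}\,dt.
\]
Starting from $L\le r/2$ this gives $|\mathrm{hol-Id}|=O(r^{-\epsilon})$, hence $L=O(r^{1-\epsilon})$ by forward integration of $|L'|\le|\mathrm{hol-Id}|$; feeding this back yields $|\mathrm{hol-Id}|=O(r^{-2\epsilon})$, $L=O(r^{1-2\epsilon})$, and so on. After finitely many iterations ($k\epsilon>1$) one gets $L=O(1)$ and then $|\mathrm{hol-Id}|=O(r^{-1-\epsilon})$ directly — no compactness or rescaling required. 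Boundedness of $L$ is thus an \emph{output} of the ODE bootstrap, not a separate input.

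Moreover, the rescaling contradiction you propose is not convincing as written. After rescaling by $L(r_i)^{-1}$ at $\alpha(r_i)$, the pointed limit $X$ is a complete flat $4$-manifold carrying a shortest geodesic loop of length exactly $1$; such spaces certainly exist (e.g.\ $\mathbb{R}^3\times\mathbb{S}^1$), so there is no contradiction intrinsic to $X$ itself. The claim that ``the one-parameter limit group would have to match the local group action already identified at scale $r_0$ up to bounded factors'' is not explained: Theorem \ref{generator} applies to one fixed converging sequence of local groups, and does not say anything comparing generators produced at two different length scales. Finally, you omit the paper's concluding claim that $L_\infty>0$ (proved by the explicit Jacobi-type comparison function $f$); the displayed statement only asserts $L=L_\infty+O(r^{-\epsilon})$, so this is not strictly a gap in what you were asked to prove, but the positivity is used later and is part of the paper's argument.
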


 \begin{proof}
 In this case, $(B_{\kappa r}(\alpha(r)),r^{-2}g)$ converge to $B_{\kappa}((1,[\alpha]))\subset C(S(\infty))$ by Theorem \ref{local-tangent-cone}. We may make $\kappa$ even smaller to apply Theorem \ref{generator}. We get $\gamma_{r,1}$ corresponding to $g_{r,1}$ in Theorem \ref{generator}. Then any loop based at $\alpha(r)$ with length smaller than $\kappa r$ is generated by $\gamma_{r,1}$ in the sense of Gromov.
 There is an ambiguity to choose $\gamma_{r,1}$. The same loop with reverse direction would play the same role.
 However, we can choose them consistently so that they are the sliding of each other along the ray.
 By Theorem \ref{generator},
 $$\lim_{r\rightarrow \infty} {\frac{L(r)}{r}} = 0.$$
 So the holonomy along the loop converges to identity by Theorem \ref{hol-control}.   It follows that
 $$
 \begin{array} {lcl} |\mathrm{hol-Id}|(r) & = &   |\mathrm{hol-Id}| (\infty) -\displaystyle \int_r^\infty\; |\mathrm{hol-Id}|' \mathrm{d}t\\
 & \le & 0 +  \displaystyle \int_r^\infty\; CLt^{-2-\epsilon} \mathrm{d}t  \\
 & \le & O(r^{-\epsilon}).\end{array}$$
 by the equation that $||\mathrm{hol-Id}|'| < CLr^{-2-\epsilon}$. Plug this back to the equation
 \[
  |L'| \leq  |\mathrm{hol-Id}|,
  \]
 we obtain
 $$
 \begin{array}{lcl} L(r) & = & L(r_0) + \displaystyle \int_{r_0}^{r} L'(t) \mathrm{d}t \leq  L(r_0) + \displaystyle \int_{r_0}^{r}  |\mathrm{hol-Id}| \mathrm{d}t\\
 & \le  & L(r_0) +   \displaystyle \int_{r_0}^{r} C t^{-\epsilon} d\,t  = L(r_0) + C (r^{1-\epsilon}-r_0^{1-\epsilon}).
 \end{array} $$
 In turn $|\mathrm{hol-Id}|\le O(r^{-2\epsilon})$,
 $L\le O(r^{1-2\epsilon}).\cdots$ Through finite steps of iterations, we have
 $$L=L_\infty+O(r^{-\epsilon})\qquad {\rm and}\qquad |\mathrm{hol-Id}|\le O(r^{-1-\epsilon}). $$

 \noindent {\bf Claim}:  The limit length $L_\infty = \displaystyle \lim_{t\rightarrow \infty} L_t  > 0$. Otherwise, since $L=O(r^{-\epsilon})$, after the integration from infinity to $r$, we can
 easily obtain
 $$|\mathrm{hol-Id}|\le O(r^{-1-2\epsilon})$$
 After a finite number of iterations,  we have $$L=O(r^{-1-\epsilon}) \qquad {\rm and} \qquad |\mathrm{hol-Id}|\le O(r^{-2-2\epsilon}).$$
 Now let $$f(r)=\sum_{k=0}^{\infty}\frac{2^{(2+\epsilon)k}r^{-k\epsilon}}
 {\epsilon(\epsilon+1)2\epsilon(2\epsilon+1)...k\epsilon(k\epsilon+1)},$$
 Then $$f''(r)=(\frac{r}{2})^{-2-\epsilon}f(r), \displaystyle f(r)=1+O(r^{-\epsilon}), \displaystyle f'(r)=O(r^{-1-\epsilon}).$$
 So for all $R$ large enough, we have $$L(R)<R^{-1}f(R)$$ and
 $$|L'(R)|<R^{-1}|f'(R)|.$$
 By ODE comparison, we have $L(r)<R^{-1}f(r)$.  Let $R$ go to infinity, $L(r)=0$, this is a contradiction. So $L_\infty>0$.
 \end{proof}

 \begin{theorem}
 In the ALG case, there are commutative geodesic loops $\gamma_1,\gamma_2$ such that when we slide them along the fixed ray $\alpha$ to get $\gamma_{r,1},\gamma_{r,2}$ based at $\alpha(r)$, their length
 $L_j(r):=L(\gamma_{r,j})=L_{\infty,j}+O(r^{-\epsilon})$ and their holonomy satisfy $|\mathrm{hol-Id}|=O(r^{-1-\epsilon})$.
 What is more,any loop based at $\alpha(r)$ with length smaller than $\kappa r$ is generated by $\gamma_{r,1}$ and $\gamma_{r,2}$ in the sense of Gromov.
 \label{2d-tangent-cone}
 \end{theorem}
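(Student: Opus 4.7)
The plan is to mirror the proof of Theorem \ref{3d-tangent-cone}, the essential new ingredient being the production of a \emph{pair} of commutative generators rather than a single one. The commutativity will come for free from the algebraic structure of the limit group in the ALG case.

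First I would invoke Theorem \ref{local-tangent-cone} to conclude that $(B_{\kappa r}(\alpha(r)), r^{-2}g)$ converges in Gromov--Hausdorff sense to the neighborhood $B_\kappa((1,[\alpha]))$ in the flat cone $C(S(\infty))$, and that the local covering transform groups $G_r$ converge to a 2-dimensional Lie subgroup $G \subset \mathbb{R}^4 \rtimes \mathrm{SU}(2)$. Inspecting case (2) of the proof of Theorem \ref{local-tangent-cone}, the only 2-dimensional $G$ compatible with the hyperk\"ahler constraints is pure translations, so $G \cong \mathbb{R}^2$ and is abelian. Applying Theorem \ref{generator} in the local-group form of the subsequent Remark, for $r$ large enough I obtain two elements $g_{r,1},g_{r,2} \in G_r$ whose logarithms shrink to $0$, generate $G_r \cap B_\kappa(\mathrm{Id})$, and whose commutator $[g_{r,1},g_{r,2}]$ is generated by $g_{r,c}$ with $c<\min\{1,2\}=1$ --- hence is trivial. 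Translating back to the manifold via the exponential map, this yields two commuting geodesic loops $\gamma_{r,1},\gamma_{r,2}$ based at $\alpha(r)$ that generate in Gromov's sense every loop of length $\le \kappa r$.

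Next I would choose the loops consistently as slidings of each other along $\alpha$ (up to the discrete ambiguity of reversing orientation), so that each $\gamma_{r,j}$ is a genuine 1-parameter family in $r$. Theorem \ref{hol-control} then gives $|\mathrm{hol-Id}| \le C L_j / r$, and since $L_j(r)/r \to 0$ by Theorem \ref{generator}, the bootstrap of Theorem \ref{3d-tangent-cone} applies to each generator separately: integrating $||\mathrm{hol-Id}|'| \le C L_j r^{-2-\epsilon}$ from $\infty$ back to $r$, substituting into $|L_j'| \le |\mathrm{hol-Id}|$, and iterating finitely many times produces $L_j(r) = L_{\infty,j} + O(r^{-\epsilon})$ and $|\mathrm{hol-Id}|(r) = O(r^{-1-\epsilon})$ for each $j\in\{1,2\}$.

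Finally I would verify $L_{\infty,j} > 0$ by contradiction, exactly as in Theorem \ref{3d-tangent-cone}: if $L_{\infty,j} = 0$, further iteration gives $L_j = O(r^{-1-\epsilon})$ and $|\mathrm{hol-Id}| = O(r^{-2-2\epsilon})$, after which ODE comparison against the explicit auxiliary function $f$ solving $f''=(r/2)^{-2-\epsilon}f$ with $f(r)=1+O(r^{-\epsilon})$ forces $L_j \equiv 0$ along the ray, contradicting the fact that $g_{r,j}$ is a nontrivial generator. The main obstacle I anticipate is guaranteeing commutativity of the two generators, but this is cleanly resolved by combining the structural constraint from Theorem \ref{local-tangent-cone} (which rules out every 2-dimensional Lie algebra except pure translations) with the commutator statement of Theorem \ref{generator}; after that, everything reduces to the one-generator analysis already carried out for the ALF case.
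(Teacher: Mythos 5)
There is a genuine gap, and it is precisely the obstacle the paper singles out as the point where the ALG case departs from the ALF case. You write that you would ``choose the loops consistently as slidings of each other along $\alpha$ (up to the discrete ambiguity of reversing orientation)'' and then conclude that ``$L_j(r)/r\to 0$ by Theorem~\ref{generator}.'' But the ambiguity in choosing the pair $(\gamma_{r,1},\gamma_{r,2})$ at each scale $r$ is not $\pm 1$ --- it is the full group $\mathrm{GL}(2,\mathbb{Z})$ of lattice automorphisms, which is noncompact. When you slide the loops chosen at radius $r$ out to a much larger radius $t$, the slid loops remain a generating pair but need not coincide with the canonical short generators $g_{t,1},g_{t,2}$ produced by Theorem~\ref{generator} at radius $t$; they could differ by an arbitrary integer matrix. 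Consequently, Theorem~\ref{generator} gives $L(g_{t,j})/t\to 0$ for the \emph{canonical} generators but says nothing about the lengths of your \emph{slid} loops. Corollary~3.6 only yields $L_j(t)\le t/2$, which is not enough to conclude $|\mathrm{hol}-\mathrm{Id}|(\infty)=0$, so the bootstrap iteration of Theorem~\ref{3d-tangent-cone} cannot be initiated.

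The paper's actual argument supplies the missing ingredient by working scale-by-scale with the flat-model commutator identity. Writing the covering transforms of the two generators as $T_1(\mathbf{x})=\mathbf{ax}+\mathbf{b}$ and $T_2(\mathbf{x})=\mathbf{Ax}+\mathbf{B}$ and using that $\gamma_{r,1},\gamma_{r,2}$ commute and that $(\log g_{r,1},\log g_{r,2})$ form a detectable angle (from Theorem~\ref{generator}), the paper derives the uniform estimate $|\mathrm{hol}-\mathrm{Id}|\le r^{-1-\epsilon/3}L$ for the canonical generators at each $r$, and hence $|\mathrm{hol}-\mathrm{Id}|\le Cr^{-\epsilon/3}$ for \emph{every} geodesic loop based at $\alpha(r)$ with length at most $\kappa r$. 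It is this bound, valid for all short loops independently of how they sit in the lattice, that forces the holonomy of the slid loop to tend to zero as $t\to\infty$ and enables the iteration. Your proposal has the right skeleton (Theorem~\ref{local-tangent-cone} plus Theorem~\ref{generator} for commutativity, then the ALF bootstrap), but omits this commutator/detectable-angle computation, which is the non-trivial content of the ALG case.
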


 \begin{proof}
 We proceed as in the proof of Theorem \ref{3d-tangent-cone}. We get two loops $\gamma_{r,1}$ and $\gamma_{r,2}$ based at $\alpha(r)$. In this case, the ambiguity is as large as $\mathrm{GL}(2,\mathbb{Z})$. In other words, $\gamma_{r,1}$ and $\gamma_{r,2}$ may jump to $\gamma_{r,1}^{100}\gamma_{r,2}^{99}$ and $\gamma_{r,1}^{101}\gamma_{r,2}^{100}$ respectively after the sliding. Actually $\mathrm{GL}(2,\mathbb{Z})$ is a noncompact group, so we can not estimate the length of the geodesic loops obtained by sliding directly. However, we can still get the same conclusion from the fact that $\gamma_{r,1}$ and $\gamma_{r,2}$ commute and that they form a detectable angle.

 Suppose the manifold is flat, then the covering transforms corresponding to $\gamma_{r,1}$ and $\gamma_{r,2}$ are linear maps
 $T_1(\mathbf{x})=\mathbf{ax}+\mathbf{b}$, $T_2(\mathbf{x})=\mathbf{Ax}+\mathbf{B}$, where
 $\mathbf{a},\mathbf{A}\in\mathrm{SU}(2), \mathbf{b},\mathbf{B}\in\mathbb{C}^2$.
 So (Note that by the construction $|\mathbf{b}|<C|\mathbf{B}|$) $$\mathbf{x}=T_1^{-1}T_2^{-1}T_1T_2(\mathbf{x})=\mathbf{a}^{-1}\mathbf{A}^{-1}\mathbf{aAx}+
 \mathbf{a}^{-1}\mathbf{A}^{-1}((\mathbf{a}-\mathbf{Id})\mathbf{B}+(\mathbf{Id}-\mathbf{A})\mathbf{b}).$$
 On the manifold, we need to count the error caused by curvature.
 So actually $$|(\mathbf{a}-\mathbf{Id})\mathbf{B}-(\mathbf{A}-\mathbf{Id})\mathbf{b}|\le Cr^{-2-\epsilon}|\mathbf{b}||\mathbf{B}|^2,
 |\mathbf{a}^{-1}\mathbf{A}^{-1}\mathbf{aA}-\mathbf{Id}|<Cr^{-2-\epsilon}|\mathbf{b}||\mathbf{B}|.$$

 Now if $|\mathbf{a}-\mathbf{Id}|>r^{-1-\epsilon/3}|\mathbf{b}|$,  then $|\mathbf{A}-\mathbf{Id}||\mathbf{b}|\ge|\mathbf{a}-\mathbf{Id}||\mathbf{B}|-Cr^{-2-\epsilon}|\mathbf{b}||\mathbf{B}|^2.\;$ It follows that  $|\mathbf{A}-\mathbf{Id}|> c \cdot r^{-1-\epsilon/3}|\mathbf{B}|$ for some constant $c.$ Thus, if $r$ is large enough,  the two vectors
 $(\mathbf{A}-\mathbf{Id})\mathbf{b}$ and $(\mathbf{a}-\mathbf{Id}) \mathbf{B}$  have almost the same angle since their difference has much smaller length.  Note that both $\mathbf{A}$ and $\mathbf{a}$ are very close to identity, so $\mathbf{A}-\mathbf{Id}$ and $\mathbf{a}-\mathbf{Id}$ are almost $\log(\mathbf{A})$ and $\log(\mathbf{a})$ respectively.  So Theorem \ref{generator} is reduced to that $(\mathbf{a}-\mathbf{Id},\mathbf{b})$ form a detectable angle with $(\mathbf{A}-\mathbf{Id},\mathbf{B})$. Therefore, $\mathbf{A}-\mathbf{Id}$ and $\mathbf{a}-\mathbf{Id}$ also form a detectable angle because $(\mathbf{A}-\mathbf{Id})\mathbf{b}$ has almost
 the same angle with $(\mathbf{a}-\mathbf{Id})\mathbf{B}$.

 Since the Lie algebra in $\mathfrak{su}(2)$ is simply the cross product and all the matrices are very close to identity
 $$|\mathbf{a}-\mathbf{Id}||\mathbf{A}-\mathbf{Id}|<C|\mathbf{a}^{-1}\mathbf{A}^{-1}\mathbf{a}\mathbf{A}-\mathbf{Id}|
 <Cr^{-2-\epsilon}|\mathbf{b}||\mathbf{B}|.$$
 This is a contradiction.   So $|\mathbf{a}-\mathbf{Id}|\le r^{-1-\epsilon/3}|\mathbf{b}|$. Similarly $|\mathbf{A}-\mathbf{Id}|\le r^{-1-\epsilon/3}|\mathbf{B}|$.

 We have proved that for $\gamma_{r,1}$ and $\gamma_{r,2}$, $|\mathrm{hol-Id}|\le r^{-1-\epsilon/3}L$. For any loop with length smaller than $\kappa r$, we have $|\mathrm{hol-Id}|\le Cr^{-\epsilon/3}$.  When we slide $\gamma_{r,j}$ along the fixed ray towards infinity, the holonomy of the limiting loops must be trivial. The proof in Theorem \ref{3d-tangent-cone} then implies our conclusion. Note that the ambiguity of choosing $\gamma_{r,j}$ now can be removed by requiring that they are the sliding of loops along $\alpha$.
 \end{proof}

 \begin{theorem}
 In the ALH-non-splitting case, there are commutative geodesic loops $\gamma_1,\gamma_2,\gamma_3$ such that when we slide them along the fixed ray $\alpha$ to get $\gamma_{r,1}$, $\gamma_{r,2}$, $\gamma_{r,3}$ based at $\alpha(r)$, their length
 $L_j(r):=L(\gamma_{r,j})=L_{\infty,j}+O(r^{-\epsilon})$ and their holonomy satisfy $|\mathrm{hol-Id}|=O(r^{-1-\epsilon})$.
 What is more, any loop based at $\alpha(r)$ with length smaller than $\kappa r$ is generated by $\gamma_{r,1}$,$\gamma_{r,2}$ and $\gamma_{r,3}$ in the sense of Gromov.
 \label{1d-tangent-cone}
 \end{theorem}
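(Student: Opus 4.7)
First, I would mirror the proofs of Theorems \ref{3d-tangent-cone} and \ref{2d-tangent-cone}. Since $C(S(\infty))=\mathbb{R}_+$ in this case by Theorem \ref{classify-tangent-cone}, the rescaled balls $(B_{\kappa r}(\alpha(r)), r^{-2}g)$ Gromov--Hausdorff converge to a ball in $\mathbb{R}_+$ and, by the argument in the proof of Theorem \ref{local-tangent-cone}, the local groups of covering transforms $G_r$ acting on $B_\kappa\subset T_{\alpha(r)}M$ converge to a three-dimensional free abelian translation subgroup $G\cong\mathbb{R}^3$ of $\mathrm{Iso}(\mathbb{R}^4)$. Theorem \ref{generator}, together with the remark that follows it, then supplies, for all $r$ large enough, three generators $g_{r,1},g_{r,2},g_{r,3}$ of $G_r\cap B_\kappa(\mathrm{Id})$ whose logarithms have pairwise angles bounded below independently of $r$, corresponding to three geodesic loops $\gamma_{r,1},\gamma_{r,2},\gamma_{r,3}$ based at $\alpha(r)$; automatically, every loop based at $\alpha(r)$ of length less than $\kappa r$ is their Gromov product.

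Second, I handle the $\mathrm{GL}(3,\mathbb{Z})$ ambiguity in the choice of generators and obtain the basic holonomy estimate. Exactly as in Theorem \ref{2d-tangent-cone}, I remove the ambiguity by requiring each $\gamma_{r,j}$ to be the sliding along $\alpha$ of a single fixed loop. Writing the corresponding local covering transforms as $T_j(\mathbf{x})=\mathbf{a}_j\mathbf{x}+\mathbf{b}_j$, I apply the commutator estimate from the proof of Theorem \ref{2d-tangent-cone} to each pair $(T_j,T_k)$ with $j\neq k$. Since the three pairs $(\mathbf{a}_j-\mathrm{Id},\mathbf{b}_j)$ form detectable mutual angles, the inequality
\[
|\mathbf{a}_j-\mathrm{Id}|\,|\mathbf{a}_k-\mathrm{Id}|\le C\,|\mathbf{a}_j^{-1}\mathbf{a}_k^{-1}\mathbf{a}_j\mathbf{a}_k-\mathrm{Id}|\le Cr^{-2-\epsilon}|\mathbf{b}_j|\,|\mathbf{b}_k|
\]
together with the detectable-angle contradiction run for each index yields $|\mathrm{hol-Id}|(\gamma_{r,j})\le r^{-1-\epsilon/3}L_j(r)$ for $j=1,2,3$.

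Third, I bootstrap as in Theorem \ref{3d-tangent-cone}. Feeding this bound into Proposition \ref{fundamental-equation} and iterating a finite number of times yields $L_j(r)=L_{\infty,j}+O(r^{-\epsilon})$ and $|\mathrm{hol-Id}|(\gamma_{r,j})=O(r^{-1-\epsilon})$. To verify $L_{\infty,j}>0$, I run the same auxiliary ODE comparison used at the end of Theorem \ref{3d-tangent-cone}: if some $L_{\infty,j}=0$, further iteration produces $L_j=O(r^{-1-\epsilon})$ and $|\mathrm{hol-Id}|=O(r^{-2-2\epsilon})$, which pressed against the comparison solution $f$ of $f''(r)=(r/2)^{-2-\epsilon}f(r)$ from Theorem \ref{3d-tangent-cone} forces $L_j\equiv 0$, a contradiction.

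The main obstacle is the non-compact $\mathrm{GL}(3,\mathbb{Z})$ ambiguity in the choice of generators, which a priori prevents a direct comparison of $\gamma_{r,j}$ at different radii, together with the fact that the generators $\gamma_{r,j}$ need only commute asymptotically rather than exactly. Once the first issue is circumvented by demanding that each $\gamma_{r,j}$ arise from sliding a fixed loop along $\alpha$, the second issue is absorbed into the commutator estimate, and the three-dimensional case reduces mechanically to pairwise repetitions of the two-dimensional argument; the ODE bootstrap then goes through unchanged.
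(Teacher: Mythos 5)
There is a gap, and it sits exactly where the paper's proof locates the new difficulty. Your second step applies the commutator estimate
$|\mathbf{a}_j-\mathrm{Id}|\,|\mathbf{a}_k-\mathrm{Id}|\le C|\mathbf{a}_j^{-1}\mathbf{a}_k^{-1}\mathbf{a}_j\mathbf{a}_k-\mathrm{Id}|\le Cr^{-2-\epsilon}|\mathbf{b}_j||\mathbf{b}_k|$
to \emph{every} pair $j\neq k$. But the right-hand bound is the Buser--Karcher error term for the rotation part of a \emph{trivial} commutator; it is only valid when $T_j$ and $T_k$ actually commute, so that the flat-case commutator vanishes and only the curvature error remains. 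By Theorem \ref{generator}, $g_{r,1}$ commutes with $g_{r,2}$ and $g_{r,3}$, but the commutator of $g_{r,2}$ and $g_{r,3}$ is a priori a nontrivial power $g_{r,1}^m$. For the pair $(T_2,T_3)$ you therefore get an extra term of size $|m|\,|\mathbf{a}_1-\mathrm{Id}|$ on the right, and the detectable-angle contradiction does not close. Saying the approximate commutativity is ``absorbed into the commutator estimate'' is precisely the point that needs an argument, not an observation; the paper explicitly flags this as ``the only thing we need to prove.''

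The repair is to run the 2d argument only on the pairs $(T_1,T_2)$ and $(T_1,T_3)$, which do commute by the hierarchy in Theorem \ref{generator} (note the 2d estimate already yields $|\mathrm{hol}-\mathrm{Id}|\le r^{-1-\epsilon/3}L$ for \emph{both} members of a commuting, detectably-angled pair, so this covers $j=1,2,3$). Then bootstrap as in Theorem \ref{3d-tangent-cone} to obtain $L_j(r)=L_{\infty,j}+O(r^{-\epsilon})$ with $L_{\infty,j}>0$. Only now deduce commutativity of $\gamma_{r,2}$ and $\gamma_{r,3}$: their commutator is $\gamma_{r,1}^{m(r)}$, and its length is bounded by the Buser--Karcher estimate $Cr^{-2-\epsilon}L_2L_3(L_2+L_3)$ plus the flat translation part $C\,|\mathrm{hol}-\mathrm{Id}|\cdot L = O(r^{-1-\epsilon})$, both of which go to $0$ once $L_j=O(1)$; since $L_1(r)\to L_{\infty,1}>0$, the only power of $\gamma_{r,1}$ with length tending to $0$ is the identity, so $m(r)=0$ for $r$ large. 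This is the content of the paper's one-sentence proof, and it should replace your hand-wave about absorption. The rest of your outline (generator setup from Theorem \ref{generator}, removing the $\mathrm{GL}(3,\mathbb{Z})$ ambiguity by sliding, the ODE comparison to rule out $L_{\infty,j}=0$) matches the paper's route.
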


 \begin{proof}
 We can proceed exactly in the same way as Theorem \ref{2d-tangent-cone}. The only thing we need to prove is that $\gamma_{r,2}$ commutes with $\gamma_{r,3}$. It follows from the fact that the length of the commutator converge to 0 since the curvature and therefore the errors converge to 0 as $r$ goes to 0.
 \end{proof}

 \subsection{From geodesic loops to Riemannian fiberation}

 In \cite{CheegerFukayaGromov},  Cheeger, Fukaya and Gromov first introduced the N-structure i.e. nilpotent group fiberations of different dimensions patched together consistently. (Torus is the simplest nilpotent group.)
 In \cite{MinerbeAsymptotic}, Minerbe followed their method and improved the result for circle fiberations under a strong volume growth condition in ALF case.
 In their papers they all view $\mathbb{R}^{4-k}\times\mathbb{T}^k$ as the Gromov-Hausdorff approximation of $\mathbb{R}^{4-k}$.
 In this subsection, we also include the $\mathbb{T}^k$ factor in the analysis.  Therefore, we are able to obtain a better estimate without any volume assumptions.

 In the last subsection, we get geodesic loops $\gamma_{p,i}$ along a ray. They can be represented by $s\in[0,1]\rightarrow \exp_p(sv_i(p))$ for some vectors $v_i(p)$ in the tangent space of the base point $p$. When $p$ goes to infinity, the vectors $v_i(p)$ converge to some limits $v_i\in \mathbb{R}^4$. Actually, the difference between $v_i(p)$ and $v_i$ is bounded by $O(r^{-\epsilon})$. Define the lattice $\Lambda$ by $\Lambda=\oplus_{i=1}^{k}\mathbb{Z}v_i$ and the torus $\mathbb{T}^k=(\oplus_{i=1}^{k}\mathbb{R}v_i)/\Lambda$ with the induced metric. From the estimates in the last subsection and the estimates in the last paragraph of Section 2.1 (c.f. Proposition 2.3.1 of \cite{BuserKarcher}), it is easy to see that for $\sum_{i=1}^{k}a_iv_i\in\Lambda\cap B_{\kappa r(p)}$, the translation part of the Gromov product $\prod_{i=1}^{k}\gamma_{p,i}^{a_i}$ is $\sum_{i=1}^{k}a_iv_i$ with error bounded by $O(r^{1-\epsilon})$ while the holonomy is bounded by $O(r^{-\epsilon})$. So the lattice $\Lambda$ almost represent the geodesic loops whose length is smaller than $\kappa r(p)$.

By Proposition \ref{fundamental-equation}, Corollary \ref{connecting-points-with-ray}, the estimates in Theorem \ref{3d-tangent-cone}, Theorem \ref{2d-tangent-cone} or Theorem \ref{1d-tangent-cone}, we can slide the geodesic loops $\gamma_{p,i}$ along a path within $B_{1.1r}(o)\setminus B_{0.9r}(o)$ to get geodesic loops $\gamma_{p,i}$ over the whole manifold $M$ except a compact set $K$. It satisfies all the above properties. The choice of path is not unique, so after sliding along different paths, $\gamma_{p,i}$ may be different. However, all the differences come from a change of basis in $\Lambda$. Locally, we can assume that $\gamma_{p,i}$ are well defined.

  \begin{theorem}
 We can find a diffeomorphism from $B_{\kappa r}(p)$ to $B_{\kappa r}(0)\times \mathbb{T}^k\subset\mathbb{R}^{4-k}\times \mathbb{T}^{k}$,
 such that $g$ = the pull back of the flat metric + $O'(r^{-\epsilon}).$
 \label{local-fiberation}
 \end{theorem}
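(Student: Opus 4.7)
The plan is to lift to the exponential cover at $p$, where Lemma~\ref{local-geometry} forces the metric to be $C^m$-close to flat, replace the local deck action by the exact $\Lambda_p$-translation action through a small equivariant perturbation of the identity, and descend to the quotient.

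First, I would set up the lift. Identify $T_pM\cong\mathbb{R}^4=\mathbb{R}^{4-k}\oplus\mathbb{R}^k$ by a linear isometry sending each $v_i(p)$ into the second factor, and form $\Lambda_p=\bigoplus_{i=1}^k\mathbb{Z}v_i(p)$. Since the conjugate radius at $p$ is of order $r^{1+\epsilon/2}\gg\kappa r$, the exponential map $\exp_p$ is a local Riemannian covering of a neighbourhood of $B_{\kappa r}(p)$ by an open subset $\widetilde B\subset T_pM$ that contains $B_{\kappa r}(0)$ together with a buffer of thickness $\max_i|v_i(p)|$, and Lemma~\ref{local-geometry} (applied after rescaling by $\kappa r$) gives $\exp_p^{*}g=g_{\mathrm{Euc}}+O'(r^{-\epsilon})$. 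The deck group is generated by the transformations $F_{e_1},\ldots,F_{e_k}$ coming from the basis loops $\gamma_{p,i}$, and is abelian by the commutativity of these loops from Theorems~\ref{3d-tangent-cone}--\ref{1d-tangent-cone}; since $\exp_p(v_i(p))=p$ by construction, each generator has the explicit form
$$F_{e_i}(x)=v_i(p)+A_i x+O'(r^{-\epsilon}),$$
where $A_i$ is the holonomy of $\gamma_{p,i}$ satisfying $|A_i-\mathrm{Id}|=O(r^{-1-\epsilon})$ by those same theorems. Setting $\varrho_i(x)=F_{e_i}(x)-x-v_i(p)$, we obtain $\varrho_i=O'(r^{-\epsilon})$ uniformly on $\widetilde B$.

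Second, I would straighten the deck action. I look for a diffeomorphism $\Phi=\mathrm{Id}+\phi$ from a neighbourhood of $B_{\kappa r}(0)\subset\mathbb{R}^4$ into $\widetilde B$, intertwining the exact $\Lambda_p$-translation action on the source with $\{F_a\}_{a\in\Lambda_p}$ on the target; generator by generator this is the equation $\phi(y+v_i(p))-\phi(y)=\varrho_i(y+\phi(y))$. Its linearisation demands $\{\varrho_i\}$ to be a cocycle, which holds modulo $O'(r^{-2\epsilon})$ by direct expansion of $F_{e_i}F_{e_j}=F_{e_j}F_{e_i}$ together with the quadratic-in-length Buser--Karcher error recalled at the end of Section~3.1. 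Starting from $\phi\equiv 0$ on a smoothly cut-off fundamental domain for $\Lambda_p$, propagating by the required equivariance to neighbouring fundamental domains with a standard partition of unity, and iterating finitely many times to absorb the successively smaller cocycle defects, yields the desired $\phi=O'(r^{-\epsilon})$. Dividing $\Phi$ by $\Lambda_p$ produces a diffeomorphism from $B_{\kappa r}(0)_{\mathbb{R}^4}/\Lambda_p$ onto $B_{\kappa r}(p)$; straightening the round-ball shape to the product $B_{\kappa r}(0)_{\mathbb{R}^{4-k}}\times(\mathbb{R}^k/\Lambda_p)$ by an adjustment supported in a collar of constant width (introducing at most an $O(r^{-2})$ distortion of the metric), and finally identifying $\mathbb{R}^k/\Lambda_p$ with $\mathbb{T}^k=\mathbb{R}^k/\Lambda$ through the linear map $v_i(p)\mapsto v_i$ (which is $O(r^{-\epsilon})$-close to the identity because $|v_i(p)-v_i|=O(r^{-\epsilon})$), lands in the target $B_{\kappa r}(0)\times\mathbb{T}^k$. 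Since each of $\exp_p^{*}g-g_{\mathrm{Euc}}$, $\Phi-\mathrm{Id}$, and the two subsequent adjustments is of size $O'(r^{-\epsilon})$, the pulled-back flat metric differs from $g$ by $O'(r^{-\epsilon})$, as required.

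The main obstacle is the production of $\Phi$ with $\phi=O'(r^{-\epsilon})$ in every $C^m$ norm simultaneously: the $\varrho_i$'s only satisfy the cocycle identity approximately, so the propagation across the $O(r^k)$ lattice interfaces combined with the finite iteration must be shown to preserve the $C^m$ bounds without blowing up. The curvature decay $|\mathrm{Rm}|\le r^{-2-\epsilon}$, acting through the Buser--Karcher length-squared commutator bound, is just strong enough to keep all of these error sources within the $O'(r^{-\epsilon})$ budget.
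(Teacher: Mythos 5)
Your approach is genuinely different from the paper's, and the difference matters. The paper never attempts to straighten the deck action into an exact lattice action; instead it defines the chart map directly on $M$ by an explicit weighted average over the deck orbit of a single preimage $q_0$,
$$
f(q)=\pi_{\mathbb{T}^k}\,
\frac{\displaystyle\sum_{a}\chi\!\left(\tfrac{10\,|\prod_i F_i^{a_i}(q_0)|}{\kappa r}\right)\Bigl(\prod_i F_i^{a_i}(q_0)-\sum_i a_i v_i\Bigr)}
{\displaystyle\sum_{a}\chi\!\left(\tfrac{10\,|\prod_i F_i^{a_i}(q_0)|}{\kappa r}\right)}\in\mathbb{R}^{4-k}\times\mathbb{T}^k .
$$
Replacing $q_0$ by another preimage shifts every term by a fixed lattice vector, which $\pi_{\mathbb{T}^k}$ annihilates, so well-definedness on $M$ is built in, and the $O'(r^{-\epsilon})$ bound drops out term by term from $\prod_i F_i^{a_i}(q_0)=q_0+\sum a_iv_i+O'(r^{1-\epsilon})$ together with the cutoff. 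No functional equation is solved; the average is, in effect, the standard mechanism for upgrading an approximate equivariant map to an exact one in a single step.

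Your proposal instead rests on producing an explicit conjugacy $\Phi=\mathrm{Id}+\phi$ solving $\phi(y+v_i(p))-\phi(y)=\varrho_i(y+\phi(y))$, and this is where the argument has a genuine gap. The construction you sketch --- declare $\phi\equiv0$ on a fundamental domain, propagate by the equation to neighbouring domains, smooth with a partition of unity, then ``iterate finitely many times'' --- is not a proof. After smoothing with a partition of unity the intertwining relation is destroyed, and you supply no contraction estimate showing that the residual cocycle defect, together with its $C^m$ norms, actually decreases across successive passes, nor a reason why finitely many passes suffice. (Note also that the $F_{e_i}$ commute \emph{exactly} once the $\gamma_{p,i}$ have been chosen as in Theorems~\ref{3d-tangent-cone}--\ref{1d-tangent-cone}; the defect you call the ``cocycle error'' is the failure of the linearised cocycle identity, not a failure of commutativity, and controlling it requires precisely the derivative bookkeeping you defer.) Finally, you rescale by $\kappa r$ when invoking Lemma~\ref{local-geometry} but then quote bounds on $\varrho_i$ and $\phi$ at the original scale without tracking how the rescaling interacts with the $v_i(p)$-translates; this needs to be made consistent. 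All of these issues evaporate in the paper's averaging formula, which is why its proof of Theorem~\ref{local-fiberation} is essentially one line.
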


 \begin{proof}
 First of all we look at the map $\exp:\mathrm{T}_p\rightarrow M$. Any $q\in B_{\kappa r}(p)$ has lots of preimages. Choose one preimage $q_0$, then all the other preimages are $\prod_{i=1}^{k} F_i^{a_i}(q_0)$, where $F_i$ are the covering transforms corresponding to $\gamma_i$, and $a_i$ are integers. We know that $\prod_{i=1}^{k} F_i^{a_i}(q_0)$ is actually $q_0+\sum_{i=1}^{k}a_iv_i$ with error in $O'(r^{1-\epsilon})$.
 Define
 $$f(q)=\pi_{\mathbb{T}^k}\frac{\sum \chi(\frac{10|\prod_{i=1}^{k} F_i^{a_i}(q_0)|}{\kappa r(p)})(\prod_{i=1}^{k} F_i^{a_i}(q_0)-\sum_{i=1}^{k}a_iv_i)}{\sum \chi(\frac{10|\prod_{i=1}^{k} F_i^{a_i}(q_0)|}{\kappa r(p)})}\in \mathbb{R}^{4-k}\times \mathbb{T}^{k},$$
 then it is independent of the choice of $q_0$. Roughly speaking, $f(q)$ is the weighted average of the projections of all the preimages of $q_0$ to $\mathbb{R}^{4-k}\times \mathbb{T}^{k}$. It is easy to prove that using $f$, the metric $g$= the pull back of the flat metric + $O'(r^{-\epsilon}).$
 \end{proof}

 \begin{lemma}
 We can find good covers $\{B_{\frac{1}{2}\kappa r(p_i)}(p_i)\}_{i\in I}$ such that $I$ can be divided into $I=I_1\cup...\cup I_N$, and if $i,j\in I_l$, $l=1,2,...,N$, $B_{\kappa r(p_i)}(p_i)\cap B_{\kappa r(p_j)}(p_j)=\emptyset$.
 \label{good-cover}
 \end{lemma}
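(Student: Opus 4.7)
The plan is to first build the cover via a Vitali-type maximal packing and then obtain the partition $I=I_1\cup\cdots\cup I_N$ by greedy vertex-coloring of the intersection graph of the doubled balls. The one nontrivial input beyond elementary packing is a uniform upper bound on the degree of that graph, which is supplied by Theorem~\ref{local-fiberation}.

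\emph{Cover.} Take $\kappa$ small enough that Theorem~\ref{local-fiberation} applies on every $B_{3\kappa r(p)}(p)$ with $r(p)$ large, fix a large $R_0$, let $K=\overline{B_{R_0}(o)}$, and select a maximal collection $\{p_i\}_{i\in I}\subset\{r\ge R_0\}$ such that the balls $B_{\frac{1}{10}\kappa r(p_i)}(p_i)$ are pairwise disjoint. For any $q$ with $r(q)\ge R_0$, maximality provides some $i$ with $B_{\frac{1}{10}\kappa r(p_i)}(p_i)\cap B_{\frac{1}{10}\kappa r(q)}(q)\ne\emptyset$; combining $d(p_i,q)\le\frac{1}{10}\kappa(r(p_i)+r(q))$ with $|r(p_i)-r(q)|\le d(p_i,q)$ gives $r(q)\le r(p_i)\cdot\frac{10+\kappa}{10-\kappa}$ and hence $q\in B_{\frac{1}{2}\kappa r(p_i)}(p_i)$ for $\kappa$ small. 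So $\{B_{\frac{1}{2}\kappa r(p_i)}(p_i)\}_{i\in I}$ covers $M\setminus K$.

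\emph{Bounded multiplicity and coloring.} Define the graph $\mathcal{G}$ on $I$ by $i\sim j$ iff $B_{\kappa r(p_i)}(p_i)\cap B_{\kappa r(p_j)}(p_j)\ne\emptyset$. The same triangle-inequality argument shows that $r(p_i)$ and $r(p_j)$ are comparable (within a factor $(1+\kappa)/(1-\kappa)$) whenever $i\sim j$, so for $\kappa$ small all neighbors $j$ of a fixed $i$ satisfy $B_{\frac{1}{10}\kappa r(p_j)}(p_j)\subset B_{3\kappa r(p_i)}(p_i)$. By Theorem~\ref{local-fiberation}, the metric on $B_{3\kappa r(p_i)}(p_i)$ is $O'(r^{-\epsilon})$-close to a flat product $\mathbb{R}^{4-k}\times\mathbb{T}^k$, so the volumes of $B_{3\kappa r(p_i)}(p_i)$ and of each $B_{\frac{1}{10}\kappa r(p_j)}(p_j)$ are, up to a bounded factor, the corresponding volumes in the flat model. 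The Euclidean-times-torus packing then caps the number of disjoint small balls, hence the degree of $\mathcal{G}$, by a universal constant $N_0-1$. A greedy vertex-coloring partitions $I$ into at most $N=N_0$ color classes $I_1,\ldots,I_N$, and within each class the doubled balls are pairwise disjoint by construction.

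\emph{Main obstacle.} The delicate step is the uniform volume-packing bound: on a collapsing end the injectivity radius can decay, so a priori the volumes involved could scale non-uniformly and allow unbounded multiplicity. What saves the argument is precisely the uniformity of Theorem~\ref{local-fiberation}, which supplies a single flat model $\mathbb{R}^{4-k}\times\mathbb{T}^k$ of fixed fiber dimension and asymptotically fixed fiber size across the entire cluster of neighbors of $p_i$; the packing then reduces to a Euclidean one.
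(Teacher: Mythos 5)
Your construction and conclusion are correct, but you are using heavier machinery than the paper does for the multiplicity bound. The paper's proof, though terse, is essentially: take maximal $\kappa 2^{l-1}$-nets in the dyadic annuli $B(2^{l+1})\setminus B(2^l)$ (equivalent to your Vitali-type selection at scale proportional to distance), and then obtain the bounded-degree/coloring conclusion by \emph{Bishop--Gromov volume comparison}, using only that $M$ is Ricci-flat. The point you flag as the ``main obstacle'' --- that collapsing could distort the volumes --- is not actually an issue for Bishop--Gromov: it controls the ratio $\mathrm{Vol}(B_s(p))/\mathrm{Vol}(B_t(p))$ at a \emph{fixed} center $p$, so the absolute scale of the (possibly collapsed) volume cancels, and one gets a universal upper bound on how many disjoint balls $B_{\frac{1}{10}\kappa r(p_j)}(p_j)$ with comparable radii can sit inside $B_{C\kappa r(p_i)}(p_i)$. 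By contrast, you route the packing estimate through Theorem~\ref{local-fiberation}, i.e.\ through the detailed structure of the end. This works (Theorem~\ref{local-fiberation} precedes this lemma so there is no circularity, and the $\mathbb{R}^{4-k}\times\mathbb{T}^k$ model does supply a uniform packing constant once $\kappa r$ dominates the torus diameter), but it imports far more than is needed and would not generalize to settings where one has Ricci lower bounds but no such fibration. What your version buys is a more concrete picture of where the constant $N$ comes from; what the paper's route buys is elementarity and generality. One small slip worth fixing: for $i\sim j$ you want the small balls $B_{\frac{1}{10}\kappa r(p_j)}(p_j)$ contained in, say, $B_{4\kappa r(p_i)}(p_i)$ rather than $B_{3\kappa r(p_i)}(p_i)$, since $d(p_i,p_j)$ alone can already reach nearly $3\kappa r(p_i)$ before the small radius is added; this changes nothing essential.
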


 \begin{proof}
 This kind of theorem was first proved in \cite{CheegerFukayaGromov}. In our situation we can choose maximal $\kappa2^{l-1}$ nets in $B(2^{l+1})-B(2^l)$. Then volume comparison implies the property.
 \end{proof}

 \begin{theorem}
 Outside a compact set $K$, there is a global fiberation and a $\mathbb{T}^k$ invariant metric $\tilde g=g+O'(r^{-\epsilon})$ whose curvature belongs to $O'(r^{-2-\epsilon})$
 \label{fiberation}
 \end{theorem}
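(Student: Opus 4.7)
The plan is to glue the local fibrations of Theorem \ref{local-fiberation} along the good cover of Lemma \ref{good-cover} into a single $\mathbb{T}^k$-fibration $\pi\colon M\setminus K\to B$, and then to average $g$ fiberwise to produce the $\mathbb{T}^k$-invariant metric $\tilde g$.

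On an overlap $B_{\kappa r(p_i)/2}(p_i)\cap B_{\kappa r(p_j)/2}(p_j)$, the two local trivializations of Theorem \ref{local-fiberation} are built from local bases for the \emph{same} global lattice $\Lambda$ of short geodesic loops produced in Theorems \ref{3d-tangent-cone}--\ref{1d-tangent-cone} and slid over $M\setminus K$ as described in the paragraph preceding Theorem \ref{local-fiberation}. Different choices of connecting path change the basis only by an element of $\mathrm{GL}(k,\mathbb{Z})$, which acts by an isometry of the flat model $\mathbb{R}^{4-k}\times\mathbb{T}^k$. Therefore the transition maps between trivializations lie within $O'(r^{-\epsilon})$ of isometries of the model, and the local torus actions fit into a globally defined (possibly non-principal) $\mathbb{T}^k$-bundle on $M\setminus K$.

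Now define
\[
\tilde g(x) = \int_{\mathbb{T}^k} (\phi_t)^* g\,(x)\,d\mu(t),
\]
where $\phi_t$ is the global fiberwise action and $\mu$ is the Haar probability measure. The resulting $\tilde g$ is manifestly $\mathbb{T}^k$-invariant. On each local trivialization, $\phi_t$ is an isometry of the flat model while $g$ differs from the model by $O'(r^{-\epsilon})$, so $(\phi_t)^* g - g \in O'(r^{-\epsilon})$ uniformly in $t$, and hence $\tilde g - g \in O'(r^{-\epsilon})$. Since the curvature is algebraic in $g^{-1}$, $\partial g$ and $\partial^2 g$, and $|\mathrm{Rm}(g)|\in O(r^{-2-\epsilon})$ by (\ref{eq:faster than quadratic decay}), a perturbation of $g$ of size $O'(r^{-\epsilon})$ perturbs the curvature tensor by $O(r^{-2-\epsilon})$, yielding $\mathrm{Rm}(\tilde g)\in O'(r^{-2-\epsilon})$ as claimed.

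The main obstacle is the gluing step: one must certify that the local torus actions actually coincide on overlaps (not merely have the same orbits), and that the $\mathrm{GL}(k,\mathbb{Z})$ ambiguity---noncompact in the ALG and ALH-non-splitting cases---does not break the $O'(r^{-\epsilon})$ estimates. Selecting the basis at every $p$ via sliding along a reference path back to the fixed ray $\alpha$ (as in the construction of the $\gamma_{p,i}$ before Theorem \ref{local-fiberation}) resolves the ambiguity, reducing the transitions to maps that preserve the flat model exactly on the model side and only contribute $O'(r^{-\epsilon})$ on the error side, so that the averaging and perturbative estimates pass through the patching cleanly using the bounded multiplicity $N$ of the cover.
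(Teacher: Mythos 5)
Your averaging formula $\tilde g = \int_{\mathbb{T}^k}(\phi_t)^*g\,d\mu(t)$ presupposes a globally defined smooth $\mathbb{T}^k$-action $\phi_t$ on $M\setminus K$, but that action is precisely what needs to be constructed, and the construction is the real content of the theorem. Theorem~\ref{local-fiberation} gives on each ball $B_{\kappa r(p_i)}(p_i)$ a diffeomorphism to a piece of $\mathbb{R}^{4-k}\times\mathbb{T}^k$, hence a \emph{local} torus action, but the transition $f_{ij}=f_j\circ f_i^{-1}$ between two such charts is not a bundle map: even after fixing the basis of $\Lambda$ consistently via sliding along paths back to the ray $\alpha$ (which indeed kills the $\mathrm{GL}(k,\mathbb{Z})$ ambiguity), the two local torus actions disagree by $O'(r^{-\epsilon})$ errors, not by an element of $\mathbb{T}^k$. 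Saying the transitions "lie within $O'(r^{-\epsilon})$ of isometries of the model" and therefore "the local torus actions fit into a globally defined $\mathbb{T}^k$-bundle" is a non sequitur; an approximate compatibility does not automatically promote to an exact one. You flag this issue as "the main obstacle," but the resolution you propose (choosing bases consistently) removes only the discrete ambiguity, not the $O'(r^{-\epsilon})$ mismatch of the actions themselves.

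The paper's proof fills exactly this gap with a deformation/patching argument in the spirit of Cheeger--Fukaya--Gromov and Minerbe. Using the good cover of Lemma~\ref{good-cover}, it iteratively ($N$ rounds, one per color class $I_l$) replaces each transition $f_{ij}=(f^1_{ij},f^2_{ij})$ by a genuine bundle map $\tilde f_{ij}(q,\theta)=\bigl(\tilde f^1_{ij}(q),\,\theta+f^2_{ij}(q,0)+\tilde f^2_{ij}(q)\bigr)$: $\tilde f^1_{ij}$ is the $\mathbb{T}^k$-average of $f^1_{ij}$, and $\tilde f^2_{ij}$ is obtained by observing that $f^2_{ij}(q,\theta)-f^2_{ij}(q,0)-\theta$ is $O(r^{-\epsilon})$ in $\mathbb{T}^k$, lifting it to $\mathbb{R}^k$, averaging in $\theta$, and projecting back. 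The resulting $\tilde f_{ij}$ genuinely intertwines the torus actions, and $|\nabla^m\tilde f_{ij}|=O(r^{1-m-\epsilon})$. Only then is the $\mathbb{T}^k$-invariant metric built as a partition-of-unity combination of the flat model metrics $\chi_i g_i^{\mathrm{Flat}}+\chi_j g_j^{\mathrm{Flat}}$, and the maps from $M$ into the glued chart are reconciled by taking $\tilde g$-geodesic interpolation between $\tilde f_{ij}\circ f_i$ and $f_j$. Without this step, you do not have a global fibration to average over, and the rest of your argument cannot get started. (Your final curvature estimate, $\mathrm{Rm}(\tilde g)\in O'(r^{-2-\epsilon})$ from $\tilde g-g\in O'(r^{-\epsilon})$, is fine once the earlier steps are repaired: the point is that the $O'$ notation already encodes the two-derivative loss, so the algebraic expression for curvature inherits $O'(r^{-2-\epsilon})$.)
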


 \begin{proof}
 By Lemma \ref{good-cover}, we can first modify $i\in I_1$ and $j\in I_2$ so that they are compatible. Then modify $i,j\in I_1,I_2$ and $l\in I_3$ to make sure that they are compatible. After $N$ times, we are done.
 So we start from a map $$f_{ij}:B_{\kappa r_i}(p_i)\times \mathbb{T}^k\rightarrow B_{\kappa r_j}(p_j)\times \mathbb{T}^k.$$
 $$f_{ij}(q,\theta)=(f_{ij}^1(q,\theta),f_{ij}^2(q,\theta))=f_j\circ f_i^{-1}(q,\theta).$$
 Average it and get $\tilde f^1_{ij}:B_{\kappa r_i}(p_i)\rightarrow B_{\kappa r_j}(p_j)$ by
 $$\tilde f_{ij}(q)=\frac{1}{\mathrm{Vol}(\mathbb{T}^k)}\int_{\mathbb{T}^k}f^1_{ij}(q,\theta)\mathrm{d}\theta.$$
 From the higher derivative control, we know that the distance from origin to
 $f^2_{ij}(q,\theta)-f^2_{ij}(q,0)-\theta\in\mathbb{T}^k$ is bounded by $O(r^{-\epsilon})$. (Here we view $\mathbb{T}^k$ as an abelian group.) For $r$ large enough, we can lift it to $\mathbb{R}^k$ while keeping it bounded by $O(r^{-\epsilon})$. Fix $q$ and average it with respect to $\theta$, then project it back to $\mathbb{T}^k$. We get a map $\tilde f^2_{ij}:B_{\kappa r_i}(p_i)\rightarrow \mathbb{T}^k$. Define $\tilde f_{ij}:B_{\kappa r_i}(p_i)\times \mathbb{T}^k\rightarrow B_{\kappa r_j}(p_j)\times \mathbb{T}^k$ by
 $$\tilde f_{ij}(q,\theta)=(\tilde f^1_{ij}(q),\theta+f^2_{ij}(q,0)+\tilde f^2_{ij}(q)).$$
 It is easy to see that $|\nabla^m \tilde f_{ij}|=O(r^{1-m-\epsilon})$.
 We may glue the common part using $\tilde f_{ij}$. Now there are two metrics $g_i^{\mathrm{Flat}}$ and $g_j^{\mathrm{Flat}}$. Choose a partition of unity $\chi_i+\chi_j=1$, $|\nabla^m\chi_i|=O(r^{-m})$. Let
 $\tilde g=\chi_ig_i^{\mathrm{Flat}}+\chi_jg_j^{\mathrm{Flat}}$. It is a $\mathbb{T}^k$ invariant metric with $|\nabla^m \tilde g|=O(r^{-m-\epsilon})$. Note that there are still two maps from $M$ to the gluing $B_{\kappa r_i}(p_i)\times \mathbb{T}^k\cup_{\tilde f_{ij}}B_{\kappa r_j}(p_j)\times \mathbb{T}^k$: $\tilde f_{ij}\circ f_i$ and $f_j$. However, their distance is bounded by $O(r^{-\epsilon})$. For $r$ large enough, we can find out the unique $\tilde g$-minimal geodesic $\gamma$ satisfying $\gamma(0)=\tilde f_{ij}\circ f_i$ and $\gamma(1)=f_j$. Then $\gamma(\chi_j)$ gives a new map from $M$ to $B_{\kappa r_i}(p_i)\times \mathbb{T}^k\cup_{\tilde f_{ij}}B_{\kappa r_j}(p_j)\times \mathbb{T}^k$. Call that $\tilde f_i\cup \tilde f_j$.

 In conclusion, we have a $\mathbb{T}^k$-invariant metric $h$ on $$B_{\kappa r_i}(p_i)\times \mathbb{T}^k\cup_{\tilde f_{ij}}B_{\kappa r_j}(p_j)\times \mathbb{T}^k$$ and $$\tilde f_i\cup \tilde f_j:M\rightarrow B_{\kappa r_i}(p_i)\times \mathbb{T}^k\cup_{\tilde f_{ij}}B_{\kappa r_j}(p_j)\times \mathbb{T}^k$$
 with both $|\nabla^m h|=O(r^{-m-\epsilon})$ and $|\nabla^m(\tilde f_i\cup \tilde f_j)|=O(r^{1-m-\epsilon})$.

 After repeating everything for $(B_{\kappa r_i}(p_i)\times \mathbb{T}^k\cup_{\tilde f_{ij}}B_{\kappa r_j}(p_j)\times \mathbb{T}^k,\tilde g,\tilde f_i\cup \tilde f_j)$ and $(B_{\kappa r_l}(p_l)\times \mathbb{T}^k,g^{\mathrm{flat}}_l,f_l)$, we can get a new big chart. After $N$ times, we are done.
 \end{proof}

 \begin{theorem}
 Outside $K$, there is a $\mathbb{T}^k$-fiberation $E$ over $C(S(\infty))-B_R$ and a standard $\mathbb{T}^k$ invariant metric $h$ such that after the pull back by some diffeomorphism $h=g+O'(r^{-\epsilon})$.
 \label{standard-fiberation}
 \end{theorem}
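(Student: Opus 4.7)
The plan is to identify the fibration $(E, \tilde g)$ constructed in Theorem \ref{fiberation} with one of the standard models listed in the Example of Section 2, matching the case against Theorem \ref{classify-tangent-cone}. Since $\tilde g$ is $\mathbb{T}^k$-invariant with curvature in $O'(r^{-2-\epsilon})$, passing to the orbit space $E/\mathbb{T}^k$ yields a Riemannian submersion whose base, via the O'Neill formulas, is a complete flat manifold with tangent cone $C(S(\infty))$ at infinity. Combined with $\tilde g = g + O'(r^{-\epsilon})$, this forces the base (outside a compact set) to be diffeomorphic to $C(S(\infty)) - B_R$ equipped with the flat cone metric up to an error in $O'(r^{-\epsilon})$. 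The dimension count in Theorems \ref{3d-tangent-cone}, \ref{2d-tangent-cone} and \ref{1d-tangent-cone} guarantees that in each case $k$ plus the dimension of $C(S(\infty))$ equals $4$.

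I would then split into cases following Theorem \ref{classify-tangent-cone}. In the ALE case ($k=0$) there is no torus action and $E = \mathbb{R}^4/\Gamma - B_R$ with curvature decay, so Kronheimer's list (invoked through Theorem \ref{ALE}) identifies it with the flat model. In the ALH-non-splitting case, the $\mathbb{T}^3$-bundle over $[R,\infty)$ is topologically trivial, and since both base and fiber are flat with no room for a non-trivial connection curvature under the decay $O'(r^{-2-\epsilon})$, the metric reduces after a gauge transformation to the flat product. The ALG case proceeds similarly: a flat $T^2$-bundle over $\mathbb{C}_\beta - B_R$ whose monodromy around the apex is constrained by the hyperk\"ahler condition (through the parallel holomorphic symplectic form $\omega^+$) to lie in the Kodaira-type list.

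The main content lies in the ALF case. Here the $S^1$-invariant hyperk\"ahler metric on an $S^1$-bundle over $\mathbb{R}^3 - B_R$ (resp.\ its $\mathbb{Z}_2$-quotient) admits the Gibbons--Hawking form $\tilde g = V^{-1}\eta^2 + V\, g_{\mathbb{R}^3}$ with $V$ a positive harmonic function and $d\eta = *_{\mathbb{R}^3} dV$. The decay $|\mathrm{Rm}| = O(r^{-2-\epsilon})$ together with approximate Ricci-flatness restricts $V$ to be asymptotic to $m + c/|x|$, giving either the flat product (when $c=0$) or Taub-NUT (when $c\neq 0$); in the ALF-$D_k$ case one passes to the double cover and quotients by the binary dihedral group $D_{4|e|}$ acting on $\mathbb{R}^3/\mathbb{Z}_2$.

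The main obstacle is to pass from the approximate hyperk\"ahler structure on $(E, \tilde g)$ to an exact standard model within $O'(r^{-\epsilon})$. Concretely, one must produce a diffeomorphism (that is, a gauge transformation of the bundle combined with a change of coordinates on the base) so that the harmonic function $V$ and connection 1-form $\eta$ emerging from $\tilde g$ match their explicit asymptotic forms to order $r^{-\epsilon}$ with control on all derivatives. This requires pairing the a priori curvature bound with linear elliptic estimates on the torus bundle, iterating to absorb the self-interaction error terms, and then integrating the resulting first-order data against the standard model. Once this identification is made in each case, stitching the local fibrations from Theorem \ref{local-fiberation} with the averaging procedure of Theorem \ref{fiberation} delivers the claimed global diffeomorphism $\Phi: E \to M \setminus K$.
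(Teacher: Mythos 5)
Your proposal misidentifies the main content of the proof and, as a result, proposes a much heavier machinery than the statement actually requires. The crucial point you miss is that the allowed error, $O'(r^{-\epsilon})$, is extremely generous: once $\tilde g$ from Theorem \ref{fiberation} is written in the block form
\[
\tilde g=\sum_{i,j}a_{ij}(x)\,\mathrm{d}x_i\otimes\mathrm{d}x_j
+\sum_{l}\Bigl(\mathrm{d}\theta_l+\sum_i\eta_{li}(x)\,\mathrm{d}x_i\Bigr)^2,
\]
the horizontal part $a_{ij}$ has curvature in $O'(r^{-2-\epsilon})$, so Bando--Kasue--Nakajima coordinates make $a_{ij}-\delta_{ij}=O'(r^{-\epsilon})$, and the local connection components $\eta_{li}$ are already $O'(r^{-1})$ by construction (and so are those of the explicit model metrics, e.g.\ Taub--NUT). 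Thus replacing $a_{ij}$ by $\delta_{ij}$ and $\eta_{li}$ by \emph{any} standard connection form of the same topological type keeps the metric within $O'(r^{-\epsilon})$ of $g$. The theorem therefore reduces to (a) a topological classification of $\mathbb{T}^k$-bundles over $C(S(\infty))-B_R$ and (b) exhibiting, for each topological type, one explicit standard $\mathbb{T}^k$-invariant metric. The bulk of the paper's proof is precisely this case-by-case classification (Euler class $e$ in ALF, a $\mathrm{GL}(2,\mathbb{Z})$-monodromy computation yielding Kodaira's list in ALG, triviality in ALH).

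By contrast, your proposed ``main obstacle''---matching $V$ and $\eta$ to the exact standard model by iterating elliptic estimates to absorb self-interaction errors---is not needed and would in fact be circular here: $\tilde g$ is only $\mathbb{T}^k$-invariant, not hyperk\"ahler, so the Gibbons--Hawking ansatz does not apply to it, and the precise normal form of $V$ and $\eta$ is a statement about a much finer asymptotic order than $O'(r^{-\epsilon})$. Similarly, your ALG step hand-waves the Kodaira list as a consequence of ``the hyperk\"ahler condition (through $\omega^+$)''; the paper instead derives it by a short linear-algebra argument: the monodromy is an element of $\mathrm{GL}(2,\mathbb{Z})$ that must simultaneously be conjugate to a complex rotation $e^{i\theta}$ preserving the lattice $\mathbb{Z}\oplus\mathbb{Z}\tau$, which forces $e^{i\theta}$ to satisfy one of $x^2+x+1=0$, $x^2-x+1=0$, $x^2+1=0$ (or $x=\pm1$), and hence $\beta\in\{1,\tfrac12,\tfrac16,\tfrac56,\tfrac14,\tfrac34,\tfrac13,\tfrac23\}$ with the corresponding $\tau$. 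Finally, your O'Neill argument gives decay of base curvature but by itself does not produce the coordinates at infinity in which the base metric is $\delta_{ij}+O'(r^{-\epsilon})$; you need to cite the BKN construction, which is the step the paper uses.

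In short: the missing idea is the reduction to a purely topological classification enabled by the generous $O'(r^{-\epsilon})$ tolerance and the fact that connection forms are already $O'(r^{-1})$, and without that idea the rest of your outline heads toward a harder problem (exact asymptotic normal form) that the theorem does not ask you to solve.
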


 \begin{proof}
 The metric $\tilde g$ can be written as $$\sum_{i,j=1}^{4-k}a_{ij}(x)\mathrm{d}x_i\otimes\mathrm{d}x_j+
 \sum_{l=1}^{k}(\mathrm{d}\theta_l+\sum_{i=1}^{4-k}\eta_{li}(x)\mathrm{d}x_i)^2.$$
 The curvature of $a_{ij}$ belongs to $O'(r^{-2-\epsilon})$.
 By the result of Bando, Kasue and Nakajima \cite{BandoKasueNakajima}, there is a coordinate at infinity such that the difference between $a_{ij}$ and the flat metric on $C(S(\infty))-B_R$ belongs to $O'(r^{-\epsilon})$.
 So we can assume that $a_{ij}=\delta_{ij}$ without changing the condition $g=\tilde g+O'(r^{-\epsilon})$. Similarly, we can also replace $\eta_{lj}(x)$ by any standard connection form. As long as $\eta_{lj}$ is still in $O'(r^{-\epsilon})$, we still have $h=g+O'(r^{-\epsilon})$. Therefore, we only need to classify the torus fiberations over $C(S(\infty))-B_R$ topologically and give it a good enough standard metric $h$.

 (ALF-$A_k$)When $S(\infty)=\mathbb{S}^2$, the circle fiberation must be orientable. It is determined by the Euler class $e$.

 When $e=0$, we have the trivial product $(\mathbb{R}^3-B_R)\times \mathbb{S}^1$ as our standard model.

 When $e=\pm1$, we have the Taub-NUT metric with mass $m\not=0$:
 Let $$M_+=(\{(x_1,x_2,x_3)|x_1^2+x_2^2+x_3^2\ge R^2\}-\{(0,0,x_3)|x_3<0\})\times \mathbb{S}^1,$$
 $$M_-=(\{(x_1,x_2,x_3)|x_1^2+x_2^2+x_3^2\ge R^2\}-\{(0,0,x_3)|x_3>0\})\times \mathbb{S}^1$$
 Identify $(x_1,x_2,x_3,\theta_+)$ in $M_+$ with $(x_1,x_2,x_3,\theta_-+\mathrm{sign}(m)\mathrm{arg}(x_1+ix_2))$ in $M_-$.
 We get a manifold $M$.

 Let $r=\sqrt{x_1^2+x_2^2+x_3^2}, V=1+\frac{2m}{r},$
 \[\begin{split}\eta&=4|m|\mathrm{d}\theta_++4m\frac{(x_3-r)(x_1\mathrm{d}x_2-x_2\mathrm{d}x_1)}{2(x_1^2+x_2^2)r}\\
 &=4|m|\mathrm{d}\theta_-+4m\frac{(x_3+r)(x_1\mathrm{d}x_2-x_2\mathrm{d}x_1)}{2(x_1^2+x_2^2)r}.
 \end{split}\]
 Then the Taub-NUT metric with mass $m$ outside the ball $B_R$($R>>|m|$) is
 $$\mathrm{d}s^2=V\mathrm{d}\mathbf{x}^2+V^{-1}\eta^2$$
 with $$\mathrm{d}x_1=I^*(V^{-1}\eta)=J^*\mathrm{d}x_2=K^*\mathrm{d}x_3.$$

 There are lots of different conventions in the literatures. We use the convention from \cite{LeBrun}, but we compute the explicit form of $\eta$ using the formulas in \cite{HithcinKarlhedLindstromRocek}. When $m>0$, LeBrun \cite{LeBrun} proved that $M$ can be smoothly extended inside $B_R$ and becomes biholomorphic to $\mathbb{C}^2$. For $m<0$, the metric is only defined outside $B_R$, but it is enough for our purpose.

 There is a natural $\mathbb{Z}_{|e|}$ action on Taub-NUT metric by $\theta_\pm\rightarrow \theta_\pm+2\pi/|e|$ for $e=\pm1, \pm2, ...$
 The quotient of the Taub-NUT metric with positive mass $m$ by $\mathbb{Z}_{|e|}$ has Euler class $e<0$,
 The quotient of the Taub-NUT metric with negative mass $m$ by $\mathbb{Z}_{|e|}$ has Euler class $e>0$. Notice that the mass parameter $m$ is essentially a scaling parameter. Only the sign of $m$ determines the topology.

 Usually, people let $k=-e-1$ and call that a standard ALF-$A_k$ metric.

 (ALF-$D_k$)When $$S(\infty)=\mathbb{RP}^2=\{(x_1,x_2,x_3)\in\mathbb{S}^3|x_3\ge 0\}/(\cos t,\sin t,0)\sim(-\cos t,-\sin t,0)$$ topologically,
 the fiberation is the trivial fiberation over the disc after identifying $(\cos t,\sin t,0,\theta)$ with $(\cos(t+\pi),\sin(t+\pi),0,f(t)-\theta)$. So $f(\pi)-f(0)=-2e\pi$. The integer $e$ determines the topological type.

 When $e=0$, we have the trivial product $(\mathbb{R}^3-B_R)\times \mathbb{S}^1$ after identifying $(\mathbf{x},\theta)$ with $(-\mathbf{x},-\theta)$ as our standard model.

 When $e$ is nonzero, it is the quotient of the Taub-NUT metric outside $B_R$ by the binary dihedral group $D_{4|e|}=\{\sigma,\tau|\sigma^{2|e|}=1,\sigma^{|e|}=\tau^2,\tau\sigma\tau^{-1}=\sigma^{-1}\}$ which acts by $\sigma(\mathbf{x},\theta_\pm)=\sigma(\mathbf{x},\theta_\pm+\pi/|e|)$ and  $\tau(\mathbf{x},\theta_+)=(-\mathbf{x},\theta_-=-\theta_+)$ from $M_+$ to $M_-$ with $\tau(\mathbf{x},\theta_-)=(-\mathbf{x},\theta_+=\pi-\theta_-)$ from $M_-$ to $M_+$.
 When the mass is positive, $e$ is negative. When the mass is negative, $e$ is positive.

 Usually, people let $k=-e+2$ and call that a standard ALF-$D_k$ metric.

 (ALG)When $S(\infty)=\mathbb{S}^1$, the topological type is determined by the monodramy. In other words, when we travel along $S(\infty)$, there is some rotation but the lattice $\Lambda=\mathbb{Z}|v_1|\oplus\mathbb{Z}\tau|v_1|$ is still invariant. So we have the equation
 \[
 \left( {\begin{array}{*{20}c}
    a & b  \\
    c & d   \\
 \end{array}} \right)
 \left( {\begin{array}{*{20}c}
    1   \\
    \tau \\
 \end{array}} \right)
 =
 \left( {\begin{array}{*{20}c}
    e^{i\theta} & 0  \\
    0 & e^{i\theta} \\
 \end{array}} \right)
 \left( {\begin{array}{*{20}c}
    1   \\
    \tau \\
 \end{array}} \right).
 \]
 for some
 \[
 \left( {\begin{array}{*{20}c}
    a & b  \\
    c & d   \\
 \end{array}} \right)\in \mathrm{GL}(2,\mathbb{Z}).
 \]
 So
 \[
 0=\det\left( {\begin{array}{*{20}c}
    a-e^{i\theta} & b  \\
    c & d-e^{i\theta} \\
 \end{array}} \right)=
 ad-bc-(a+d)e^{i\theta}+(e^{i\theta})^2.
 \]
 Except the case where $e^{i\theta}=\pm1$,we have $\Delta=(a+d)^2-4(ad-bc)<0$.
 So $ad-bc>0$, it must be 1 to make sure the matrix invertible. So $a+d=0$ or $a+d=\pm1$.
 The quadratic equation $e^{i \theta}$ satisfies must be one of the following
 $$x^2 + x + 1 = 0,\qquad x^2 - x + 1 =0,\qquad {\rm and} \qquad  x^2 + 1 = 0.$$
 We can solve $e^{i\theta}$ accordingly:

 $$\frac{-1\pm i \sqrt{3} }{2}  =  e^{i \frac{2\pi}{3}}, e^{i \frac{4\pi}{3}}; \qquad \frac{1\pm i \sqrt{3} }{2}  = e^{i \frac{\pi}{3}}, e^{i \frac{5\pi}{3}}; \qquad \pm i= e^{i \frac{\pi}{2}}, e^{ i \frac{3\pi}{2}}.$$

 Therefore, the rotation angle $\theta=2\pi\beta$ and the lattice $\Lambda=\mathbb{Z}|v_1|\oplus\mathbb{Z}\tau|v_1|$ are in the following list: (We may replace $\tau$ by something like $\tau-1$, but that will not change the lattice at all)

 (Regular) $\mathrm{Im}\tau>0$, $\beta=1$.

 (I$_0^*$) $\mathrm{Im}\tau>0$, $\beta=1/2$.

 (II) $\tau=e^{2\pi i/3}$, $\beta=1/6$.

 (II$^*$) $\tau=e^{2\pi i/3}$, $\beta=5/6$.

 (III) $\tau=i$, $\beta=1/4$.

 (III$^*$) $\tau=i$, $\beta=3/4$.

 (IV) $\tau=e^{2\pi i/3}$, $\beta=1/3$.

 (IV$^*$) $\tau=e^{2\pi i/3}$, $\beta=2/3$.

 Note that they all correspond to Kodaira's classification of special fibers of elliptic surface in \cite{Kodaira}!
 If we identify $(u,v)$ with $(e^{2\pi i \beta}u,e^{-2\pi i \beta}v)$ in the space $\{(u,v)|\mathrm{arg} u\in[0,2\pi\beta],|u|\ge R\}\subset(\mathbb{C}-B_R)\times \mathbb{C}/(\mathbb{Z}|v_1|\oplus\mathbb{Z}\tau|v_1|)$, we have the standard flat hyperk\"ahler metric $h=\frac{i}{2}(du\wedge d\bar u+dv\wedge d\bar v)$.
 Note that $\mathrm{SU}(2)$ is transitive, so we can choose the complex structure $a_1I+a_2J+a_3K$ properly so that $\bar\partial_g=\bar\partial_h+O(r^{-\epsilon})\nabla_h$.

 (ALH-non-splitting)When $C(S(\infty))=\mathbb{R}_+$, $h$ can be simply chosen to be the product metric of $[R,\infty)$ and a flat 3-torus.

 \end{proof}

\section{The construction of holomorphic functions }

 In this section, we prove Theorem 2 and Theorem 3. Our goal in this section is to construct global holomorphic functions on gravitational instantons $M$ with prescribed growth order. It is usually very hard to do so directly. However, it is much easier to construct holomorphic functions on the standard models $(E,h)$ first. Then it can be pulled back to $(M,g)$ and cut off to obtain an almost holomorphic function $f$ on $M$. To get rid of the error, we can solve the $\bar\partial$ equation
 $$\bar\partial g=\bar\partial f$$
 for $g$ much smaller than $f$. If successful, then $f-g$ will be the required function.
 Unfortunately, it is hard for us to solve $g$ directly. So instead, we solve the equation
 $$-(\bar\partial\bar\partial^*+\bar\partial^*\bar\partial)\phi=\bar\partial f.$$
 The order of $\bar\partial^*\phi$ and $\bar\partial\phi$ will be smaller than the order of $f$ if we solve $\phi$ properly.
 Notice that there is a covariant constant (0,2)-form $\omega^-$, so the harmonic (0,2)-form $\bar\partial\phi$ is essentially  a harmonic function. Generally speaking, the order of growth of harmonic functions on $M$ is the same as the harmonic functions on $E$. So if we get $f$ from the smallest nonconstant harmonic function on $E$, we expect $\bar\partial\phi$ to be 0. Therefore $f+\bar\partial^*\phi$ will be the required global holomorphic function on $M$.

 To solve the Laplacian equation for (0,1)-forms, we need some elliptic estimates. The ALH case requires more care.  To obtain a good estimate in ALH case, we need to prove the exponential decay of curvature first. This is feasible after we develop some elliptic estimates for the Riemannian curvature tensor.

 Therefore, in the first two subsections, we develop the elliptic estimates for tensors on a manifold $M$ asymptotic to the standard model. We would like to work on both forms and the curvature tensors on general $M$ which may not be hyperk\"ahler. Therefore, we always use the Bochner Laplacian $-\nabla^*\nabla$ in order to apply the Bochner techniques. For gravitational instantons, the Weitzenb\"ock formula tells us that the Bochner Laplacian equals to the operator $-(\bar\partial\bar\partial^*+\bar\partial^*\bar\partial)$ for functions and (0,1)-forms. Then in the third subsection, we use the mentioned technique to construct global holomorphic functions on ALF and ALG instantons. In the fourth subsection, we use this estimate to prove the exponential decay of curvature of ALH-instantons. This allows us to develop an elliptic estimate with exponential growth weights in the fifth subsection. In the sixth subsection, we use the same method to construct global holomorphic functions on ALH instantons.
 In the last two subsections, we make use of the global holomorphic functions to prove our second and third main theorems.

 Analysis in weighted Hilbert space is well studied and perhaps some estimates in this section are already known to experts \cite{Hormander} \cite{HauselHunsickerMazzeo} \cite{MinerbeMass}. However, to avoid problems caused by subtle differences between different settings, we instead give a self-contained proof.

\subsection{Weighted Hilbert space}
 In this subsection, we do some technical preparations.  We will use the following weighted Hilbert spaces: (Please notice the change of the meaning of $r$ as in the end of the second section.)

 \begin{definition}

 Define the ${L^2_{\delta}}$-norm of a tensor by
 $$||\phi||_{L^2_{\delta}}=\sqrt{\int_M|\phi|^2r^\delta\mathrm{dVol}}.$$
 Let $L^2_{\delta}$ be the space of tensors with finite $L^2_{\delta}$-norm.
 Define $\nabla\phi=\psi$ in the distribution sense if for any $\xi\in\mathrm{C}^{\infty}_0$, we have $(\phi,\nabla^*\xi)=(\psi,\xi)$. Let $H^2_{\delta}$ be the space of  all tensors $\phi$
 such that
 $$\phi \in L^2_{\delta},  \qquad  \nabla \phi \in L^2_{\delta+2}\qquad {\rm and }\qquad   \nabla^2 \phi \in L^2_{\delta+4}.$$
 We can define the norm in this weighted space by
 $$||\phi||_{H^2_{\delta}}=\sqrt{\int_M|\phi|^2r^\delta\mathrm{dVol}+\int_M|\nabla\phi|^2r^{\delta+2}\mathrm{dVol}
 +\int_M|\nabla^2\phi|^2r^{\delta+4}\mathrm{dVol}}.$$
 The inner product is defined accordingly.
 \end{definition}

 \begin{proposition}
 For any $\delta$, $H^2_{\delta}$ is a Hilbert space and the space of compactly supported smooth tensors $\mathrm{C}^{\infty}_0$ is dense.
 \end{proposition}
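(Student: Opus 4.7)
The plan is to split the verification into the Hilbert-space structure and the density statement. Both reduce to standard weighted $L^2$ arguments, with care taken about the scaling of the weights under differentiation.

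For the Hilbert-space part, the natural inner product associated with the stated norm is
\[
(\phi,\psi)_{H^2_\delta}=\int_M\langle\phi,\psi\rangle r^\delta\,\mathrm{dVol}+\int_M\langle\nabla\phi,\nabla\psi\rangle r^{\delta+2}\,\mathrm{dVol}+\int_M\langle\nabla^2\phi,\nabla^2\psi\rangle r^{\delta+4}\,\mathrm{dVol}.
\]
It is bilinear, symmetric, and positive definite from the corresponding properties of the weighted $L^2$-spaces. For completeness, I would take a Cauchy sequence $\{\phi_n\}$ and use completeness of each weighted $L^2$-space to produce limits $\phi\in L^2_\delta$, $\psi\in L^2_{\delta+2}$, and $\xi\in L^2_{\delta+4}$ of $\phi_n$, $\nabla\phi_n$, and $\nabla^2\phi_n$, respectively. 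Testing against an arbitrary $\eta\in C^\infty_0$ then gives
\[
(\phi,\nabla^*\eta)_{L^2}=\lim_n(\phi_n,\nabla^*\eta)_{L^2}=\lim_n(\nabla\phi_n,\eta)_{L^2}=(\psi,\eta)_{L^2},
\]
so $\nabla\phi=\psi$ in the distributional sense used to define the space, and analogously $\nabla^2\phi=\xi$; hence $\phi\in H^2_\delta$ with $\phi_n\to\phi$ in norm.

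For density, I would approximate in two stages. First, truncate using the cut-off $\chi$ from the Notations: set $\phi_R=\chi(r/R)\phi$. Then $\phi-\phi_R$ is supported in $\{r\geq R\}$, so $\|\phi-\phi_R\|_{L^2_\delta}\to 0$ by dominated convergence. Differentiating produces cross-terms:
\[
\nabla\phi_R-\nabla\phi=(\chi(r/R)-1)\nabla\phi+\nabla\chi(r/R)\otimes\phi,
\]
and one more term of the form $\nabla^2\chi(r/R)\otimes\phi$ when taking $\nabla^2$. Since $|\nabla\chi(r/R)|\leq CR^{-1}$ and $|\nabla^2\chi(r/R)|\leq CR^{-2}$ on the annulus $R\leq r\leq 2R$, the worst cross-term is bounded, via $r\leq 2R$ on its support, by
\[
CR^{-2}\int_{R\leq r\leq 2R}|\phi|^2 r^{\delta+2}\,\mathrm{dVol}\leq C'\int_{R\leq r\leq 2R}|\phi|^2 r^\delta\,\mathrm{dVol}\to 0,
\]
and analogously the $\nabla\phi$- and $\nabla^2\chi$-cross terms collapse to tails of $\|\nabla\phi\|_{L^2_{\delta+2}}^2$ and $\|\phi\|_{L^2_\delta}^2$, respectively. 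Hence $\phi_R\to\phi$ in $H^2_\delta$. Second, each $\phi_R$ is compactly supported, and on any compact set the $H^2_\delta$-norm is equivalent to the ordinary $H^2$-norm because $r$ is bounded between positive constants. Covering the support by finitely many coordinate charts with a smooth partition of unity and mollifying each piece via convolution with a standard approximate identity produces a sequence in $C^\infty_0$ converging to $\phi_R$ in $H^2$, hence in $H^2_\delta$. A diagonal argument then extracts an approximating sequence for $\phi$.

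The only place that requires any real care is the cross-term bookkeeping in the truncation step, and this is precisely why the weights are chosen in the pattern $\delta,\delta+2,\delta+4$: each factor of $R^{-1}$ lost to a derivative of $\chi(r/R)$ is absorbed by the extra factor $r\sim R$ in the stronger weight, so every error reduces to a vanishing tail of a weighted $L^2$-norm that $\phi$ already controls by hypothesis.
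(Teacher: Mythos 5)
Your proof is correct and follows essentially the same route as the paper: completeness is reduced to completeness of the weighted $L^2$ spaces together with distributional testing against $C^\infty_0$, and density uses truncation by $\chi(r/R)$ with the bounds $|\nabla^m\chi_R|\leq C R^{-m}$ followed by standard mollification. The extra cross-term bookkeeping you supply is correct and just makes explicit what the paper leaves implicit.
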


 \begin{proof}
 The map $\phi\rightarrow\phi r^{\delta/2}$ defines an isometry between $L^2_{\delta}$ and $L^2$. Since $L^2$ is complete, $L^2_{\delta}$ is also complete. Now if $|\phi_i-\phi_j|_{H^2_{\delta}}\rightarrow 0$, then both $|\phi_i-\phi_j|_{L^2_{\delta}}$ and $|\nabla^m\phi_i-\nabla^m\phi_j|_{L^2_{\delta+2m}}$ go to 0, $m=1,2$. By completeness, $\phi_i$ converge to $\phi$ in $L^2_{\delta}$, and $\nabla\phi_i$ converge to $\psi$ in $L^2_{\delta+2}$. Now pick any test tensor $\xi\in\mathrm{C}^{\infty}_0$,
 $$(\phi,\nabla^*\xi)=\lim_{i\rightarrow \infty}(\phi_i,\nabla^*\xi)=\lim_{i\rightarrow \infty}(\nabla\phi_i,\xi)=(\psi,\xi).$$
 So $\nabla\phi=\psi$ in the distribution sense. The second derivative is similar. So $\phi_i$ converge to $\phi$ in $H^2_{\delta}$, too.

 For the density, let $\chi_R=\chi(r/R)$. Then
 \[\begin{split}|\phi-\phi\chi(r/R)|_{H^2_{\delta}}\le C \int_M(|(1-\chi_R)\phi|^2r^\delta+|(1-\chi_R)\nabla\phi|^2r^{\delta+2}+|\nabla\chi_R||\phi|^2r^{\delta+2}
 \\+|\nabla^2\chi_R \phi|^2r^{\delta+4}+|(1-\chi_R)\nabla^2\phi|^2r^{\delta+4}
 +|\nabla\chi_R \nabla\phi|r^{\delta+4})
 \end{split}\]
 So $\chi(r/R)\phi$ converge to $\phi$ in $H^2_{\delta}$ when $R$ goes to infinity since $|\nabla\chi_R|\le C/R$ and $|\nabla^2\chi_R|\le C/R^2$.
 Now the standard convolution method implies the density of $\mathrm{C}^{\infty}_0$.

 \end{proof}

 \begin{lemma}
 For any harmonic tensor $\phi$ in $H^2_{\delta}$ and any large enough $r$,
 $$|\phi(y)|\le C||\phi||_{H^2_{\delta}}r(y)^{-\delta/2+k/2-2}.$$
 When $-\delta/2+k/2-2<0$, $\phi=0$.
 \label{harmonic-form}
 \end{lemma}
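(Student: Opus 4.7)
The plan is to apply the mean value inequality to the subharmonic function $|\phi|^2$ on a ball of radius $\rho=r(y)/2$, and use the torus-fibration structure at infinity to obtain a lower bound $\mathrm{Vol}(B_\rho(y))\gtrsim r(y)^{4-k}$. The exponent $-\delta/2+k/2-2$ is precisely what one obtains by balancing the $L^2_\delta$ weight against the volume of such a ball.

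First I would verify that $|\phi|^2$ is subharmonic: since $\nabla^*\nabla\phi=0$, a direct computation using the convention $\Delta=\mathrm{Tr}\nabla^*\nabla$ gives
$$\Delta|\phi|^2 = 2\langle\nabla^*\nabla\phi,\phi\rangle-2|\nabla\phi|^2 = -2|\nabla\phi|^2\le 0.$$
Because $M$ is hyperk\"ahler, hence Ricci-flat, Bishop-Gromov supplies uniform volume doubling on all scales, and together with the Poincar\'e inequality this allows a standard Moser iteration to produce a scale-invariant mean value inequality
$$|\phi(y)|^2 \le \frac{C}{\mathrm{Vol}(B_\rho(y))}\int_{B_\rho(y)}|\phi|^2\,d\mathrm{Vol},$$
with $C$ depending only on the dimension.

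Next, choosing $\rho = r(y)/2$, the function $r$ lies in $[r(y)/2, 3r(y)/2]$ throughout $B_{r(y)/2}(y)$, so $r^{-\delta}\le Cr(y)^{-\delta}$ and hence
$$\int_{B_{r(y)/2}(y)}|\phi|^2\,d\mathrm{Vol}\le Cr(y)^{-\delta}\int_{B_{r(y)/2}(y)}|\phi|^2 r^\delta\,d\mathrm{Vol}\le Cr(y)^{-\delta}\|\phi\|_{L^2_\delta}^2.$$
By Theorem \ref{standard-fiberation}, for $r(y)$ large this ball is, up to an $O(r^{-\epsilon})$ perturbation, a ball of radius $r(y)/2$ inside a $\mathbb{T}^k$-bundle over the $(4-k)$-dimensional flat cone $C(S(\infty))$, whose volume is comparable to $r(y)^{4-k}\cdot\mathrm{vol}(\mathbb{T}^k)$, giving $\mathrm{Vol}(B_{r(y)/2}(y))\ge cr(y)^{4-k}$. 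Combining yields $|\phi(y)|^2\le Cr(y)^{-\delta-(4-k)}\|\phi\|_{L^2_\delta}^2$, which after taking square roots and bounding $\|\phi\|_{L^2_\delta}\le\|\phi\|_{H^2_\delta}$ is exactly the asserted estimate.

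For the vanishing claim, when $-\delta/2+k/2-2<0$ the pointwise bound forces $|\phi|^2\to 0$ as $r\to\infty$. Fixing any $p\in M$ and applying the maximum principle to the smooth nonnegative subharmonic function $|\phi|^2$ on large geodesic balls $B_R(p)$ gives $|\phi(p)|^2\le\max_{\partial B_R(p)}|\phi|^2\to 0$ as $R\to\infty$, so $\phi\equiv 0$. The technical step I expect to demand the most care is verifying the two uniformities — a scale-invariant Moser constant and a uniform volume lower bound $\mathrm{Vol}(B_{r(y)/2}(y))\ge cr(y)^{4-k}$ — both of which should follow from Ricci-flatness and from the asymptotic fibration of the previous section, but one has to check that the diffeomorphism there distorts volumes by at most a bounded multiplicative constant.
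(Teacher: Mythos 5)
Your proof is correct but obtains the key mean value inequality by a genuinely different route. The paper lifts the ball $B_{2R}(y)$ (with $r(y)=20R$) to the covering space $\mathbb{R}^4$ of the asymptotic torus fibration and applies the Euclidean local maximum principle (Gilbarg--Trudinger, Theorem 9.20) there, reading off the volume normalization $|B_{2R}(0)|\sim R^{4-k}$ directly from the base $\mathbb{R}^{4-k}$. You instead derive the mean value inequality intrinsically from Ricci-flatness --- Bishop--Gromov doubling plus the Buser Poincar\'e inequality and Moser iteration (equivalently Li--Schoen) --- and invoke the asymptotic fibration of Theorem \ref{standard-fiberation} only to supply the volume lower bound $\mathrm{Vol}(B_{r(y)/2}(y))\ge c\,r(y)^{4-k}$. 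Both routes land on the same exponent and both ultimately lean on the fibration structure for the volume factor; yours quotes heavier off-the-shelf machinery but avoids the explicit pass to the covering space, while the paper's is more hands-on and self-contained given the fibration already in place. The vanishing step via the maximum principle matches the paper's argument. One small note: the paper's displayed identity reads $\Delta|\phi|^2=2|\nabla\phi|^2\ge 0$, whereas you wrote $\Delta|\phi|^2=-2|\nabla\phi|^2\le 0$ after interpreting $\Delta$ as the positive rough Laplacian $\nabla^*\nabla$; since you then consistently treat $|\phi|^2$ as satisfying the sub-mean-value property, this is only a sign-convention mismatch with no effect on the substance.
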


 \begin{proof}
 Given $y\in M$, suppose $r(y)=20R$. Then the ball $B_{2R}(y)\subset M$ is asymptotic to $B_{2R}(0)\times \mathbb{T}^k\subset \mathbb{R}^{4-k}\times \mathbb{T}^k$. Consider the covering space $\mathbb{R}^4$ of $\mathbb{R}^{4-k}\times \mathbb{T}^k$. If we apply Gilbarg and Trudinger's Theorem 9.20 in \cite{GilbargTrudinger} there, we would get $$|\phi|^2(y)\le \frac{C}{|B_{2R}(0)|}\int_{B_{2R}(0)}|\phi|^2.$$
 So $$|\phi(y)|\le C||\phi||_{H^2_{\delta}}r(y)^{-\delta/2+k/2-2}.$$
 Now the maximal principle implies the last result in the lemma because $\Delta|\phi|^2=2|\nabla\phi|^2\ge 0$.
 \end{proof}

 Now we need an weighted $L^2$-estimate.

 \begin{lemma}
 For the standard ALF, ALG or ALH metric in Theorem \ref{standard-fiberation}, suppose $\phi$ is a smooth form supported in $B_{\tilde R}-B_R$. Then as long as $R$ is large enough, $$\int_E|\nabla^2\phi|^2r^{\delta+4}+\int_E|\nabla\phi|^2r^{\delta+2}\le C(\int_E|\Delta\phi|^2r^{\delta+4}+\int_E|\phi|^2r^{\delta})$$
 \label{weighted-L2-estimate}
 \end{lemma}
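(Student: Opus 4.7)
The plan is to use two rounds of weighted integration by parts on the standard model $(E,h)$. Recall that $h$ is flat in the ALG and ALH cases and is a Ricci-flat Taub-NUT metric with $|\mathrm{Rm}(h)| = O(r^{-3})$ in the ALF case; in all cases $|\mathrm{Rm}(h)|\cdot r^{2}$ can be made arbitrarily small on $E\setminus B_R$ by choosing $R$ large. Since $\phi$ has compact support in $B_{\tilde R}\setminus B_R$, no boundary terms will arise in the integration by parts.

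For the first-derivative estimate, I would pair $\Delta\phi=\nabla^*\nabla\phi$ with $\phi$ against the weight $r^{\delta+2}$:
\[
\int_E \langle\Delta\phi,\phi\rangle\, r^{\delta+2} \;=\; \int_E |\nabla\phi|^2\, r^{\delta+2} \;+\; \int_E \langle\nabla\phi,\phi\otimes\nabla r^{\delta+2}\rangle.
\]
Using $|\nabla r^{\delta+2}|\le C(\delta)\,r^{\delta+1}$ together with Cauchy--Schwarz with a small parameter $\varepsilon$, the left-hand side is dominated by $\varepsilon\int|\Delta\phi|^{2} r^{\delta+4} + \varepsilon^{-1}\int|\phi|^{2} r^{\delta}$ and the last term by $\varepsilon\int|\nabla\phi|^{2} r^{\delta+2} + \varepsilon^{-1}C(\delta)\int|\phi|^{2} r^{\delta}$. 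Choosing $\varepsilon$ small and absorbing the $\varepsilon\int|\nabla\phi|^{2} r^{\delta+2}$ piece into the left yields the gradient bound.

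For the Hessian estimate, I would expand $\int_E|\Delta\phi|^{2} r^{\delta+4}$ and integrate by parts twice to produce $\int_E |\nabla^{2}\phi|^{2} r^{\delta+4}$ plus three families of correction terms: (i) weight-derivative terms coupling $\nabla^{2}\phi$ to $\nabla\phi$ via $\nabla r^{\delta+4}$ of size $\lesssim r^{\delta+3}$; (ii) commutator terms from interchanging covariant derivatives, of the schematic form $\int \mathrm{Rm}(h)\ast\nabla\phi\ast\nabla\phi\cdot r^{\delta+4}$; (iii) second-order weight terms of the form $|\phi|\,|\nabla\phi|\, r^{\delta+2}$. Family (i) is absorbed by Cauchy--Schwarz into $\varepsilon\int|\nabla^{2}\phi|^{2} r^{\delta+4} + C_\varepsilon \int|\nabla\phi|^{2} r^{\delta+2}$, and the gradient bound of the first step then feeds this back into $\int|\Delta\phi|^{2} r^{\delta+4} + \int|\phi|^{2} r^{\delta}$; family (ii) is bounded by $C R^{-\epsilon'}\int|\nabla\phi|^{2} r^{\delta+2}$ using the curvature decay; family (iii) is trivially controlled via the gradient bound.

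The main obstacle is family (ii): because $\Delta$ is the Bochner Laplacian on tensors, commuting $\nabla_i$ past $\nabla^*$ generates genuine curvature couplings that would vanish for scalars. The saving is that $|\mathrm{Rm}(h)|$ decays strictly faster than $r^{-2}$ on every standard model, so taking $R$ large forces the coefficient of the curvature term to be arbitrarily small, permitting absorption into the left-hand side. This is precisely where the hypothesis ``$R$ large enough'' is used.
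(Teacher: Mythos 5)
Your argument is correct but takes a genuinely different route from the paper. The paper's own proof covers $E\setminus B_R$ by balls $B_{\kappa r_j}(p_j)$ of radius comparable to the distance to the origin, lifts each to a ball $B_{\kappa r_j}(0)\subset\mathbb{R}^4$ in the torus cover (to restore a good injectivity radius in the collapsed fiber directions), observes that there $h$ is the flat metric plus $O'(r^{-1})$, applies the interior $L^2$ estimate (Theorem~9.11 of Gilbarg--Trudinger) with uniform constants, and sums over a bounded-multiplicity cover; since the weight $r^{\delta}$ is essentially constant on each such ball, the weighted inequality drops out. Your direct weighted integration by parts avoids all of that covering-and-lifting machinery and is more self-contained, at the cost of tracking the commutator terms explicitly. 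Two small items in that tracking deserve attention. First, commuting $\nabla_i$ past $[\nabla_i,\nabla_j]\phi = \mathrm{Rm}\ast\phi$ also generates a term of schematic form $\int \nabla\mathrm{Rm}(h)\ast\phi\ast\nabla\phi\, r^{\delta+4}$, which you do not list; it is harmless, since $|\nabla\mathrm{Rm}(h)| = O(r^{-4})$ and the factor $R^{-1}$ allows absorption, but it should appear. Second, your family (iii) is suspect: in the direct integration by parts of $\int|\nabla^2\phi|^2 r^{\delta+4}$ the weight is only ever differentiated once per pairing, so the term $\int|\phi|\,|\nabla\phi|\, r^{\delta+2}$ does not in fact arise --- which is fortunate, because controlling it ``via the gradient bound'' would produce $\int|\phi|^2 r^{\delta+2}$, which is \emph{not} dominated by $\int|\phi|^2 r^{\delta}$ uniformly in $\tilde R$. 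You should confirm in the final computation that no such term appears.
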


 \begin{proof}
 We only need to prove the same thing on $B_{\kappa r_j}(p_j)\subset E$ uniformly. It is enough to consider the covering $B_{\kappa r_j}(0)\subset \mathbb{R}^4$. Notice that $h=$flat metric+$O'(r^{-1})$. So we can simply use the Theorem 9.11 of \cite{GilbargTrudinger}.
 \end{proof}

\subsection{Elliptic estimates with polynomial growth weights}
 In this subsection, we will prove the main estimate for tensors in the weighted Hilbert space with polynomial growth weights.

 We started the estimate for functions on $\mathbb{R}^d$. Then we extend this to $\mathbb{T}^k$ invariant tensors. We can improve it to general tensors on the standard fiberation $E$. Then we can transfer that estimate back to any manifold $M$ asymptotic to the standard model. This main estimate allows us to prove the solvablity of Bochner Laplacian equation for  tensors.

 \begin{theorem}
 Suppose $f$ is a real smooth function on $\mathbb{R}^d$($d=1,2,3,...$) supported in an annulus,$\;\;\delta$ is not an integer. Then
 $$\int_{\mathbb{R}^d}|f|^2r^{\delta}\mathrm{dVol}<C\int_{\mathbb{R}^d}|\Delta f|^2r^{\delta+4}\mathrm{dVol}.$$
 \label{estimate-on-Rd}
 \end{theorem}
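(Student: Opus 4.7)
The plan is to reduce the inequality to a family of one-dimensional ODE estimates via spherical harmonic decomposition, then to handle each by Fourier analysis after a logarithmic substitution, using the hypothesis $\delta\notin\mathbb{Z}$ to rule out resonance at the indicial roots of the separated operators.

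First I would write $f(r\theta)=\sum_k f_k(r)\,Y_k(\theta)$ with $\{Y_k\}$ an $L^2(S^{d-1})$-orthonormal basis of eigenfunctions of $-\Delta_{S^{d-1}}$ with eigenvalues $\lambda_k=k(k+d-2)$ (for $d=1$, the decomposition into even and odd parts of $f$ on the two rays plays the role of the spherical harmonics). Using $\Delta=\partial_r^2+\frac{d-1}{r}\partial_r+\frac{1}{r^2}\Delta_{S^{d-1}}$ and the orthogonality of the $Y_k$, the inequality splits into the family
\[ \int_0^\infty |f_k|^2\,r^{\delta+d-1}\,dr \;\le\; C\int_0^\infty |L_k f_k|^2\,r^{\delta+d+3}\,dr,\qquad L_k=\partial_r^2+\tfrac{d-1}{r}\partial_r-\tfrac{\lambda_k}{r^2}, \]
and the real work is to prove these with $C$ independent of $k$.

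Next I would substitute $r=e^t$ and set $u_k(t)=e^{(\delta+d)t/2}f_k(e^t)$; these weights are chosen so that $\int_0^\infty |f_k|^2 r^{\delta+d-1}\,dr=\int_{\mathbb{R}}|u_k|^2\,dt$ and $\int_0^\infty |L_k f_k|^2 r^{\delta+d+3}\,dr=\int_{\mathbb{R}}|Q_k u_k|^2\,dt$, where a direct computation gives the constant-coefficient operator
\[ Q_k=\partial_t^2-(\delta+2)\partial_t+\tfrac{1}{4}(\delta+d)(\delta-d+4)-\lambda_k. \]
By Plancherel, the one-dimensional inequality is equivalent to the uniform lower bound
\[ \bigl|\widehat{Q_k}(\xi)\bigr|^2=(\xi^2-\beta_k)^2+(\delta+2)^2\xi^2\;\ge\;c>0\qquad(\xi\in\mathbb{R},\ k\ge 0), \]
where $\beta_k=\tfrac{1}{4}(\delta+d)(\delta-d+4)-\lambda_k$.

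The final step, and the main obstacle, is the uniformity in $k$. For $\xi\ne 0$ the second term is strictly positive as long as $\delta\ne -2$; the only failure mode is therefore $\xi=0$ with $\beta_k=0$. Completing the square in $\beta_k=0$ gives $(\delta+2)^2=(2k+d-2)^2$, i.e.\ $\delta\in\{2k+d-4,\,-2k-d\}$, which are exactly integers and are excluded by hypothesis. Since $\beta_k\to-\infty$ and $(\xi^2-\beta_k)^2\ge\beta_k^2$ for large $k$, the bound is clearly uniform there; the finitely many small $k$ give a strictly positive minimum by continuity and decay of the symbol at infinity. Thus the one-dimensional estimates hold with a $k$-independent constant, and summation in $k$ yields the theorem. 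The arithmetic heart of the argument is that the critical weights of the separated radial operators form a discrete subset of $\mathbb{Z}$, so non-integrality of $\delta$ is exactly the right hypothesis to guarantee a $k$-uniform lower bound on the Fourier symbol.
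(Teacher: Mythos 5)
Your proposal is correct. It shares the paper's opening move—separate variables with spherical harmonics and reduce to a one-parameter family of radial estimates whose constants must be uniform in the spherical eigenvalue—and it correctly identifies the critical set of weights as $\{2k+d-4,\ -2k-d : k\geq0\}\subset\mathbb{Z}$ (exactly the same set that appears in the paper as the vanishing locus of $(\delta+2j+d)(\delta-2j-d+4)$). Where the two proofs diverge is in how the one-dimensional radial estimate is established. The paper factors the separated operator as $L_j f = r^{-j-d+1}\bigl[r^{2j+d-1}(r^{-j}f)'\bigr]'$ and applies the power-weighted Hardy inequality $\int g^2 r^{\mu}\,dr \le \tfrac{4}{(\mu+1)^2}\int (g')^2 r^{\mu+2}\,dr$ twice, each time losing a factor that stays bounded because $\delta\notin\mathbb{Z}$; this is short, requires no Fourier analysis, and produces an explicit constant. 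You instead make the substitution $r=e^t$, conjugate by $e^{(\delta+d)t/2}$ to obtain a constant-coefficient operator $Q_k$, and invoke Plancherel to reduce everything to a uniform lower bound on $|\widehat{Q_k}(\xi)|^2=(\xi^2-\beta_k)^2+(\delta+2)^2\xi^2$; this is the standard Mellin-transform/indicial-root framework (Lockhart--McOwen, Nirenberg--Walker), somewhat heavier machinery but more systematic and readily generalizable to higher-order operators and more complicated weights. Your computation of $Q_k$ and of $\beta_k=\tfrac14(\delta+d)(\delta-d+4)-\lambda_k$ checks out, as does the algebra giving $(\delta+2)^2=(2k+d-2)^2$ at $\beta_k=0$; the uniformity argument (large $k$ handled by $\beta_k\to-\infty$, small $k$ by positivity and coercivity of the symbol) is sound. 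One small point you treat a bit casually: at $\delta=-2$ the cross term drops out and the symbol could vanish at nonzero $\xi$ if $\beta_k>0$, but this is moot because $-2\in\mathbb{Z}$ is excluded, and in fact one checks $\beta_k\le0$ when $\delta=-2$, so even there the only zero of the symbol is at $\xi=0$, $k=0$, $d=2$. Your remark about handling $d=1$ by splitting into even and odd parts is actually slightly more careful than the paper's terse parenthetical, which only mentions the constant harmonic.
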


 \begin{proof} For the Laplacian on the standard sphere $\mathbb{S}^{d-1}, $  it is well know it has eigenfunctions $\phi_{j,l}$ with eigenvalue $-j(d-2+j)$, $l=1,2,...,n_j$. (For $d=1$,all $n_j$ are 0 except $n_0=1$ and $\phi_{j,1}=1$). We write $f$ in terms of those eigenfunctions $$f\sim\sum_{j=0}^{\infty}\sum_{l=1}^{n_j} f_{j,l}(r)\phi_{j,l}(\theta),$$
 where $$f_{j,l}(r)=\int_{\mathbb{S}^{d-1}} f(r,\theta)\phi_{j,l}(\theta)\mathrm{dVol}.$$
 Then $$ \begin{array}{lcl}
  \Delta f & \sim & \sum_{j=0}^{\infty}\sum_{l=1}^{n_j}(f''_{j,l}+\frac{d-1}{r}f'_{j,l}-\frac{j(d-2+j)}{r^2}f_{j,l})\phi_{j,l}(\theta) \\
  & = & \sum_{j=0}^{\infty}\sum_{l=1}^{n_j}r^{-j-d+1}[r^{2j+d-1}(r^{-j}f_{j,l})']'\phi_{j,l}(\theta).\end{array} $$
 From integral by parts and the Cauchy-Schwartz inequality
 $$ \begin{array}{lcl} (\int_0^\infty g^2r^{\mu}\mathrm{d}r)^2 & = & \left(\frac{-2}{\mu+1}\int_0^\infty gg'r^{\mu+1}d\,r\right)^2 \\
 & \le &  \frac{4}{(\mu+1)^2}\int_0^\infty g^2r^{\mu}\mathrm{d}r\int_0^\infty (g')^2r^{\mu+2}\mathrm{d}r. \end{array} $$
 So we get the Hardy's inequality
 $$\int_0^\infty g^2r^{\mu}\mathrm{d}r\le\frac{4}{(\mu+1)^2}\int_0^\infty (g')^2r^{\mu+2}\mathrm{d}r.$$
 Therefore $$\begin{array}{lcl} \int_0^\infty f_{j,l}^2r^{\delta}r^{d-1}\mathrm{d}r & = & \int_0^\infty (r^{-j}f_{j,l})^2r^{\delta+2j+d-1}\mathrm{d}r\\
 & \le & \frac{4}{(\delta+2j+d)^2}\int_0^\infty[(r^{-j}f_{j,l})']^2r^{\delta+2j+d+1}\mathrm{d}r\\ & = &
 \frac{4}{(\delta+2j+d)^2}\int_0^\infty[r^{2j+d-1}(r^{-j}f_{j,l})']^2r^{\delta-2j-d+3}\mathrm{d}r\\
 &\le& \frac{16\int_0^\infty(r^{-j-d+1}[r^{2j+d-1}(r^{-j}f_{j,l})']')^2r^{\delta+4}r^{d-1}\mathrm{d}r}{(\delta+2j+d)^2(\delta-2j-d+4)^2}.
 \end{array}$$
 By Fubini Theorem and the Hilbert-Schmidt Theorem (When $d=2$, we get exactly the Fourier series, so the Hilbert-Schmidt theorem is reduced to the Parseval's identity) as long as $\delta$ is not an interger, we are done.
 \end{proof}

 \begin{theorem}
 Suppose $(E,h)$ is the product of $[R,\infty)$ and $\mathbb{T}^3$, $\phi$ is a smooth $\mathbb{T}^3$-invariant tensor supported in $B_{\tilde R}-B_R$. Then as long as $\delta$ is not an integer, for large enough $R$,
 $$ \int_{E}|\phi|^2r^{\delta}\mathrm{dVol}<C \int_{E}|\Delta\phi|^2r^{\delta+4}\mathrm{dVol}$$
 \label{estimate-of-trivial-products}
 \end{theorem}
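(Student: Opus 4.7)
The plan is to exploit the $\mathbb{T}^3$-invariance of $\phi$ to reduce the inequality on $E$ to a one-dimensional Hardy estimate, and then invoke the $d=1$, $j=0$ special case of the calculation already carried out in Theorem \ref{estimate-on-Rd}.

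First I would choose a global parallel orthonormal frame $\{e_a\}$ on $E=[R,\infty)\times\mathbb{T}^3$, which exists because $h$ is flat and the base is parallelizable. Expanding $\phi$ in this frame as $\phi=\sum_I\phi_I\,e^I$, the assumption of $\mathbb{T}^3$-invariance forces each coefficient $\phi_I$ to be a smooth function of $r$ alone. Because the frame is parallel and the connection of $h$ is flat, the Bochner Laplacian $\mathrm{Tr}\,\nabla^*\nabla$ is diagonal in components, namely $\Delta\phi=\sum_I \phi_I''(r)\,e^I$, and pointwise $|\phi|^2=\sum_I\phi_I^2$ and $|\Delta\phi|^2=\sum_I(\phi_I'')^2$. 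Since $\mathrm{dVol}_E=\mathrm{d}r\wedge\mathrm{dVol}_{\mathbb{T}^3}$, every integral over $E$ factors as $\mathrm{Vol}(\mathbb{T}^3)$ times a one-dimensional integral, so it suffices to prove, for every scalar $g=\phi_I$ supported inside $(R,\tilde R)$, the inequality
$$\int_R^\infty g^2\,r^{\delta}\,\mathrm{d}r\le C\int_R^\infty (g'')^2\,r^{\delta+4}\,\mathrm{d}r.$$

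Extending $g$ by zero across $r=R$ yields a smooth compactly supported function on $(0,\infty)$, and then I can apply Hardy's inequality twice, exactly as in the proof of Theorem \ref{estimate-on-Rd}:
$$\int_0^\infty g^2\,r^{\delta}\,\mathrm{d}r\le\frac{4}{(\delta+1)^2}\int_0^\infty (g')^2\,r^{\delta+2}\,\mathrm{d}r\le\frac{16}{(\delta+1)^2(\delta+3)^2}\int_0^\infty (g'')^2\,r^{\delta+4}\,\mathrm{d}r.$$
The denominators $\delta+1$ and $\delta+3$ are nonzero precisely because $\delta$ is not an integer, producing the desired constant $C$. Summing over the multi-index $I$ and multiplying by $\mathrm{Vol}(\mathbb{T}^3)$ gives the estimate on $E$.

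The main technical point to watch is the boundary term at $r=R$ when integrating by parts. It vanishes because $\phi$ is supported in $B_{\tilde R}\setminus B_R$, so each $\phi_I$ and its derivatives vanish in a neighborhood of $r=R$ (and of $r=\tilde R$); the zero-extension is thus smooth and compactly supported away from both endpoints, and no boundary contributions appear. Apart from this observation, the argument reduces to the elementary 1D Hardy inequality and does not require any new ingredient beyond what was already established in Theorem \ref{estimate-on-Rd}.
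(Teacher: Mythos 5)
Your proof is correct and follows essentially the same route as the paper: the paper's one-sentence proof says that the triviality of the tangent bundle reduces the tensor estimate to a coefficient estimate, which is handled by Theorem~\ref{estimate-on-Rd}, and you have simply spelled out that reduction (parallel frame, $\mathbb{T}^3$-invariance making each coefficient depend only on $r$, factoring out $\mathrm{Vol}(\mathbb{T}^3)$, then the $d=1$, $j=0$ double Hardy estimate with denominators $(\delta+1)^2(\delta+3)^2$). Nothing is missing, and the observation about vanishing boundary terms from the annular support is the same point implicit in the paper.
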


 \begin{proof}
 Since the tangent bundle is trivial, the estimate of tensors is reduced to the estimate of their coefficients, which has been proved in Theorem \ref{estimate-on-Rd}.
 \end{proof}

 \begin{theorem}
 Suppose $(E,h)$ is the standard ALG metric as in Theorem \ref{standard-fiberation}, $\phi$ is a smooth $\mathbb{T}^2$-invariant tensor supported in $B_{\tilde R}-B_R$. Then as long as $30\delta$ is not an integer, for large enough $R$,
 $$ \int_{E}|\phi|^2r^{\delta}\mathrm{dVol}<C \int_{E}|\Delta\phi|^2r^{\delta+4}\mathrm{dVol}.$$
 \label{estimate-of-flat-elliptic-surface}
 \end{theorem}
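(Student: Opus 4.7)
My plan is to adapt the Fourier-separation and Hardy-inequality argument from Theorem \ref{estimate-on-Rd} to the cone base of the ALG model. Since the standard ALG metric on $E$ is flat, and $E$ fibers as a $\mathbb{T}^2$-bundle over the flat cone $\mathbb{C}_\beta$ (in the complex coordinates of Theorem \ref{standard-fiberation}, $E=\{(u,v):|u|\ge R\}/\sim$ with $(u,v)\sim(e^{2\pi i\beta}u,e^{-2\pi i\beta}v)$), any $\mathbb{T}^2$-invariant tensor $\phi$ descends to a section of a flat, finite-rank, twisted vector bundle $V$ over $\mathbb{C}_\beta\setminus B_R$, and integration over $E$ factors as $\operatorname{Vol}(\mathbb{T}^2)\cdot\int_{\mathbb{C}_\beta}$. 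Working in the parallel frame $\{du,d\bar u,dv,d\bar v\}$, the Bochner Laplacian acting on $\phi$ reduces to the scalar Laplacian acting on each component function.

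Write $\phi=\sum f_{abcd}(u)(du)^a(d\bar u)^b(dv)^c(d\bar v)^d$; the descent condition forces each coefficient to satisfy $f_{abcd}(e^{2\pi i\beta}u)=e^{-2\pi i\beta\sigma}f_{abcd}(u)$ with spin $\sigma=a-b-c+d\in\mathbb{Z}$. Lifting to the universal cover and writing $u=re^{i\theta}$, expand $f_{abcd}=\sum_{\lambda}c_\lambda(r)e^{i\lambda\theta}$; the twisted periodicity restricts the Fourier modes to $\lambda\in(1/\beta)\mathbb{Z}-\sigma$, and each mode satisfies $(\Delta f)_\lambda=c_\lambda''+r^{-1}c_\lambda'-\lambda^2 r^{-2}c_\lambda$. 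Applying the two-step Hardy argument of Theorem \ref{estimate-on-Rd} to each mode gives
\[\int_0^\infty|c_\lambda|^2 r^{\delta+1}\,dr\le\frac{16}{(\delta+2|\lambda|+2)^2(\delta-2|\lambda|+2)^2}\int_0^\infty|(\Delta f)_\lambda|^2 r^{\delta+5}\,dr,\]
provided neither denominator vanishes. Parseval on the angular fibre $[0,2\pi\beta]$ together with summation over the finitely many tensor components $(a,b,c,d)$ then yields the desired global estimate on $E$.

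It remains to verify that the hypothesis $30\delta\notin\mathbb{Z}$ keeps every Hardy denominator uniformly bounded away from zero. The admissible values $\beta\in\{1,\tfrac12,\tfrac13,\tfrac14,\tfrac16,\tfrac23,\tfrac34,\tfrac56\}$ have denominators of $1/\beta$ (in lowest terms) in $\{1,2,3,5\}$, so as $n,\sigma\in\mathbb{Z}$ vary the frequencies $\lambda=n/\beta-\sigma$ lie in $(1/q)\mathbb{Z}$ for some $q\in\{1,2,3,5\}$. Consequently the forbidden set $\{-2\pm 2\lambda\}$ is contained in $\mathbb{Z}\cup(2/3)\mathbb{Z}\cup(2/5)\mathbb{Z}\subset(1/15)\mathbb{Z}\subset(1/30)\mathbb{Z}$, and $30\delta\notin\mathbb{Z}$ rules out all bad values. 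The principal bookkeeping obstacle is keeping track of the correct spin $\sigma$ for each tensor component under the cone monodromy; once the spin decomposition is correctly set up, the rest is a direct mode-by-mode Fourier-Parseval adaptation of the scalar proof of Theorem \ref{estimate-on-Rd}.
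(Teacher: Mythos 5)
Your argument is correct and follows the same Fourier-separation-plus-Hardy strategy as the paper's proof. The only difference is presentational: the paper first passes to the $n$-fold cover of $E$ (the isometric product of the $m$-fold cover of $\mathbb{C}\setminus B_R$ with $\mathbb{T}^2$, where the tangent bundle trivializes and components become $2m\pi$-periodic functions with modes $e^{ij\theta/m}$), whereas you stay on the cone of angle $2\pi\beta$ and absorb the holonomy into the ``spin'' $\sigma$ of each tensor component; the two bookkeepings are equivalent and yield the same forbidden set of weights inside $\tfrac{1}{30}\mathbb{Z}$, with $m\in\{1,2,3,5\}$.
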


 \begin{proof}
 Let $\beta=\frac{m}{n}$. Then it is enough to do the same estimate on the $n$-fold covering $\tilde E-B_R$ of $E-B_R$. $\tilde E-B_R$ is the isometric product of the $m$-fold covering of $\mathbb{C}-B_R$ and $\mathbb{T}^2$. So it is enough to prove Theorem \ref{estimate-on-Rd} on the $m$-fold cover of $\mathbb{C}-B_R$. If we write $f\sim\sum_{j=-\infty}^{\infty}f_j(r)e^{i\theta j/m}$, where $\theta\in[0,2m\pi]$ then all the works in the proof of Theorem \ref{estimate-on-Rd} go through except that we have to replace $j$ by $j/m$ there. So as long as $m\delta$ is not an integer, we are done. ($m=1,2,3,5$)
 \end{proof}

 \begin{theorem}
 Suppose $(E,h)$ is the standard ALF metric as in Theorem \ref{standard-fiberation}, $\phi$ is a smooth $\mathbb{S}^1$-invariant tensor supported in $B_{\tilde R}-B_R$. Then as long as $\delta$ is not an integer, for large enough $R$,
 $$ \int_{E}|\phi|^2r^{\delta}\mathrm{dVol}<C \int_{E}|\Delta\phi|^2r^{\delta+4}\mathrm{dVol}$$
 \label{estimate-of-Taub-NUT-metric}
 \end{theorem}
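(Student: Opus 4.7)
The plan is to reduce the estimate on the ALF total space $(E,h)$ to an estimate of the same form on the base $\mathbb{R}^{3}$, and then invoke Theorem~\ref{estimate-on-Rd} with $d=3$. First I would pass to a convenient cover: in the ALF-$D_k$ case, take the orientable double cover corresponding to $\mathbb{S}^{2}\to\mathbb{RP}^{2}$ so that the base becomes $\mathbb{R}^{3}-B_R$ and the bundle is principal $\mathbb{S}^{1}$; and in the ALF-$A_k$ case with $|e|>1$, one can lift to the $|e|$-fold cover which is the Taub-NUT metric on the nose. Since $\mathbb{S}^{1}$-invariant tensors pull back to $\mathbb{S}^{1}$-invariant tensors, and the claimed inequality is of $L^{2}$ type with the same density on both sides, it is harmless to work on the cover. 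Write the metric in the standard form $h=V\,d\mathbf{x}^{2}+V^{-1}\eta^{2}$, where $V=1+\tfrac{2m}{r}=1+O(r^{-1})$ and $\eta$ is the connection one-form whose curvature $d\eta$ has magnitude $O(r^{-2})$.

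Next, split any $\mathbb{S}^{1}$-invariant tensor using the orthonormal coframe $\bigl\{V^{1/2}dx_{1},V^{1/2}dx_{2},V^{1/2}dx_{3},V^{-1/2}\eta\bigr\}$. In this frame an $\mathbb{S}^{1}$-invariant tensor $\phi$ is completely determined by a finite collection of scalar functions $\{\phi^{I}\}$ on $\mathbb{R}^{3}-B_R$ (its frame components, which are $\mathbb{S}^{1}$-invariant by hypothesis and hence descend to the base). I would then compute $\Delta_h$ on such tensors. The Koszul formula together with $V\to 1$ and $|d\eta|=O(r^{-2})$ shows that in these frame components
\[
(\Delta_h\phi)^{I} \;=\; \Delta_{\mathbb{R}^{3}}\phi^{I} \;+\; \sum_J a^{I}_{J}(x)\phi^{J} \;+\; \sum_{J,k} b^{I,k}_{J}(x)\,\partial_k\phi^{J},
\]
where $a^{I}_{J}=O(r^{-2})$ and $b^{I,k}_{J}=O(r^{-1})$, with analogous decay for their derivatives. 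Here $\Delta_{\mathbb{R}^3}$ denotes the scalar Euclidean Laplacian applied componentwise, and the error terms collect contributions from $V-1$, $dV$, and the curvature $d\eta$.

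For the actual estimate, apply Theorem~\ref{estimate-on-Rd} with $d=3$ to each scalar component $\phi^{I}$ (cut off and treated as a function on $\mathbb{R}^{3}$; the supports sit inside an annulus by hypothesis). Because the volume element of $h$ differs from $dx^{3}d\theta$ by a factor $V=1+O(r^{-1})$ and the $\mathbb{S}^{1}$-fiber length is uniformly bounded, the $L^{2}_{\delta}(E)$ and $L^{2}_{\delta}(\mathbb{R}^{3})$ norms of $\phi^{I}$ are uniformly comparable. Summing over $I$ gives
\[
\int_E|\phi|^{2}r^{\delta}\,\mathrm{dVol} \;\le\; C\sum_{I}\int_{\mathbb{R}^{3}}|\Delta_{\mathbb{R}^{3}}\phi^{I}|^{2}r^{\delta+4}\,\mathrm{d}x,
\]
and then replacing $\Delta_{\mathbb{R}^{3}}\phi^{I}$ by $(\Delta_h\phi)^{I}$ introduces errors of the form $\int|a^{I}_{J}\phi^{J}|^{2}r^{\delta+4}$ and $\int|b^{I,k}_{J}\partial_k\phi^{J}|^{2}r^{\delta+4}$, which are bounded respectively by $C R^{-2}\int|\phi|^{2}r^{\delta+4}\le CR^{-2}\int|\phi|^{2}r^{\delta}\cdot \tilde R^{4}$ (harmless for annular supports if we argue more carefully) and by $\int|\nabla\phi|^{2}r^{\delta+2}\cdot O(R^{-2})$. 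To close the loop, use Lemma~\ref{weighted-L2-estimate} to bound the gradient term by $\int|\Delta\phi|^{2}r^{\delta+4}+\int|\phi|^{2}r^{\delta}$, and then absorb the $\int|\phi|^{2}r^{\delta}$ pieces into the left-hand side for $R$ large. The condition that $\delta$ is not an integer enters precisely through the denominators $(\delta+2j+d)(\delta-2j-d+4)$ with $d=3$ in Theorem~\ref{estimate-on-Rd}, which are bounded away from zero uniformly in $j\in\mathbb{Z}_{\ge 0}$.

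The main obstacle is the careful accounting of the error terms arising from $V\neq 1$ and $d\eta\neq 0$: one must verify that although these are not compactly supported, they are small enough relative to $r$ to be absorbed into the principal inequality. This is exactly where the annular support of $\phi$ (contained in $B_{\tilde R}-B_{R}$ with $R$ large) is used, so that the perturbation coefficients sit inside a region where $|a^{I}_{J}|,|b^{I,k}_{J}|$ can be made arbitrarily small. Everything else is a routine combination of Theorem~\ref{estimate-on-Rd} and Lemma~\ref{weighted-L2-estimate}.
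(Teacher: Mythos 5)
Your proposal follows essentially the same route as the paper: decompose an $\mathbb{S}^1$-invariant tensor into scalar components on $\mathbb{R}^3$ via a natural coframe, apply Theorem~\ref{estimate-on-Rd} with $d=3$, and absorb the errors coming from $V\neq 1$ and $d\eta\neq 0$ with Lemma~\ref{weighted-L2-estimate} for $R$ large. (The paper uses the coframe $\{dx_1,dx_2,dx_3,\eta\}$ rather than the normalized one, but that is a cosmetic choice.)

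One bookkeeping issue worth flagging: your claimed decomposition
\[
(\Delta_h\phi)^{I} = \Delta_{\mathbb{R}^3}\phi^{I} + a^{I}_{J}\phi^{J} + b^{I,k}_{J}\partial_k\phi^{J}
\]
omits a second-order error term. Because the Taub-NUT metric equals the flat metric only up to $O'(r^{-1})$, the principal symbol $g^{ij}$ differs from $\delta^{ij}$ by $O(r^{-1})$, so there is an additional contribution $c^{I,kl}_{J}\partial_k\partial_l\phi^J$ with $c=O(r^{-1})$; correspondingly the lower-order coefficients decay one order faster than you wrote ($b=O(r^{-2})$, $a=O(r^{-3})$, coming from $\Gamma=O(r^{-2})$ and $\partial\Gamma=O(r^{-3})$). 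This does not break the argument: the extra term contributes $O(R^{-2})\int|\nabla^2\phi|^2 r^{\delta+4}$, which Lemma~\ref{weighted-L2-estimate} bounds by $O(R^{-2})\bigl(\int|\Delta\phi|^2 r^{\delta+4}+\int|\phi|^2 r^{\delta}\bigr)$ and which is absorbed just like the first-order error. But as written, the omission would leave a reader wondering how the leading-order discrepancy between $\Delta_h$ and $\Delta_{\mathbb{R}^3}$ is controlled, so it should be stated.
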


 \begin{proof}
 By Theorem \ref{standard-fiberation}, it is enough to consider the trivial product of $\mathbb{R}^3$ and $\mathbb{S}^1$ or the Taub-NUT metric with nonzero mass $m$.
 We use 1-forms as example, the proof for general tensors is similar.
 In the trivial product case, we can write any form as $A\mathrm{d}x_1+B\mathrm{d}x_2+C\mathrm{d}x_3+D\mathrm{d}\theta$.
 In the remaining cases, any form can be written as $A\mathrm{d}x_1+B\mathrm{d}x_2+C\mathrm{d}x_3+D\eta$.
 In each case we get 4 functions on $\mathbb{R}^3-B_R$ which can be filled in by 0 on $B_R$ to get smooth functions on $\mathbb{R}^3$. So we can apply Theorem \ref{estimate-on-Rd} to them. Since the Taub-NUT metric is the flat metric with error $O'(r^{-1})$, while $\eta=\mathrm{d}\theta+O'(r^{-1})$ locally, by Lemma \ref{weighted-L2-estimate}, we can get our estimate as long as $R$ is large enough.
 \end{proof}

 \begin{theorem}
 Suppose $(E,h)$ is the standard ALF, ALG, or ALH-non-splitting metric in Theorem \ref{standard-fiberation}, $\phi$ is a smooth tensor supported in $B_{\tilde R}-B_R$. Then as long as $30\delta$ is not an integer, for large enough $R$,
 $$ \int_{E}|\phi|^2r^{\delta}\mathrm{dVol}<C \int_{E}|\Delta\phi|^2r^{\delta+4}\mathrm{dVol}$$
 \label{estimate-of-h}
 \end{theorem}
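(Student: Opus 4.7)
The plan is to reduce the general tensor case to the $\mathbb{T}^k$-invariant case already treated in the previous three theorems by Fourier decomposition along the torus fibers of $E$. After passing to a finite cover if necessary (the double cover in the ALF-$D_k$ case, and the $n$-fold covering from the proof of Theorem \ref{estimate-of-flat-elliptic-surface} when $\beta=m/n$ in the ALG case), the standard model $(E,h)$ carries a free isometric $\mathbb{T}^k$-action. Any smooth tensor $\phi$ supported in $B_{\tilde R}\setminus B_R$ then decomposes orthogonally as
\[
\phi \;=\; \phi_{0} + \sum_{\mu \neq 0} \phi_{\mu},
\]
where $\phi_0$ is the fiberwise average and $\phi_{\mu}$ ranges over non-trivial Fourier modes indexed by characters $\mu$ of $\mathbb{T}^k$. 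Because $\mathbb{T}^k$ acts by isometries, the Bochner Laplacian $\Delta=\mathrm{Tr}\,\nabla^{*}\nabla$ preserves each isotypic component and the weighted norms split as a direct sum.

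First I would dispose of the invariant piece $\phi_{0}$: this is precisely the content of Theorem \ref{estimate-of-trivial-products}, Theorem \ref{estimate-of-flat-elliptic-surface}, or Theorem \ref{estimate-of-Taub-NUT-metric}, applied according to the type of end. The hypothesis that $30\delta$ be non-integral enters only here, since the ALG classification forces the parameters $m \in \{1,2,3,5\}$ in Theorem \ref{estimate-of-flat-elliptic-surface} and $\mathrm{lcm}(1,2,3,5)=30$.

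Next I would handle the non-invariant part. The key observation is that the fiber Laplacian supplies a uniform spectral gap: on the $\mu$-th mode the vertical part of $\Delta$ acts as multiplication by $\lambda_\mu/L(p)^{2}$, where $\lambda_\mu>0$ is a fiber-torus eigenvalue and $L(p)$ measures the fiber size at $p$. Because the fiber diameter is uniformly bounded above as $p\to\infty$ (from the explicit standard models in Theorem \ref{standard-fiberation}), one has $\lambda_\mu/L(p)^{2}\geq c>0$ on $E\setminus B_R$ uniformly in $\mu$ and $p$. Pairing $\phi_\mu$ with $\Delta\phi_\mu$ with weight $r^{\delta+2}$, integrating by parts on the fibers, and applying Cauchy--Schwarz yields
\[
\int_{E}|\phi_{\mu}|^{2} r^{\delta}\,\mathrm{dVol} \;\leq\; C\int_{E}|\Delta\phi_{\mu}|^{2} r^{\delta+4}\,\mathrm{dVol},
\]
with $C$ independent of $\mu$. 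Summing over $\mu$ by Parseval and combining with the bound for $\phi_0$ gives the full estimate.

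The step I expect to be most delicate is the verification that $\Delta$ is \emph{almost} diagonal in the Fourier decomposition. Strictly speaking, the connection one-forms $\eta_{lj}$ and the non-flat horizontal distribution couple different modes; these cross-terms need to be absorbed as perturbations. However, the explicit standard-model description of Theorem \ref{standard-fiberation} guarantees the coupling terms carry at least one factor of the connection form or its derivative, hence decay like $O'(r^{-1})$. For $R$ large enough, these contributions can be absorbed into the left-hand side using Lemma \ref{weighted-L2-estimate}, exactly as was done at the end of the proof of Theorem \ref{estimate-of-Taub-NUT-metric}. The descent from a finite cover to $E$ itself is routine because the deck group acts by isometries and preserves both the weights and the Fourier decomposition.
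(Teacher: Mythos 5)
Your proposal follows essentially the same strategy as the paper: decompose $\phi=\phi_0+(\phi-\phi_0)$ via the fiberwise $\mathbb{T}^k$-average, dispatch $\phi_0$ to Theorem~\ref{estimate-of-trivial-products}, \ref{estimate-of-flat-elliptic-surface} or \ref{estimate-of-Taub-NUT-metric}, and use a fiber spectral gap for the remainder. Where you diverge is in \emph{how} the spectral gap is invoked. The paper works locally: it restricts $\phi-\phi_0$ to a ball $B_{\kappa r_i}(p_i)$, on which the weight $r^\delta$ is comparable to the constant $r_i^\delta$ and hence drops out, applies the (unweighted) Poincar\'e inequality on $\mathbb{T}^k$ plus a single integration by parts, and then patches these local estimates by the good cover and Lemma~\ref{weighted-L2-estimate}. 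You instead propose a global Fourier-mode decomposition with weighted integration by parts. That can be made to work — the extra factor $r^2$ in the weight, together with a mode-independent spectral gap $\ge c>0$, is strong enough to absorb the terms coming from $\nabla(r^{\delta+2})$ and from the partition of unity — but the paper's local reduction is cleaner because the weight never appears in the Poincar\'e step.

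One correction: your worry that $\Delta$ is ``only almost diagonal'' in the Fourier decomposition is misplaced. The $\mathbb{T}^k$-action on $(E,h)$ is isometric (the connection forms $\eta_{lj}(x)$ depend only on the base), so the Bochner Laplacian of $h$ commutes with the action and preserves each isotypic component \emph{exactly}. The coupling you are thinking of, from the forms $\eta_{lj}$ and from the horizontal curvature, mixes horizontal and vertical directions \emph{within} a fixed mode but does not couple distinct modes. The perturbative absorption via Lemma~\ref{weighted-L2-estimate} is still needed, but only to pass from the genuinely flat metric to $h$ and from $h$ to $g$, not to diagonalize $\Delta$. With that adjustment, your argument is correct and is, in substance, the paper's proof.
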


 \begin{proof}
 First average $\phi$ on each $\mathbb{T}^k$ (k=1,2,3) to get an invariant tensor $\phi_0$. Then we only need to get some estimates of the $\phi-\phi_0$ part. It is enough to prove that in each $B_{\kappa r_i}(p_i)\subset E$,
 $$\int_{B_{\kappa r_i}(p_i)}|\phi-\phi_0|^2\mathrm{dVol}<C \int_{B_{\kappa r_i}(p_i)}|\Delta(\phi-\phi_0)|^2\mathrm{dVol}$$
 for a uniform constant $C$ and any tensor $\phi$ supported in $B_{\kappa r_i}(p_i)\subset E$ because then we can use the partition of unity and move every error term to the left hand side by Lemma \ref{weighted-L2-estimate}. Again, we may cancel error terms and assume that the metric is flat. So the estimate of forms is reduced to functions which are the coefficients of the forms. Standard Poincar\'e inequality on torus implies that $$(\int_{B_{\kappa r}\times\mathbb{T}^k}|f-f_0|^2)^2
 \le C(\int_{B_{\kappa r}\times\mathbb{T}^k}|\nabla_{\mathbb{T}^k}(f-f_0)|^2)^2\le C(\int_{B_{\kappa r}\times\mathbb{T}^k}|\nabla(f-f_0)|^2)^2$$
 $$=C(\int_{B_{\kappa r}\times\mathbb{T}^k}(f-f_0)\Delta(f-f_0))^2
 \le C\int_{B_{\kappa r}\times\mathbb{T}^k}|f-f_0|^2\int_{B_{\kappa r}\times\mathbb{T}^k}|\Delta(f-f_0)|^2,$$
 where $\nabla_{\mathbb{T}^k}$ means the partial derivative with respect to the fiber direction.
 So we are done when $R$ is large enough.
 \end{proof}

 \begin{lemma}
 Suppose $X$,$Y$,$Z$ are Banach spaces,
 $D:X\rightarrow Y$, $i:X\rightarrow Z$ are bounded linear operators, $i$ is compact. Suppose $||\phi||_X\le C(||D\phi||_Y+||i\phi||_Z)$. Then as long as $\mathrm{Ker}D=\{0\}$, we have $||\phi||_X\le C||D\phi||_Y$.
 \label{functional-analysis}
 \end{lemma}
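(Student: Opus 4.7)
The plan is to argue by contradiction, which is the standard technique for promoting an estimate of the form $\|\phi\|_X \le C(\|D\phi\|_Y + \|i\phi\|_Z)$ to $\|\phi\|_X \le C\|D\phi\|_Y$ once a compactness hypothesis on $i$ and the injectivity of $D$ are available. Suppose the desired inequality fails. Then for every $n \in \mathbb{N}$ one can find $\phi_n \in X$ with $\|\phi_n\|_X = 1$ but $\|D\phi_n\|_Y < 1/n$. I would then work with this sequence.

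First I would use compactness of $i$: since $\{\phi_n\}$ is bounded in $X$ and $i : X \to Z$ is compact, there is a subsequence (still denoted $\phi_n$) such that $\{i\phi_n\}$ is Cauchy in $Z$. Next I would plug the differences $\phi_n - \phi_m$ into the hypothesized inequality to obtain
\[
\|\phi_n - \phi_m\|_X \le C\bigl(\|D(\phi_n - \phi_m)\|_Y + \|i(\phi_n - \phi_m)\|_Z\bigr).
\]
The first term on the right is bounded by $C(1/n + 1/m) \to 0$, and the second term tends to $0$ by the Cauchy property secured in the previous step. Hence $\{\phi_n\}$ is Cauchy in $X$, so it converges to some $\phi \in X$ with $\|\phi\|_X = 1$ by continuity of the norm.

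Finally, I would invoke continuity of $D$: from $\|D\phi_n\|_Y \to 0$ and $\phi_n \to \phi$ in $X$, we get $D\phi = 0$, so $\phi \in \ker D = \{0\}$, contradicting $\|\phi\|_X = 1$. This yields the conclusion. There is no serious obstacle in this argument; it is a textbook compactness-plus-injectivity argument. The only small point to keep track of is that the constant $C$ in the conclusion need not be the same as the one in the hypothesis, but both are denoted $C$ in the statement, consistent with the author's convention that $C$ is a generic constant.
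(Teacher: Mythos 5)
Your proof is correct and follows essentially the same argument as the paper: a contradiction argument extracting a unit-norm sequence with $\|D\phi_n\|_Y \to 0$, passing to a subsequence on which $i\phi_n$ is Cauchy, using the hypothesis to show $\phi_n$ is Cauchy in $X$, and then identifying the limit as a nonzero element of $\ker D$. The paper presents the same steps in slightly more compressed form.
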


 \begin{proof}
 Suppose the estimate does not hold, then there are $\phi_k$ satisfying $||\phi_k||=1$,
 but $||D\phi_k||\rightarrow 0$. By the compactness of $i$, we know that $||i\phi_k-i\phi_l||_Z\rightarrow$0.
 So $$||\phi_k-\phi_l||_X\le C(||D\phi_k-D\phi_l||_Y+||i\phi_k-i\phi_l||_Z)\rightarrow 0.$$
 So $\phi_k\rightarrow \phi_{\infty}$. But then $D\phi_k\rightarrow D\phi_{\infty}$, $D\phi_{\infty}=0$, $\phi_{\infty}\in\mathrm{Ker}D$, contradiction.
 \end{proof}

 \begin{theorem}
 Suppose $M$ is asymptotic to the standard ALF, ALG or ALH-non-splitting model, then for any tensor $\phi\in H^2_{\delta}(M)$, as long as $30\delta$ is not an integer and $-\delta/2-2+k/2<0$, we have
 $$||\phi||_{H^2_{\delta}(M)}<C\int_{M}|\Delta\phi|^2r^{\delta+4}\mathrm{dVol}.$$
 \label{estimate-of-g}
 \end{theorem}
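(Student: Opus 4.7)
The plan is to combine the model estimate Theorem \ref{estimate-of-h} on $(E,h)$ with a cut-off argument, patch the resulting inequality with an interior elliptic estimate on a compact set, and then remove the compact remainder using the functional-analytic Lemma \ref{functional-analysis}. The decay hypothesis $-\delta/2+k/2-2<0$ feeds into Lemma \ref{harmonic-form} to guarantee the triviality of the kernel in the last step.

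First I would use the diffeomorphism $\Phi\colon E\to M\setminus K$ from Theorem \ref{standard-fiberation} together with a cut-off $\chi_R=\chi(r/R)$. Splitting $\phi=\chi_R\phi+(1-\chi_R)\phi$, the piece $(1-\chi_R)\phi$ is supported outside a large ball and can be viewed via $\Phi^{-1}$ as a tensor on $E$. Applying Theorem \ref{estimate-of-h} to it gives the weighted $L^2_\delta$-control by $\Delta_h((1-\chi_R)\phi)$, while Lemma \ref{weighted-L2-estimate} promotes this to control of the full $H^2_\delta$-norm. Since $\Phi^*g=h+O'(r^{-\epsilon})$, the Laplacians differ by a first-order operator with coefficients in $O'(r^{-\epsilon})$, so $|\Delta_h\psi-\Delta_g\psi|\le C r^{-\epsilon}(|\nabla^2\psi|+r^{-1}|\nabla\psi|+r^{-2}|\psi|)$. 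Multiplying by $r^{\delta+4}$ and integrating, the resulting error terms are bounded by $CR^{-\epsilon}\|\psi\|_{H^2_\delta}^2$, which for $R$ large can be absorbed into the left side.

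Next I would handle the commutator $[\Delta,\chi_R]$, which is first order and supported where $|\nabla\chi_R|+|\nabla^2\chi_R|\ne 0$, i.e.\ in an annulus of bounded $r$. Hence those terms are dominated by $C(\|\phi\|_{L^2(B_{2R})}+\|\nabla\phi\|_{L^2(B_{2R})})$, which by the standard interior $L^2$-elliptic estimate on the fixed compact set $B_{2R}$ is controlled by $C(\|\Delta\phi\|_{L^2(B_{2R})}+\|\phi\|_{L^2(B_{2R})})$. Combining with the exterior bound on $(1-\chi_R)\phi$ and the interior $H^2$-estimate on $\chi_R\phi$ yields the semi-global inequality
\[
\|\phi\|_{H^2_\delta(M)}\;\le\;C\Bigl(\|\Delta\phi\|_{L^2_{\delta+4}(M)}+\|\phi\|_{L^2(B_{2R})}\Bigr).
\]

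Finally, I would invoke Lemma \ref{functional-analysis} with $X=H^2_\delta(M)$, $Y=L^2_{\delta+4}(M)$, $Z=L^2(B_{2R})$, $D=\Delta$ and $i$ the restriction map. Compactness of $i$ follows from Rellich–Kondrachov since elements of $H^2_\delta$ are locally in the ordinary $H^2$. The remaining point is that $\operatorname{Ker}D=\{0\}$: any $\phi\in H^2_\delta$ with $\Delta\phi=0$ is smooth by elliptic regularity and Lemma \ref{harmonic-form} gives $|\phi(y)|\le C r(y)^{-\delta/2+k/2-2}$, which under the hypothesis $-\delta/2+k/2-2<0$ forces $\phi\to 0$ at infinity, so that the maximum principle applied to $|\phi|^2$ (whose Laplacian is non-negative by Bochner) yields $\phi\equiv 0$. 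The main obstacle is really the bookkeeping in Step 2, namely organising the absorption of the $O'(r^{-\epsilon})$ metric-perturbation terms and the compactly-supported commutator terms so that each piece lands in the right weighted norm; the exclusion of integer values of $30\delta$ is precisely what Theorems \ref{estimate-of-flat-elliptic-surface}, \ref{estimate-of-Taub-NUT-metric} and \ref{estimate-of-trivial-products} require to invoke the Hardy-type inequality on the model.
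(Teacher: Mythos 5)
Your argument is correct and follows essentially the same route as the paper: reduce to the exterior region via a cut-off, apply the model estimate of Theorem \ref{estimate-of-h} together with Lemma \ref{weighted-L2-estimate} and absorb the $O'(r^{-\epsilon})$ discrepancy between $\Delta_g$ and $\Delta_h$, control the compactly supported commutator/interior terms by an interior elliptic estimate, and conclude with Lemma \ref{functional-analysis} plus Rellich compactness, using Lemma \ref{harmonic-form} (where the condition $-\delta/2+k/2-2<0$ is used) to see that $\operatorname{Ker}\Delta$ is trivial. The paper's only cosmetic differences are that it works first with compactly supported tensors, cuts off with $(1-\chi(r/R))^2\phi$ rather than $(1-\chi_R)\phi$, and simply cites Lemma \ref{harmonic-form} for the kernel vanishing instead of re-running the maximum-principle argument.
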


 \begin{proof}
 It is enough to prove everything for $\mathrm{C}^{\infty}_0$. Note that $$\Delta_g\phi=\Delta_h\phi+O(r^{-\epsilon})|\nabla^2\phi|+O(r^{-\epsilon-1})|\nabla\phi|+O(r^{-\epsilon-2})|\phi|.$$
 After applying Theorem \ref{estimate-of-h} and Lemma \ref{weighted-L2-estimate}, we know that the estimate holds as long as $\phi$ is 0 inside a big enough ball $B_R$. For general $\phi$, we can apply the estimate to the form $(1-\chi(r/R))^2\phi$. So
 $$||\phi||_{H^2_{\delta}(M)}<C(\int_{M}|\Delta\phi|^2r^{\delta+4}\mathrm{dVol}+||\phi||_{H^2(B_{2R})})$$
 $$<C(\int_{M}|\Delta\phi|^2r^{\delta+4}\mathrm{dVol}+\int_{B_{4R}}|\phi|^2)$$
 by Theorem 9.11 of \cite{GilbargTrudinger}.
 By Lemma \ref{functional-analysis} and Rellich's lemma, it is enough to prove that Ker$\Delta$=\{0\}.
 This follows from Lemma \ref{harmonic-form}.
 \end{proof}

 \begin{theorem}
 Suppose $30\delta$ is not an integer and $-\delta/2-2+k/2<0$. For any $\phi\in L^2_{-\delta}(M)$, there exists a tensor $\psi\in H^2_{-\delta-4}(M)$ such that $\Delta\psi=\phi$.
 \label{solve-laplacian-equation}
 \end{theorem}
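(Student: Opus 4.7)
The plan is to deduce existence from the a priori estimate of Theorem \ref{estimate-of-g} by a Hahn-Banach / Riesz duality argument, and then to upgrade regularity from $L^2_{-\delta-4}$ to $H^2_{-\delta-4}$ by standard interior elliptic theory.

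First, Theorem \ref{estimate-of-g} at weight $\delta$ says that $\Delta:H^2_\delta(M)\to L^2_{\delta+4}(M)$ is an injective bounded operator satisfying $\|\chi\|_{H^2_\delta}\le C\|\Delta\chi\|_{L^2_{\delta+4}}$, so its range is closed. Define a linear functional on the subspace $V:=\Delta(\mathrm{C}^{\infty}_0)\subset L^2_{\delta+4}$ by $F(\Delta\chi):=\int_M\phi\cdot\chi\,\mathrm{dVol}$. This is well defined because $\Delta$ is injective on $H^2_\delta$, and continuous because weighted Cauchy-Schwarz gives $|F(\Delta\chi)|\le\|\phi\|_{L^2_{-\delta}}\|\chi\|_{L^2_\delta}\le C\|\phi\|_{L^2_{-\delta}}\|\Delta\chi\|_{L^2_{\delta+4}}$. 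I would then apply Hahn-Banach to extend $F$ to a bounded linear functional on all of $L^2_{\delta+4}$, and apply Riesz representation: the unweighted pairing $\int\eta\,\xi\,\mathrm{dVol}$ identifies the continuous dual of $L^2_{\delta+4}$ with $L^2_{-\delta-4}$, so the extension is represented by some $\psi\in L^2_{-\delta-4}$ with $F(\eta)=\int\psi\,\eta\,\mathrm{dVol}$. Specialising $\eta=\Delta\chi$ for $\chi\in\mathrm{C}^{\infty}_0$ yields $\int\psi\,\Delta\chi=\int\phi\,\chi$, i.e.\ $\Delta\psi=\phi$ in the distributional sense, using self-adjointness of $\Delta=\mathrm{Tr}\,\nabla^*\nabla$.

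To upgrade from $\psi\in L^2_{-\delta-4}$ to $\psi\in H^2_{-\delta-4}$, I would use the good cover $\{B_{\kappa r_j/2}(p_j)\}$ of Lemma \ref{good-cover}. On each $B_{\kappa r_j}(p_j)$, rescaling to a unit ball reduces matters to the interior $W^{2,2}$ estimate of Theorem 9.11 in \cite{GilbargTrudinger} applied to $\Delta\psi=\phi$; multiplying the resulting bounds by the appropriate powers of $r_j$ and summing over the $N$-colored subdivision converts them into $\nabla\psi\in L^2_{-\delta-2}$ and $\nabla^2\psi\in L^2_{-\delta}$, placing $\psi$ in $H^2_{-\delta-4}$. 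The main obstacle is not the duality step itself, which is clean, but rather the arithmetic of weights: the exclusion $30\delta\notin\mathbb{Z}$ is forced by the resonance analysis in Theorem \ref{estimate-of-g} and cannot be weakened, while the bound $-\delta/2-2+k/2<0$ is precisely what forces the kernel of $\Delta$ on $H^2_\delta$ to be trivial via Lemma \ref{harmonic-form}, delivering the injectivity that underlies the whole construction.
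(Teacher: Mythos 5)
Your proposal is correct and follows essentially the same route as the paper: the a priori estimate of Theorem \ref{estimate-of-g} gives a bounded linear functional, duality produces a weak solution in $L^2_{-\delta-4}$, and interior elliptic estimates upgrade it to $H^2_{-\delta-4}$. The only cosmetic difference is that you invoke Hahn--Banach plus Riesz with the unweighted pairing, whereas the paper applies Riesz directly on the closed range of the formal adjoint $\Delta^*\theta=r^{\delta+4}\Delta(r^{-\delta}\theta)$ in the weighted Hilbert space $L^2_{-\delta-4}$; the two are equivalent.
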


 \begin{proof}
 Consider the Laplacian operator $\Delta:L^2_{-\delta-4}\rightarrow L^2_{-\delta}$.
 The formal adjoint is then $\Delta^*\phi=r^{\delta+4}\Delta(r^{-\delta}\phi)$.
 Apply Theorem \ref{estimate-of-g} to $r^{-\delta}\phi$,
 $$C^{-1}||\Delta^*\phi||_{L^2_{-\delta-4}}\le||\phi||_{H^2_{-\delta}}=||r^{-\delta}\phi||_{H^2_{\delta}}\le C||\Delta^*\phi||_{L^2_{-\delta-4}}.$$
 So $\Delta^*$ has closed range. Now $$|(\phi,\theta)_{L^2_{-\delta}}|\le||\phi||_{L^2_{-\delta}}||\theta||_{L^2_{-\delta}}\le C||\phi||_{L^2_{-\delta}}||\Delta^*\theta||_{L^2_{-\delta-4}},$$ so $\Delta^*\theta\rightarrow(\phi,\theta)_{L^2_{-\delta}}$ defines a bound linear function in the range of $\Delta^*$. By Riesz representation theorem, there exists $\psi\in\mathrm{Im}(\Delta^*)$ such that $(\psi,\Delta^*\theta)_{L^2_{-\delta-4}}=(\phi,\theta)_{L^2_{-\delta}}$. Now we get the theorem from the standard elliptic regularity theory.
 \end{proof}

\subsection{Holomorphic functions on ALF and ALG instantons}

 After proving the main estimate in the last subsection, we are ready to prove the existence of global holomorphic functions on both ALF and ALG instatons. Our first theorem deal with the growth order of harmonic functions on $M$

 \begin{theorem}
 Suppose $M$ is asymptotic to the standard ALF or ALG model. Given any harmonic function $f\in L^2_{\delta}(M)$ for some $\delta$, there exist an $\gamma$ such that $f$ is $O(r^\gamma)$ but not $o(r^\gamma)$. What is more, when $C(S(\infty))=\mathbb{C}_\beta$(ALG), $\beta\gamma$ must be an integer. When $C(S(\infty))=\mathbb{R}^3$(ALF-$A_k$), $\gamma$ must be an integer. When $C(S(\infty))=\mathbb{R}^3/\mathbb{Z}_2$(ALF-$D_k$), $\gamma$ must be an even number.
 \label{growth-rate}
 \end{theorem}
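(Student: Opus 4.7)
The plan is to transfer the problem to the standard model $(E,h)$ via the diffeomorphism from Theorem~\ref{standard-fiberation}, perform separation of variables there, and then bootstrap the conclusion back to $M$ using the weighted elliptic theory of the previous subsection. Pulled back to $E\setminus B_R$, the function $f$ satisfies $\Delta_h f = O'(r^{-\epsilon})(\nabla_h^2 f + r^{-1}\nabla_h f + r^{-2}f)$, and Lemma~\ref{harmonic-form} already provides the a priori polynomial bound $|f|\le C r^{-\delta/2+k/2-2}$. Let $\gamma_0=\inf\{\gamma : f=O(r^\gamma)\}$; the goal is to show $\gamma_0$ is attained and lies in the claimed discrete list.

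First I would kill the torus-non-invariant modes. Write $f = f_{\mathrm{inv}} + f_\perp$ with $f_{\mathrm{inv}}$ the fiberwise average over $\mathbb{T}^k$. On the flat model each non-trivial fiber Fourier mode $f_\perp^{(m)}$ decouples from the base and solves a shifted equation $\Delta_B u = (\lambda_m/L^2 + O(r^{-\epsilon}))u$ with $\lambda_m>0$ bounded below. Standard ODE asymptotics (modified Bessel type on the ALF base, or the obvious analogue on the ALG cone) force $f_\perp^{(m)}$ to decay or grow exponentially in $r$; the polynomial a priori bound rules out the growing exponentials, so $f_\perp$ is exponentially small and $\gamma_0$ is determined entirely by $f_{\mathrm{inv}}$.

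Next I would separate variables on the base cone. The function $f_{\mathrm{inv}}$ descends to an almost harmonic function on $C(S(\infty))\setminus B_R$; expanding in Laplace eigenfunctions on $S(\infty)$ gives a family of Euler ODEs $c_j''+\tfrac{d-1}{r}c_j'-\tfrac{\mu_j}{r^2}c_j=0$, whose radial exponents $\gamma_j^\pm$ satisfy $\gamma(\gamma+d-2)=\mu_j$. In the ALF-$A_k$ case $S(\infty)=\mathbb{S}^2$ has $\mu_j=j(j+1)$, giving $\gamma\in\{j,-j-1\}\subset\mathbb{Z}$; in the ALF-$D_k$ case only even spherical harmonics survive the $\mathbb{Z}_2$ quotient, so $\gamma$ is an even integer; in the ALG case $S(\infty)$ is a circle of circumference $2\pi\beta$ with $\mu_n=n^2/\beta^2$, yielding $\gamma=\pm n/\beta$ and hence $\beta\gamma\in\mathbb{Z}$.

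Finally I would bootstrap from the model to $M$. Given a candidate leading rate $\gamma^*$ from the discrete list above, build the corresponding $h$-harmonic function $F^*$ on $E$ with the matching leading coefficient, pull it back via the diffeomorphism, cut off by $\chi$, and consider $f-\chi F^*$. Its Laplacian is supported in an annulus and controlled by $r^{\gamma^*-2-\epsilon}$ thanks to the metric asymptotics. Choose a weight $\delta'$ in the gap between $\gamma^*$ and the next admissible rate, which is possible because the admissible rates form a discrete set while the condition $30\delta'\notin\mathbb{Z}$ excludes only a countable set of forbidden weights. Applying Theorem~\ref{solve-laplacian-equation} at weight $\delta'$ produces $\psi$ with $\Delta\psi = \Delta(\chi F^* - f)$, so $f-\chi F^*+\psi$ is a harmonic function of strictly lower order by Lemma~\ref{harmonic-form}. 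Iterating isolates $\gamma_0=\gamma^*$ on the correct side of the next gap. The main obstacle is this final step: one must verify at each iteration that the correction $\psi$ delivered by Theorem~\ref{solve-laplacian-equation} is genuinely of lower order than $F^*$ on $M$, which requires threading the weights through the discrete gaps of admissible rates while carefully tracking the $O(r^{-\epsilon})$ discrepancy between $h$ and $g$.
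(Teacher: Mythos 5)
Your plan reaches the same conclusion and shares the first two reductions with the paper (kill the $\mathbb{T}^k$-non-invariant modes, then analyze the invariant part on the cone), but after that the two arguments diverge. You do an explicit indicial analysis on $C(S(\infty))$: expand in Laplace eigenfunctions of $S(\infty)$, read off the exponents $\gamma$ from the resulting Euler ODEs, and then iteratively subtract model harmonic functions using Theorem~\ref{solve-laplacian-equation}. The paper instead cuts $f$ off so it vanishes in a ball, pushes the (globalized) invariant function $\tilde f$ to the flat cover $\mathbb{R}^3$ or, in the ALG case, to $\mathbb{R}^2$ via $z\mapsto z^\beta$, solves $\Delta\psi=\Delta\tilde f$ there with $\psi$ of lower order, and then invokes the Liouville-type fact that a globally defined harmonic function on $\mathbb{R}^n$ of polynomial growth is a polynomial. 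The polynomial degree is automatically a nonnegative integer (and even in the $\mathbb{Z}_2$-invariant case), which gives all the claimed constraints at once. Your route is more computational and arguably more transparent about where the exponents come from; the paper's route is shorter because the globalization to $\mathbb{R}^n$ and the polynomial Liouville theorem replace the case analysis of indicial roots entirely.

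One point where your outline is genuinely incomplete: for $S(\infty)=\mathbb{RP}^2$ the Euler exponents are $\gamma=j$ and $\gamma=-j-1$ with $j$ even, so the indicial data alone produces odd rates (the decaying root $-j-1$). Concluding \emph{$\gamma$ is an even integer} directly from \emph{only even spherical harmonics survive} is therefore a nonsequitur. What removes the odd rates is global in nature: a decaying harmonic function on a complete $M$ is constant (maximum principle) hence zero, so only $\gamma\ge 0$ occurs, and among those only the growing root $\gamma=j$ with $j$ even survives. In the paper this is built into the Liouville step (polynomials have nonnegative integer degree); in your bootstrap you would need to make it explicit, either by an initial maximum-principle reduction to $\gamma\ge 0$, or by observing that when you feed a trial rate $\gamma^*<0$ into Theorem~\ref{solve-laplacian-equation} and Lemma~\ref{harmonic-form} the remainder is again harmonic and decaying, so the iteration forces it to vanish. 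A related loose end is that matching the leading coefficient of $F^*$ to $f$ at rate $\gamma^*$ presupposes that this coefficient exists and is extractable; the paper avoids this by analyzing $\tilde f$ directly rather than constructing a competitor.
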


 \begin{proof}
 $f$ also belongs to $L^2_{\delta'}(M)$ for other $\delta'$. Without of loss of generality, assume $\delta$ is bigger than the superior of those $\delta'$ minus $\epsilon$. The superior exists because of the vanishing part of Theorem \ref{harmonic-form}. By Lemma \ref{weighted-L2-estimate}, $f\in H^2_{\delta}(M)$. Cut off $f$ so that it vanish inside a large ball $B_R$. Move this function to $E$. Then $\Delta(f(1-\chi(r/R))\in L^2_{\delta+4+\epsilon}$. Decompose $f(1-\chi(r/R))$ into $\mathbb{T}^k$-invariant part $f_0$ and the perpendicular part $f_1$.

 Then $f_1$ is much smaller than the growth rate of $f(1-\chi)$. Without loss of generality, we can assume that $f(1-\chi(r/R))$ is invariant.

 Now again, we can transfer this invariant function to the tangent cone at infinity $C(S(\infty))$. When $C(S(\infty))=\mathbb{R}^3/\mathbb{Z}_2$ (ALF-$D_k$), we get a function $\tilde f$ on its double cover $\mathbb{R}^3$ naturally. When $C(S(\infty))=\mathbb{C}_\beta$ (ALG), we get a function $\tilde f(z)$ on $\mathbb{C}=\mathbb{R}^2$ defined by $\tilde f=(f(1-\chi(r/R)))(z^\beta)$. Again the growth rate of $\Delta(\tilde f)$ is at most the growth rate of $\tilde f$ minus two then minus $\epsilon$, so we can find out a function $\psi$ with growth rate the rate of $\tilde f$ minus $\epsilon$ such that $\Delta\psi=\Delta(\tilde f)$. So $\tilde f-\psi$ becomes a harmonic function on $\mathbb{R}^3$ or $\mathbb{R}^2$. The gradient estimate implies that after taking derivatives for some times, we get 0. In other word $\tilde f-\psi$ must be a polynomial. So the growth rate must be integer. For the $C(S(\infty))=\mathbb{R}^3/\mathbb{Z}_2$(ALF-$D_k$) case, we may replace $\psi(x)$ by $(\psi(x)+\psi(-x))/2$ so that it is invariant under the $\mathbb{Z}_2$ action. So the polynomial must have even degree.
 \end{proof}

 Now we can prove the existence of global holomorphic function on ALG gravitational instantons.

 \begin{theorem}
 There exists a global holomorphic function on any ALG gravitational instanton $M$ such that any far enough fiber is biholomorphic to a complex torus.
 \label{ALG}
 \end{theorem}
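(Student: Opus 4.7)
Following the strategy sketched in the introduction to Section~3, my plan is to obtain $F$ as a $\bar\partial$-correction of a model holomorphic function on the ALG end. Let $n$ be the smallest positive integer with $n\beta\in\mathbb{Z}$ (so $n\in\{1,2,3,4,6\}$ according to $\beta$). On the standard model $(E,h)$ of Theorem~\ref{standard-fiberation} the function $u^n$ is invariant under the monodromy $(u,v)\mapsto(e^{2\pi i\beta}u,e^{-2\pi i\beta}v)$, hence descends to a globally defined $h$-holomorphic function on $E$ whose level sets are the flat tori $\mathbb{C}/\Lambda$ (the $n$ roots $u=c^{1/n}e^{2\pi ik/n}$ are permuted transitively by the cyclic monodromy of order $n$, collapsing to a single torus). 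Using the diffeomorphism $\Phi\colon E\to M\setminus K$ and a cut-off, pull $u^n$ back to a smooth function $f$ on $M$ with $f\equiv 0$ on $K$; by the $O'(r^{-\epsilon})$-closeness of $g$ to $\Phi^{*}h$ and the choice of complex structure $a_1I+a_2J+a_3K$ recorded at the end of Theorem~\ref{standard-fiberation}, one has $\bar\partial_g f=O'(r^{n-1-\epsilon})$.

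Next I would carry out the Hodge correction. The Weitzenb\"ock formula on a Ricci-flat K\"ahler $4$-manifold identifies the Bochner Laplacian $\Delta=\mathrm{Tr}\,\nabla^{*}\nabla$ on $(0,1)$-forms with $-(\bar\partial\bar\partial^{*}+\bar\partial^{*}\bar\partial)$. Choose $\delta$ avoiding the forbidden set of Theorem~\ref{estimate-of-g} and Theorem~\ref{solve-laplacian-equation} large enough that $\bar\partial f\in L^{2}_{-\delta}$ and $-\delta/2-2+k/2<0$ with $k=2$. Theorem~\ref{solve-laplacian-equation} then produces a $(0,1)$-form $\phi\in H^{2}_{-\delta-4}$ with $\Delta\phi=\bar\partial f$, and Lemma~\ref{harmonic-form} gives the pointwise decay $|\phi|\le Cr^{-\delta/2+k/2-2}$, so that $\phi$ is asymptotically much smaller than $f$.

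The key claim is $\bar\partial\phi=0$. Apply $\bar\partial$ to $\Delta\phi=\bar\partial f$: using $\bar\partial^{2}=0$ and the commutation of $\bar\partial$ with $\Delta$ on a hyperk\"ahler manifold, one sees that $\bar\partial\phi$ is a harmonic $(0,2)$-form. The covariant constant antiholomorphic symplectic form $\omega^{-}$ trivializes $\Lambda^{0,2}T^{*}M$, so $\bar\partial\phi=h\,\omega^{-}$ for some smooth function $h$; since $\omega^{-}$ is parallel and of constant length, harmonicity of $\bar\partial\phi$ is equivalent to $\Delta h=0$, and $|h|$ inherits the decay of $\phi$. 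The vanishing clause of Lemma~\ref{harmonic-form} then forces $h\equiv 0$, i.e.\ $\bar\partial\phi=0$. Consequently $-\bar\partial\bar\partial^{*}\phi=\bar\partial f$, and
$$F:=f+\bar\partial^{*}\phi$$
is a global holomorphic function on $M$. From $|\bar\partial^{*}\phi|=o(r^{n-1})$ one reads $\Phi^{*}F=u^{n}+o(r^{n-1})$ with gradient $\sim nr^{n-1}$, so $dF\neq0$ and $F$ is a submersion near infinity, its fibers $C^\infty$-close to the model tori; each such fiber is thus a smooth compact Riemann surface of genus one, hence biholomorphic to a complex torus.

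I expect the principal difficulty to lie in simultaneously satisfying all the weight constraints: $\delta$ must avoid the non-integrality exceptions of Theorem~\ref{solve-laplacian-equation}, must make $\bar\partial f$ — which decays only polynomially from the metric mismatch and is not compactly supported — lie in $L^{2}_{-\delta}$, and must still yield $\phi$ decaying fast enough for the vanishing part of Lemma~\ref{harmonic-form} to force $\bar\partial\phi\equiv 0$. A secondary subtlety is the identification of the asymptotic fiber structure: one has to check that the monodromy of $\{u^{n}=c\}$ on $E$ glues the $n$ sheets into a single torus (via the $\gcd(p,q)=1$ argument when $\beta=p/q$ in lowest terms) and that this identification is preserved under the $C^\infty$-small perturbation $\Phi^{*}F-u^{n}$.
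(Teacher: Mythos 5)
Your approach follows the same broad strategy as the paper --- model holomorphic function plus a Hodge-theoretic $\bar\partial$-correction --- but the vanishing argument for $\bar\partial\phi$ has a genuine gap that cannot be closed with the tools you invoke, and you have in fact flagged exactly the right concern in your last paragraph without realizing it is fatal rather than merely delicate.

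\textbf{The weight constraints are incompatible.} With $\bar\partial f=O(r^{n-1-\epsilon})$ and the ALG volume form behaving like $r\,\mathrm{d}r$ on the base (the fiber dimension is $k=2$), the condition $\bar\partial f\in L^2_{-\delta}$ forces $\delta>2n-2\epsilon$, so $\delta$ is strictly positive, in fact $\delta\ge 2-2\epsilon$ at best. The harmonic $(0,2)$-form $\bar\partial\phi$ then lies at best in $H^2_{-\delta-2}$, and Lemma~\ref{harmonic-form} gives the pointwise bound $|\bar\partial\phi|\le Cr^{-(-\delta-2)/2+k/2-2}=Cr^{\delta/2}$, which \emph{grows}. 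The vanishing clause of Lemma~\ref{harmonic-form} applies only when $-(-\delta-2)/2+k/2-2<0$, i.e.\ $\delta<0$, which is never satisfied. So ``$h\equiv 0$'' does not follow from Lemma~\ref{harmonic-form} alone, and your construction of $F=f+\bar\partial^*\phi$ stalls at the key step.

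\textbf{What the paper does instead.} Two changes are needed, and both are present in the paper's proof. First, one should take $u^{1/\beta}$ rather than $u^{n}$ as the model function; these coincide only when $\beta=1/n$, and for the starred types ($\beta=5/6,\,3/4,\,2/3$) one has $1/\beta<n$, so $u^{1/\beta}$ is the smallest monodromy-invariant holomorphic function on $E$. Second --- and this is the key ingredient you are missing --- the paper uses Theorem~\ref{growth-rate}, which quantizes the admissible growth rates of harmonic functions on an ALG instanton: if $\gamma$ is the growth order then $\beta\gamma\in\mathbb{Z}$. Writing $\bar\partial\psi=\xi\omega^{+}$ with $\xi$ harmonic, Lemma~\ref{harmonic-form} gives $\xi=O(r^{1/\beta-\epsilon/2})$; the only quantized rates $\le 1/\beta-\epsilon/2$ are nonpositive, so $\xi$ is constant (a decaying harmonic function is zero by the vanishing clause, and a nonzero constant is still fine, since then $\bar\partial^*\bar\partial\psi=\xi\,\bar\partial^*\omega^{+}=0$ because $\omega^{+}$ is parallel, whence $\bar\partial(f+\bar\partial^*\psi)=0$ all the same). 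Had you kept $u^{n}$, the bound on $\xi$ would only be $O(r^{n-\epsilon})$, which for $\beta$ not of the form $1/n$ admits several nonzero quantized rates $1/\beta,2/\beta,\dots$, so even Theorem~\ref{growth-rate} would not force constancy. The remainder of your argument (growth comparison $|\mathrm{d}\bar\partial^*\psi|\ll|\mathrm{d}f|$, genus count of far fibers, uniformization) agrees with the paper.
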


 \begin{proof}
 In this case $k=2$. By theorem \ref{standard-fiberation}, the metric near infinity is asymptotic to the elliptic surface $(E,h)$. For $(E,h)$, $u^{1/\beta}$ is a well defined holomorphic function outside $B_R$. Now if we pull back  $u^{1/\beta}$ from the elliptic surface, cut it off and fill in with 0 inside $K$, we obtain a function $f$ satisfying $$\bar\partial_g f=\phi=O(r^{1/\beta-1-\epsilon}).$$
 Pick any small positive number  $\delta\in(\max\{-2,2/\beta-2\epsilon\},2/\beta-\epsilon)$, such that $30\delta$ is not an integer. Thus, $\phi\in L^2_{-\delta}$. By Theorem \ref{solve-laplacian-equation}, there exists $\psi\in H^2_{-\delta-4}$ such that
 $$\phi=\Delta\psi=-(\bar\partial^*\bar\partial+\bar\partial\bar\partial^*)\psi$$
 in the distribution sense. Elliptic regularity implies that $\psi$ is a smooth $(0,1)$-form. Take $\bar\partial$ on both side of this equation. Notice that $\bar\partial\phi = 0.\;$ Thus
 \[
 0=-\bar\partial\bar\partial^*(\bar\partial\psi)=\Delta(\bar\partial\psi).\]  By Lemma \ref{harmonic-form},   $\bar\partial\psi = O(r^{1/\beta-\epsilon/2})$. We can write this (0,2) form as $\xi\omega^+$, where $\omega^+$ is the parallel (0,2)-form. Then $\xi$ is a harmonic function. By Theorem \ref{growth-rate}, it is constant. Therefore $\bar\partial(f+\bar\partial^*\psi)=0$. So $f+\bar\partial^*\psi$ is a global holomorphic function. After analyzing the growth rate, we can also show that $|\mathrm{d}\bar\partial^*\psi|<<|\mathrm{d}f|$ for large $r$. So the fiber far from origin is an compact Riemann surface with genus 1. It must be a complex torus by the uniformization theorem.
 \end{proof}

 Similarly, we can prove

 \begin{theorem}
 There exists a global holomorphic function on any ALF-$D_k$ gravitational instanton $M$.
 \label{ALF-Dk}
 \end{theorem}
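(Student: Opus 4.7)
The plan is to follow the blueprint used for Theorem~\ref{ALG}, with the only essentially new ingredient being the choice of an approximate holomorphic function on the standard model that is compatible with the binary dihedral quotient, together with the resulting shift in growth exponents.

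On the Taub-NUT model, in the complex structure for which $u := x_2+ix_3$ is holomorphic, the generator $\sigma$ fixes $u$ (it only rotates the circle fibre) while $\tau$ negates it; hence $u^2$ descends to a holomorphic function on the standard ALF-$D_k$ model $(E,h)$. Using the asymptotic diffeomorphism from Theorem~\ref{standard-fiberation}, I would pull $u^2$ back to $M$, cut off inside a compact set, and extend by zero to produce a smooth function $f$ with $\bar\partial_g f=\phi$. Because $\bar\partial_g=\bar\partial_h+O(r^{-\epsilon})\nabla_h$ and $|\nabla u^2|=O(r)$, the error $\phi$ is $O'(r^{1-\epsilon})$ and supported outside a compact set. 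The ALF volume growth $r^3$ then gives $\phi\in L^2_{-\delta}$ as soon as $\delta>5-2\epsilon$. I pick $\delta$ in the window $(5-2\epsilon,\,5)$ avoiding $30\delta\in\mathbb{Z}$; the window has length $2\epsilon<1/50$, so it contains at most one forbidden value and a valid choice always exists. Both hypotheses of Theorem~\ref{solve-laplacian-equation} (with $k=1$ the second condition $-\delta/2-2+k/2<0$ becomes $\delta>-3$) are satisfied, and we obtain $\psi\in H^2_{-\delta-4}$ with $\Delta\psi=\phi$.

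Applying $\bar\partial$ and using $\bar\partial\phi=0$ together with the vanishing of $\bar\partial$ on $(0,2)$-forms on a complex surface yields $\Delta(\bar\partial\psi)=0$. Writing $\bar\partial\psi=\xi\,\omega^-$ for the parallel $(0,2)$-form $\omega^-$, $\xi$ is then a harmonic function. Lemma~\ref{harmonic-form}, applied with the weight $-\delta-2$ inherited from $\bar\partial\psi\in L^2_{-\delta-2}$, yields $|\xi|=O(r^{\delta/2-1/2})$, strictly less than $r^2$ by the upper bound $\delta<5$. Theorem~\ref{growth-rate} says every harmonic function on an ALF-$D_k$ instanton has growth rate a non-negative even integer, so a rate strictly below $2$ forces $\xi$ to be bounded, and Yau's Liouville theorem (valid on the Ricci-flat $M$) makes it a constant. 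Because $\omega^-$ is parallel, $\bar\partial^*\bar\partial\psi=0$, and combining with $\Delta\psi=-(\bar\partial\bar\partial^*+\bar\partial^*\bar\partial)\psi=\bar\partial f$ gives $\bar\partial(f+\bar\partial^*\psi)=0$. A growth comparison, $|\bar\partial^*\psi|=O(r^{\delta/2-1/2})$ versus $|f|\sim r^2$, ensures that $f+\bar\partial^*\psi$ is non-constant and so is a genuine global holomorphic function on $M$.

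The main obstacle I expect is the step forcing $\xi$ to be constant: the pointwise bound coming out of the weighted estimates sits just below the critical exponent $2$, and the conclusion depends essentially on the parity refinement in Theorem~\ref{growth-rate} that distinguishes ALF-$D_k$ from ALF-$A_k$. This is also the reason one is forced to start with $u^2$ rather than $u$: invariance under $D_{4|e|}$ makes the degree-$2$ object the smallest available candidate, and the resulting weight window $(5-2\epsilon,\,5)$ is exactly what the even-integer rule can close off. A minor technical check is that the window is nonempty and contains values with $30\delta\notin\mathbb{Z}$, which follows from $\epsilon<1/100$.
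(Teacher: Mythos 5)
Your proof is correct and follows the same route the paper indicates: the paper's own argument for Theorem~\ref{ALF-Dk} is a two-line remark—take $(x_2+ix_3)^2$ and run the Theorem~\ref{ALG} argument verbatim—and you have filled in exactly those details. Your choice of $u^2$ with $u=x_2+ix_3$ is the right one (the twistor computation in Section~4.8 shows $I^*(dx_2+i\,dx_3)=i(dx_2+i\,dx_3)$, so $u$ is $I$-holomorphic, and $\tau:\mathbf{x}\mapsto-\mathbf{x}$ forces the square); the weight window $\delta\in(5-2\epsilon,5)$ and the $k=1$ exponent $|\bar\partial\psi|=O(r^{\delta/2-1/2})$ from Lemma~\ref{harmonic-form} match the computation implicit in the paper; and the parity refinement in Theorem~\ref{growth-rate} (growth rate an even integer for ALF-$D_k$) is indeed the ALF-$D_k$-specific fact that closes the gap below $r^2$. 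Two cosmetic points: the paper's ALG proof writes the parallel $(0,2)$-form as ``$\omega^+$'', which is a typo (it is ``$\omega^-$'' in the introduction to Section~4 and in your writeup), so your notation is the correct one; and the paper compresses the ``bounded harmonic $\Rightarrow$ constant'' step into ``By Theorem~\ref{growth-rate}, it's constant'', whereas you make the Liouville-type conclusion explicit—harmless and arguably clearer, since Theorem~\ref{growth-rate} as stated only pins down the growth exponent.
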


 \begin{proof}
 $M$ is asymptotic to a fiberation over $\mathbb{R}^3/\mathbb{Z}_2=\mathbb{R}^3/\mathbf{x}\sim-\mathbf{x}$. The function $(x_2+ix_3)^2$ is well defined over $E$. The proof of the last theorem will produce a global holomorphic function in ALF-$D_k$ case.
 \end{proof}

 The existence of global holomorphic function on any ALF-$A_k$ gravitational instanton $M$ can also be proved by the same way. Actually, Minerbe had a simpler proof in \cite{MinerbeMultiTaubNUT}. It is an essential step in his classification of ALF-$A_k$ instantons.

 \subsection{Exponential decay of curvature of ALH instantons}

 For ALH-non-splitting instantons, there is a self-improvement forcing the curvature to decay exponentially. Therefore, the metric must converge to the flat one exponentially.

 \begin{proposition}
 If the Ricci curvature is 0, then
 $$\Delta R_{ijkl}=Q(\mathrm{Rm}).$$
 \end{proposition}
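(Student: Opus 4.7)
The plan is to derive the identity from the second Bianchi identity together with the Ricci identity for commuting covariant derivatives, exploiting Ricci-flatness to kill the only linear term that would otherwise appear.

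First, I would take the second Bianchi identity in the form
\[
\nabla_m R_{ijkl} + \nabla_k R_{ijlm} + \nabla_l R_{ijmk} = 0,
\]
and apply $\nabla^m$ (i.e.\ trace with $g^{mn}\nabla_n$). This rewrites the Bochner Laplacian as
\[
\Delta R_{ijkl} = -\nabla^m \nabla_k R_{ijlm} - \nabla^m \nabla_l R_{ijmk}.
\]

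Next, I would commute the covariant derivatives via the Ricci identity $[\nabla^m, \nabla_k] T = \mathrm{Rm} \ast T$, which turns $\nabla^m \nabla_k R_{ijlm}$ into $\nabla_k \nabla^m R_{ijlm}$ plus a contraction of $\mathrm{Rm}$ with $\mathrm{Rm}$. The key observation is that the remaining divergence term $\nabla^m R_{ijlm}$ is controlled by the contracted second Bianchi identity $\nabla^m R_{mjkl} = \nabla_k R_{jl} - \nabla_l R_{jk}$ (after using the symmetries of the Riemann tensor to put the contracted index in the first slot); since $\mathrm{Ric} \equiv 0$, this divergence vanishes identically. The same argument applies to the second term.

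After these cancellations, the only surviving contributions are the commutator terms, each of which is a bilinear contraction of two copies of $\mathrm{Rm}$; writing them collectively as $Q(\mathrm{Rm})$ gives the desired identity. There is no real obstacle here — the computation is essentially a bookkeeping exercise with index symmetries; the only mild subtlety is being careful with sign conventions and the placement of the contracted index so that the contracted Bianchi identity can be applied cleanly to conclude $\nabla^m R_{ijlm} = 0$ in the Ricci-flat setting.
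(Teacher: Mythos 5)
Your proposal is correct and follows essentially the same path as the paper's proof: apply $\nabla^m$ to the second Bianchi identity, commute the resulting second covariant derivatives at the cost of $\mathrm{Rm}*\mathrm{Rm}$ terms, and then kill $\nabla^m R_{ijlm}$ using the contracted second Bianchi identity together with $\mathrm{Ric}\equiv 0$. The paper carries out exactly this sequence of steps in index notation.
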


 \begin{proof}
 $$\Delta R_{ijkl}=R_{ijkl,m}\,^m=-R_{ijlm,k}\,^m-R_{ijmk,l}\,^m$$
 $$=-R_{ijlm}\,^{,m}\,_k-R_{ijmk}\,^{,m}\,_l+Q(\mathrm{Rm})$$
 By Bianchi identity and the vanishing of the Ricci curvature,
 $$R_{ijlm}\,^{,m}=R_{lmij}\,^{,m}=-R_{lmj}\,^{m}\,_{,i}-R_{lm}\,^{m}\,_{i,j}=0.$$
 Similarly $$R_{ijmk}\,^{,m}=0.$$
 So we get the conclusion.
 \end{proof}

 \begin{theorem}
 In the ALH-non-splitting case, there exists a constant $\mu$ such that the Riemannian curvature at $p$ is bounded by $Ce^{-\mu r(p)}$.
 \label{exponential-decay-of-curvature}
 \end{theorem}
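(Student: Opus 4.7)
The plan is to bootstrap the polynomial decay to arbitrary polynomial order, and then upgrade to exponential decay via a weighted energy (Agmon-type) argument on the cylindrical end, exploiting the spectral gap of the Laplacian on the torus fiber.

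Stage 1 (Polynomial bootstrap). The preceding proposition gives $\Delta R_{ijkl} = Q(\mathrm{Rm})$ with $Q$ quadratic in $\mathrm{Rm}$. Starting from $|\mathrm{Rm}| = O(r^{-2-\epsilon})$, assume inductively that $|\mathrm{Rm}| = O(r^{-\alpha})$ for some $\alpha > 2$. Then $|Q(\mathrm{Rm})| = O(r^{-2\alpha})$, hence it lies in $L^2_{-\delta}$ for a range of $\delta$ close to $2\alpha-k = 2\alpha-3$. Cutting off $R$ outside a large ball and applying Theorem~\ref{solve-laplacian-equation}, solve $\Delta\psi = \chi Q(\mathrm{Rm})$ in $H^2_{-\delta-4}$. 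The difference $(1-\chi)R - \psi$ is harmonic outside a compact set and lies in a weighted Hilbert space; by Lemma~\ref{harmonic-form} it must vanish at infinity, so $\mathrm{Rm}$ inherits the decay of $\psi$, namely $O(r^{-(2\alpha-2)})$. Since $\alpha>2$ implies $2\alpha-2>\alpha$, iterating yields $|\mathrm{Rm}| = O(r^{-N})$ for every $N$.

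Stage 2 (Exponential decay). With super-polynomial decay, the metric $g$ on the end is super-polynomially close to the flat product $h = dr^2 + h_0$ on $E = [R,\infty)\times\mathbb{T}^3$. Decompose any tensor fiberwise: $\mathrm{Rm} = R_0(r) + R_\perp(r,x)$ where $R_0$ is the $\mathbb{T}^3$-average and $R_\perp$ has zero fiber mean. On the model, the transverse part of $\Delta_{\mathbb{T}^3}$ has spectral gap $\lambda_1 > 0$. Multiplying $\Delta R = Q(\mathrm{Rm})$ by $e^{2\mu r}R_\perp$ and integrating by parts over the cylinder for $0 < \mu < \sqrt{\lambda_1}$, using the Poincar\'e-type inequality
\[
\int_{\mathbb{T}^3} |R_\perp|^2 \;\leq\; \lambda_1^{-1}\int_{\mathbb{T}^3} |\nabla_{\mathbb{T}^3} R_\perp|^2
\]
and the super-polynomial smallness of $Q(\mathrm{Rm})$ from Stage~1, gives $\int |R_\perp|^2 e^{2\mu r}\,dV < \infty$. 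Interior elliptic gradient estimates (compare Lemma~\ref{local-geometry}) then upgrade this weighted $L^2$ bound to the pointwise bound $|R_\perp(p)| \leq C e^{-\mu r(p)}$.

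For the mode-zero part $R_0(r)$, the $\mathbb{T}^3$-averaged metric is essentially a one-parameter family of flat metrics $h(r)$ on $\mathbb{T}^3$; the Ricci-flat condition combined with hyperk\"ahler holonomy forces this family to be exponentially close to a single flat metric. Equivalently, $R_0$ satisfies an ODE $\partial_r^2 R_0 = Q(\mathrm{Rm})_0 + (\text{error})$ on $[R,\infty)$, and combining super-polynomial smallness of the right-hand side with the rigidity constraints of Ricci-flatness yields exponential decay of $R_0$. Adding the two contributions proves $|\mathrm{Rm}|(p) \leq C e^{-\mu r(p)}$. The main obstacle is precisely this mode-zero analysis: the cylinder Laplacian has no gap on the half-line, so the Agmon argument does not apply directly to $R_0$, and one must exploit the rigidity of Ricci-flat metrics asymptotic to a flat product on $[R,\infty)\times\mathbb{T}^3$ to rule out slowly decaying mode-zero contributions.
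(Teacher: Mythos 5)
Your approach is genuinely different from the paper's, and Stage~2 has a real gap that you yourself flag: the mode-zero channel.

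In Stage~1 you bootstrap the polynomial decay rate by solving $\Delta\psi = (1-\chi)Q(\mathrm{Rm})$ via Theorem~\ref{solve-laplacian-equation} and comparing with the curvature. This is plausible, though as written it has loose ends: the pointwise estimate for $\psi$ does not follow directly from Lemma~\ref{harmonic-form} (that lemma applies to \emph{harmonic} tensors, while $\psi$ is not harmonic, so you need a local elliptic/Moser argument to convert the $H^2_{-\delta-4}$ bound into a pointwise $O(r^{-(2\alpha-2)})$ bound), and the cutoff needs to be arranged so that $(1-\chi)\mathrm{Rm}-\psi$ is \emph{globally} harmonic on $M$, not just outside a compact set, before Lemma~\ref{harmonic-form} can kill it. These are repairable, but the writeup as given skips them.

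The serious problem is Stage~2. The Agmon-type exponential-weight energy estimate gives exponential decay of the transverse modes $R_\perp$ because $\Delta_{\mathbb{T}^3}$ has a spectral gap, but, as you correctly note, the mode-zero part $R_0(r)$ sees no gap: the model operator is $\partial_r^2$ on a half-line, and nothing in the Agmon argument forbids $R_0$ from decaying only polynomially (indeed, your Stage~1 bootstrap already gives super-polynomial decay of $R_0$, so the question is precisely whether super-polynomial can be upgraded to exponential in the zero mode). You write that ``one must exploit the rigidity of Ricci-flat metrics asymptotic to a flat product... to rule out slowly decaying mode-zero contributions,'' but you do not supply that argument; invoking rigidity is exactly the step that needs a proof, and it is where Łojasiewicz-type or structural input would be needed. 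So as it stands the proposal does not prove the theorem.

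The paper avoids all of this with a much shorter argument: it applies the \emph{polynomial}-weighted estimate of Theorem~\ref{estimate-of-g} with the small weight $\delta=\epsilon$, but crucially in the \emph{translated} variable $r-R$. Because the cylindrical end is translation-invariant, the constant in the estimate is uniform in $R$, and a cutoff localized in $[R+1,R+2]\times\mathbb{T}^3$ together with $\mathbf{D}\mathbf{T}=0$ produces
$$\int_{[R+2,\infty)\times\mathbb{T}^3}|\mathbf{T}|^2 \;\le\; C\int_{[R,R+3]\times\mathbb{T}^3}|\mathbf{T}|^2,$$
whence $J(R+3)\le \frac{C}{1+C}J(R)$ for the tail energy $J(R)=\int_{[R,\infty)}|\mathbf{T}|^2$; iterating gives geometric, i.e.\ exponential, decay in $L^2$, and Theorem~9.20 of Gilbarg--Trudinger upgrades it to $L^\infty$. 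The mode-zero contributions never need separate treatment because the polynomial-weight Hardy inequality (built into Theorem~\ref{estimate-on-Rd} with $d=1$, hence into Theorems~\ref{estimate-of-h} and~\ref{estimate-of-g}) already controls them; the exponential rate then emerges for free from the uniform-in-$R$ constant. You should compare this to your argument: the paper converts a polynomial-weight a priori estimate plus translation invariance into exponential decay, sidestepping the mode-zero rigidity question entirely, whereas your approach needs that rigidity and leaves it unproved.
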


 \begin{proof}
 Pull back the Riemannian curvature tensor of $g$ to $([R,\infty)\times \mathbb{T}^3,h)$, where $h$ is the standard flat metric , we get a tensor $\mathbf{T}$ satisfying the equation $\mathbf{DT}=0$, where $\mathbf{D}=\mathbf{A}^{ij}\nabla_i\nabla_j+\mathbf{B}^{i}\nabla_i+\mathbf{C}$ is a tensor-valued second order elliptic operator such that
 $$|\mathbf{A}^{ij}-\delta^{ij}\mathbf{Id}|\le Cr^{-\epsilon},|\mathbf{B}^{i}|\le Cr^{-1-\epsilon},|\mathbf{C}|\le Cr^{-2-\epsilon}.$$
 By Theorem \ref{local-geometry},
 $$|\mathbf{T}|=O(r^{-2-\epsilon}), |\mathbf{\nabla T}|=O(r^{-3-\epsilon}), |\mathbf{\nabla^2 T}|=O(r^{-4-\epsilon}),$$
 so $\mathbf{T}\in H^2_{\delta}$ for all $\delta<3+2\epsilon$.
 By Theorem \ref{estimate-of-g} and the interior $L^2$ estimate (c.f. Theorem 9.11 of \cite{GilbargTrudinger}), for any large enough $R$,
 $$\int_{[R+2,\infty)\times \mathbb{T}^3}|\mathbf{T}|^2\le \int_{[R,\infty)\times \mathbb{T}^3}(r-R)^{\epsilon}(1-\chi(r-R))^2|\mathbf{T}|^2$$
 $$\le C \int_{[R,\infty)\times \mathbb{T}^3}(r-R)^{\epsilon+4}|\mathbf{D}((1-\chi(r-R)) \mathbf{T})|^2$$
 $$\le C||\mathbf{T}||_{H^2([R+1,R+2]\times \mathbb{T}^3)}^2\le C\int_{[R,R+3]\times \mathbb{T}^3}|\mathbf{T}|^2.$$
 So $$\int_{[R,\infty)\times \mathbb{T}^3}|\mathbf{T}|^2 \ge (1+1/C)\int_{[R+3,\infty)\times \mathbb{T}^3}|\mathbf{T}|^2.$$
 In other words, the Riemannian curvature decays exponentially in $L^2$ sense. The improvement to $L^\infty$ bound is simply Gilbarg and Trudinger's Theorem 9.20 in \cite{GilbargTrudinger}.
 \end{proof}
 From this better control of curvature, the holonomy of the loops $\gamma_{r,i}$ in Theorem \ref{1d-tangent-cone} can be improved to $|\mathrm{hol-Id}|<Ce^{-\mu r}$. Therefore, we are able to prove the following theorem:

 \begin{theorem}
 For any ALH-non-splitting gravitational instanton $(M,g)$, there exist a positive number $\mu$, a compact subset $K\subset M$, and a diffeomorphism $\Phi:[R,\infty)\times\mathbb{T}^3\rightarrow M-K$ such that
 $$|\nabla^m(\Phi^*g-h)|_h\le C(m)e^{-\mu r}$$
 for any $m=0,1,2,...$, where $h=\mathrm{d}r^2\oplus h_1$ for some flat metric $h_1$ on $\mathbb{T}^3$.
 \label{exponential-asymptotics}
 \end{theorem}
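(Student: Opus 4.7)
The plan is to promote the polynomial control in Theorem \ref{1d-tangent-cone} and Theorem \ref{standard-fiberation} to exponential control, using Theorem \ref{exponential-decay-of-curvature} as the driving input. The argument proceeds in three steps: sharpening the control of the fibre loops, constructing a $C^0$-exponentially close diffeomorphism, and propagating the estimate to all derivatives.

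First I would revisit Proposition \ref{fundamental-equation}. Since Theorem \ref{exponential-decay-of-curvature} gives $\max_{\gamma_t}|\mathrm{Rm}|\le Ce^{-\mu t}$ and the lengths $L_j(t)$ are bounded, we obtain $||\mathrm{hol}-\mathrm{Id}|'|\le Ce^{-\mu t}$. Combined with $|\mathrm{hol}-\mathrm{Id}|\to 0$ from Theorem \ref{1d-tangent-cone}, integrating from $r$ to infinity yields $|\mathrm{hol}(\gamma_{r,j})-\mathrm{Id}|\le Ce^{-\mu r}$, and then $|L_j'(r)|\le |\mathrm{hol}-\mathrm{Id}|$ forces $L_j(r)=L_{\infty,j}+O(e^{-\mu r})$. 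The same ODE argument applied to sliding $\gamma_{p,j}$ transverse to the ray $\alpha$ (as used in the proof of Theorem \ref{local-fiberation}) shows that the vectors $v_j(p)$ generating the lattice in $T_pM$ converge to limit vectors $v_j$ with error $O(e^{-\mu r(p)})$.

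Second, I would carry out the averaging construction of Theorem \ref{local-fiberation} and its gluing in Theorem \ref{standard-fiberation}, feeding in the improved estimates. In the ALH-non-splitting case the tangent cone base is $\mathbb{R}_+$, so a single chart suffices: the exponential map composed with the averaging built from the three commuting loops produces a diffeomorphism $\Phi:[R,\infty)\times\mathbb{T}^3\to M-K$, and the target flat metric is $h=\mathrm{d}r^2\oplus h_1$ with $h_1$ the flat metric on $\mathbb{T}^3=(\oplus\mathbb{R}v_j)/\oplus\mathbb{Z}v_j$. Step one then gives $\Phi^*g=h+O(e^{-\mu r})$ pointwise.

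The main obstacle is upgrading this $C^0$ bound to a $C^m$ bound for every $m$, and this is where I would mirror the self-improvement in Theorem \ref{exponential-decay-of-curvature}. Set $T:=\Phi^*g-h$ and pass to harmonic coordinates on unit-size balls in the pull-back cover $[R,\infty)\times\mathbb{R}^3$; in these coordinates the Ricci-flat condition becomes a quasi-linear elliptic system of the schematic form $\Delta_h T=Q(T,\nabla T)+E$ with $|E|\le Ce^{-\mu r}$. Applying the weighted $L^2$ estimate from Theorem \ref{estimate-of-h} to the cutoff $(1-\chi(r-R))T$ yields an inequality
\[
\int_{\{r\ge R+3\}}|T|^2 \;\le\; (1+1/C)^{-1}\int_{\{r\ge R\}}|T|^2 \;+\; C' e^{-2\mu R},
\]
which iterates to $\int_{\{r\ge R\}}|T|^2\le C e^{-2\mu R}$, possibly with a slightly smaller $\mu$. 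Interior elliptic regularity (Theorem 9.11 and Theorem 9.20 of \cite{GilbargTrudinger}) applied on unit balls in the cover, together with a bootstrap in $m$ using the Ricci-flat PDE and $L^2$ exponential decay of all $\nabla^k \mathrm{Rm}$ (itself obtained by the same argument as Theorem \ref{exponential-decay-of-curvature} applied to the tensor $\nabla^k\mathrm{Rm}$, which is Ricci-flat-Shi style), then gives $|\nabla^m T|_h\le C(m)e^{-\mu r}$. The delicate point in this last step is to ensure that the principal part of the Ricci-flat equation restricted to $T$ is non-degenerate, which is precisely what the adapted harmonic coordinates provide.
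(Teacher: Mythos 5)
Your first two steps match the route the paper intends: feed the exponential curvature decay of Theorem~\ref{exponential-decay-of-curvature} into the ODE of Proposition~\ref{fundamental-equation} to upgrade the holonomy and length estimates for the three loops $\gamma_{r,j}$ to $O(e^{-\mu r})$, propagate to the transverse sliding to get $v_j(p)=v_j+O(e^{-\mu r})$, and then rerun the fibration construction. That is precisely what the sentence preceding Theorem~\ref{exponential-asymptotics} signals.

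Your third step, however, contains a genuine gap as written. You want to run the same weighted-$L^2$ iteration as in the proof of Theorem~\ref{exponential-decay-of-curvature}, but that proof relies crucially on the fact that the relevant tensor $\mathbf{T}=\mathrm{Rm}$ satisfies an exact equation $\mathbf{D}\mathbf{T}=0$, so the cutoff error is the only source term. For $T=\Phi^*g-h$ the Ricci-flat equation reads $\Delta_h T = Q(\nabla T)$ with $Q$ quadratic in the Christoffel symbols, and a priori you only know $|\nabla T|=O(r^{-1-\epsilon})$ from Theorem~\ref{standard-fiberation}. Thus the right-hand side of your iteration inequality is not $C'e^{-2\mu R}$ but rather the larger quantity $\int_{r\ge R}|Q|^2 r^{\delta+4}$, which decays only polynomially, and the iteration does not close. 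Also, the error term $E$ in your equation $\Delta_h T=Q+E$ is not identified: in the ALH non-splitting case $h$ lifts to the honest Euclidean metric on the cover $[R,\infty)\times\mathbb{R}^3$, so there is no extraneous source beyond $Q$.

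Two ways to repair this. The paper's own machinery already handles the derivatives without any bootstrap on $T$: apply Lemma~\ref{local-geometry} at \emph{unit} scale around $\alpha(r)$ (not rescaled to scale $r$, which is what one does in the ALE/ALF/ALG cases). There $\Lambda^2 = Ce^{-\mu r}$, so $g_{ij}-\delta_{ij}=O'(e^{-\mu r})$ on the local cover $B_1(0)$, all derivatives included; the local covering transforms differ from the affine isometries built from $v_j$ and $\mathrm{hol}(\gamma_{r,j})$ by $O'(e^{-\mu r})$; and the averaging of Theorems~\ref{local-fiberation}--\ref{standard-fiberation}, run with the improved loop and lattice estimates from your step one, outputs $\Phi^*g - h = O'(e^{-\mu r})$ in one stroke. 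Alternatively, if you insist on bootstrapping $T$ directly, the correct mechanism is an interpolation inequality rather than the weighted $L^2$ iteration: on unit balls in the cover, $\|\nabla T\|_{L^\infty}^2 \le C\|T\|_{L^\infty}\|\nabla^2 T\|_{L^\infty}$, while $|\nabla^2 T|\le C(|\mathrm{Rm}| + |\nabla T|^2)$ from the definition of the curvature in harmonic coordinates; combining and using $|T|,|\mathrm{Rm}|\le Ce^{-\mu r}$ and absorbing the $Ce^{-\mu r}|\nabla T|^2$ term for $r$ large gives $|\nabla T|\le Ce^{-\mu r}$, and higher derivatives follow inductively from the exponential decay of $\nabla^k\mathrm{Rm}$ plus the same interpolation. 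Either route fills the gap; as written, your step three does not.
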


\subsection{Elliptic estimates with exponential growth weights}
 In this subsection, we are trying to prove the elliptic estimates for weighted Hilbert spaces with exponential growth weights.

 We first look at the Laplacian operator on $\mathbb{T}^3=\mathbb{R}^3/\Lambda$. Define the dual lattice $\Lambda^*$ by
 $$\Lambda^*=\{\lambda\in \mathbb{R}^3|<\lambda,v>\in\mathbb{Z},\forall v\in \Lambda\}.$$
 Then $\Delta$ has eigenvalues $-4\pi^2|\lambda|^2$ with eigenvectors $e^{2\pi i<\lambda,\theta>}$ for all $\lambda\in\Lambda^*$.
 We call $\delta$ critical if $\delta=4\pi|\lambda|$ for some $\lambda\in\Lambda^*$. So Theorem \ref{estimate-on-Rd} is replaced by the following theorem on $[R,\infty)\times \mathbb{T}^3$.

 \begin{theorem}
 Suppose $f$ is a real smooth function on $[0,\infty)\times \mathbb{T}^3$ supported in $[R,R']\times \mathbb{T}^3$, $\delta$ is not critical. Then
 $$\int_{[0,\infty)\times \mathbb{T}^3}|f|^2e^{\delta r}\mathrm{dVol}<C\int_{[0,\infty)\times \mathbb{T}^3}|\Delta f|^2e^{\delta r}\mathrm{dVol}.$$
 \label{estimate-on-r-times-T3}
 \end{theorem}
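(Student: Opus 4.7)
The plan is to mimic the proof of Theorem~\ref{estimate-on-Rd}, replacing expansion in spherical harmonics with Fourier series on $\mathbb{T}^3$ and replacing Hardy's inequality with a one-dimensional integration-by-parts inequality adapted to exponential weights.

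First, I would Fourier-decompose $f$ on $\mathbb{T}^3$:
\[
f(r,\theta) = \sum_{\lambda\in\Lambda^*} f_\lambda(r)\, e^{2\pi i\langle\lambda,\theta\rangle},\qquad f_\lambda(r) = \int_{\mathbb{T}^3} f(r,\theta)\, e^{-2\pi i\langle\lambda,\theta\rangle}\,d\theta.
\]
Each $f_\lambda$ is smooth and compactly supported in $[R,R']$, and $\Delta f = \sum_\lambda (f_\lambda'' - 4\pi^2|\lambda|^2 f_\lambda)\, e^{2\pi i\langle\lambda,\theta\rangle}$. Since the weight $e^{\delta r}$ depends only on $r$, Fubini and Parseval reduce the desired inequality to a per-mode bound
\[
\int_0^\infty |f_\lambda|^2 e^{\delta r}\,dr \le C_\lambda \int_0^\infty \bigl|f_\lambda'' - 4\pi^2|\lambda|^2 f_\lambda\bigr|^2 e^{\delta r}\,dr
\]
with constants $C_\lambda$ uniformly bounded in $\lambda$.

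Next, to handle the 1D problem with exponential weight, set $g(r) := f_\lambda(r)\, e^{\delta r/2}$, so that $g$ is smooth and compactly supported in $[R,R']$, and a direct computation gives the factorization
\[
f_\lambda'' - 4\pi^2|\lambda|^2 f_\lambda = e^{-\delta r/2}\,(D-\mu_+)(D-\mu_-)g, \qquad D = \tfrac{d}{dr},\quad \mu_\pm = \tfrac{\delta}{2}\pm 2\pi|\lambda|.
\]
The non-criticality assumption forces $\mu_+\mu_- = \delta^2/4 - 4\pi^2|\lambda|^2 \ne 0$. For any real $a\ne 0$ and any compactly supported smooth $h$, the identity $\int h\, h'\,dr = 0$ yields
\[
\|(D-a)h\|_{L^2}^2 = \|Dh\|_{L^2}^2 + a^2\|h\|_{L^2}^2 \ge a^2\|h\|_{L^2}^2.
\]
Since $(D-\mu_-)g$ remains compactly supported, I apply this inequality twice (first with $h=g$ and $a=\mu_-$, then with $h=(D-\mu_-)g$ and $a=\mu_+$) to obtain
\[
\int_0^\infty |f_\lambda|^2 e^{\delta r}\,dr = \|g\|_{L^2}^2 \le \frac{1}{(\delta^2/4 - 4\pi^2|\lambda|^2)^2}\int_0^\infty \bigl|f_\lambda''-4\pi^2|\lambda|^2 f_\lambda\bigr|^2 e^{\delta r}\,dr.
\]

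Finally, I would check uniformity in $\lambda$: the constant $C_\lambda = (\delta^2/4 - 4\pi^2|\lambda|^2)^{-2}$ is $O(|\lambda|^{-4})$ as $|\lambda|\to\infty$, so it is automatically bounded for all but finitely many $\lambda\in\Lambda^*$; for the finitely many small $|\lambda|$, non-criticality of $\delta$ keeps each $C_\lambda$ finite. Taking $C := \sup_\lambda C_\lambda < \infty$ and summing over $\lambda\in\Lambda^*$ yields the claimed estimate. The main obstacle is precisely this uniformity step: the non-criticality hypothesis is what prevents the per-mode constants from blowing up, playing the same role that non-integrality of $\delta$ plays in Theorem~\ref{estimate-on-Rd}.
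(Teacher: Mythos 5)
Your proof is correct and follows essentially the same strategy as the paper: Fourier decomposition on $\mathbb{T}^3$, factorization of the 1D radial operator as $(\tfrac{d}{dr}-2\pi|\lambda|)(\tfrac{d}{dr}+2\pi|\lambda|)$, and two applications of a weighted Hardy-type inequality in $r$. Your substitution $g=f_\lambda e^{\delta r/2}$ is just a cosmetic unitary change that turns the weighted $L^2$ bound into the algebraic identity $\|(D-a)h\|^2=\|Dh\|^2+a^2\|h\|^2$ — the resulting per-mode constant $16/(\delta^2-16\pi^2|\lambda|^2)^2$ is identical to the paper's, and your explicit check that $C_\lambda\to 0$ as $|\lambda|\to\infty$ spells out the uniformity step the paper leaves implicit.
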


 \begin{proof}
 We write $f$ in terms its Fourier series
 $$f\sim\sum_{\lambda\in\Lambda^*} f_{\lambda}(r)e^{2\pi i<\lambda,\theta>}.$$
 Then $$ \begin{array}{lcl}
  \Delta f & \sim & \sum_{\lambda\in\Lambda^*}(f''_\lambda(r)-4\pi^2|\lambda|^2f_\lambda(r))e^{2\pi i<\lambda,\theta>} \\
  & = & \sum_{\lambda\in\Lambda^*}(\frac{\mathrm{d}}{\mathrm{d} r}-2\pi|\lambda|)(\frac{\mathrm{d}}{\mathrm{d} r}+2\pi|\lambda|)f_\lambda(r)e^{2\pi i<\lambda,\theta>}.\end{array} $$
 This time the Hardy's inequality is
 $$\int_0^\infty g^2e^{\nu r}\mathrm{d}r\le\frac{4}{\nu^2}\int_0^\infty (g')^2e^{\nu r}\mathrm{d}r.$$
 Therefore $$\begin{array}{lcl} \int_0^\infty f_\lambda^2e^{\delta r}\mathrm{d}r & = & \int_0^\infty  (e^{2\pi|\lambda|r}f_\lambda)^2e^{(\delta-4\pi|\lambda|)r}\mathrm{d}r\\
 & \le & \frac{4}{(\delta-4\pi|\lambda|)^2}\int_0^\infty[(e^{2\pi|\lambda|r}f_\lambda)']^2
 e^{(\delta-4\pi|\lambda|)r}\mathrm{d}r\\ & = &
 \frac{4}{(\delta-4\pi|\lambda|)^2}\int_0^\infty[(\frac{\mathrm{d}}{\mathrm{d} r}+2\pi|\lambda|)f_\lambda]^2e^{\delta r}\mathrm{d}r\\
 &\le& \frac{16\int_0^\infty[(\frac{\mathrm{d}}{\mathrm{d} r}-2\pi|\lambda|)(\frac{\mathrm{d}}{\mathrm{d} r}+2\pi|\lambda|)f_\lambda(r)]^2e^{\delta r}\mathrm{d}r}{(\delta+4\pi|\lambda|)^2(\delta-4\pi|\lambda|)^2}.\end{array}$$
 So as long as $\delta$ is not critical, we are done.
 \end{proof}

 Now we define $\underline{L}^2_{\delta}$ by $$||\phi||_{\underline{L}^2_{\delta}}=\int_M|\phi|^2e^{\delta r}\mathrm{dVol},$$
 and $\underline{H}^2_{\delta}$ by $$||\phi||_{\underline{H}^2_{\delta}}=\sqrt{\int_M|\phi|^2e^{\delta r}\mathrm{dVol}+\int_M|\nabla\phi|^2e^{\delta r}\mathrm{dVol}
 +\int_M|\nabla^2\phi|^2e^{\delta r}\mathrm{dVol}}.$$
 Then Theorem \ref{solve-laplacian-equation} is replaced by

 \begin{theorem}
 Suppose $\delta$ is not critical and $\delta<0$. For any $\phi\in \underline{L}^2_{\delta}$, there exists a tensor $\psi\in \underline{H}^2_{\delta}$ such that $\Delta\psi=\phi$.
 \label{solve-laplacian-equation-for-ALH}
 \end{theorem}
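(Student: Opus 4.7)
The plan is to mirror the proof of Theorem \ref{solve-laplacian-equation}, replacing Theorem \ref{estimate-on-Rd} by Theorem \ref{estimate-on-r-times-T3} and polynomial weights by the exponential weights $e^{\delta r}$. The main preliminary I will need is an exponential analogue of Theorem \ref{estimate-of-g}, namely the a priori estimate
\[
\|\psi\|_{\underline{H}^2_{\delta}(M)}\le C\,\|\Delta\psi\|_{\underline{L}^2_{\delta}(M)}
\]
valid for all $\psi\in\underline{H}^2_{\delta}(M)$, whenever $\delta$ is non-critical.

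I would establish this in three stages. First, extend the scalar estimate of Theorem \ref{estimate-on-r-times-T3} to arbitrary tensors on the standard cylindrical model $([R,\infty)\times\mathbb{T}^3,h)$; since $h$ is a flat product metric with globally parallelized tangent bundle, tensors decompose into their scalar components in the parallel frame and the scalar estimate applies componentwise. Second, transfer the estimate from $(E,h)$ to $(M,g)$ via the exponentially precise asymptotics of Theorem \ref{exponential-asymptotics}: writing
\[
\Delta_g=\Delta_h+O(e^{-\mu r})\nabla^2+O(e^{-\mu r})\nabla+O(e^{-\mu r}),
\]
the perturbation is dominated by the left-hand side of the model estimate provided $\psi$ is supported sufficiently far out, after possibly enlarging $K$ so that $|\delta|$ is small compared to $\mu$ on $M\setminus K$. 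Third, combine the exterior estimate with interior $L^2$ elliptic regularity on a large compact set (Theorem 9.11 of \cite{GilbargTrudinger}) and invoke Lemma \ref{functional-analysis} to absorb the compact error term; the hypothesis $\ker\Delta\cap\underline{H}^2_{\delta}=0$ needed there follows, for $\delta<0$, from the Bochner identity $\Delta|\psi|^2=2|\nabla\psi|^2\ge 0$ together with the maximum principle applied to tensors forced to decay at infinity by the weight.

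Once the estimate is in place, I would follow the Riesz duality argument verbatim from the end of the proof of Theorem \ref{solve-laplacian-equation}. The formal adjoint of $\Delta:\underline{H}^2_{\delta}\to\underline{L}^2_{\delta}$ with respect to the weighted inner product is $\Delta^{*}\varphi=e^{-\delta r}\Delta(e^{\delta r}\varphi)$; applying the estimate above (invoked now for the non-critical weight $-\delta>0$) to $e^{\delta r}\varphi$ gives
\[
\|\varphi\|_{\underline{H}^2_{\delta}}\le C\,\|\Delta^{*}\varphi\|_{\underline{L}^2_{\delta}},
\]
so $\Delta^{*}$ has closed range. Riesz representation then produces, for each $\phi\in\underline{L}^2_{\delta}$, a distributional solution $\psi\in\underline{H}^2_{\delta}$ of $\Delta\psi=\phi$, and a standard elliptic regularity bootstrap makes $\psi$ smooth.

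The principal obstacle is the second stage: the model estimate is sharp only away from the critical values $\pm 4\pi|\lambda|$, so one must invoke it simultaneously for both $\delta$ and $-\delta$, and must verify that the perturbation constants coming from the $e^{-\mu r}$ error do not swamp the Hardy-type gap used in Theorem \ref{estimate-on-r-times-T3}. A secondary delicate point is excluding harmonic kernel elements in the weighted spaces needed for both the primal and adjoint estimates; here the condition $\delta<0$ is precisely what forces any harmonic tensor in $\underline{L}^2_{-\delta}$ to decay exponentially, so that the Bochner/maximum principle argument kills it.
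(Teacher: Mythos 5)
Your overall plan is the natural mirror of the proof of Theorem \ref{solve-laplacian-equation}, which is indeed what the paper intends (it does not write out a separate proof for Theorem \ref{solve-laplacian-equation-for-ALH}, only indicates that Theorem \ref{estimate-on-r-times-T3} replaces Theorem \ref{estimate-on-Rd}). The Riesz-duality argument with the formal adjoint $\Delta^{*}\varphi=e^{-\delta r}\Delta(e^{\delta r}\varphi)$, and the three-stage construction of the a priori estimate through the cylindrical model, the exponential asymptotics of Theorem \ref{exponential-asymptotics}, and Lemma \ref{functional-analysis}, are exactly the expected route.

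There is, however, a real slip in your third stage that you should not leave in: you assert that for $\delta<0$ one has $\ker\Delta\cap\underline{H}^2_{\delta}=0$ because "tensors are forced to decay at infinity by the weight." That is the wrong sign. When $\delta<0$ the weight $e^{\delta r}$ is itself decaying, so membership in $\underline{L}^2_{\delta}$ permits polynomial or even mildly exponential growth; in particular, constants are harmonic, lie in $\underline{H}^2_{\delta}$ for every $\delta<0$, and make the a priori estimate $\|\psi\|_{\underline{H}^2_{\delta}}\le C\|\Delta\psi\|_{\underline{L}^2_{\delta}}$ false there. The estimate you actually need, and which your own Riesz step uses, is the one at the adjoint weight $-\delta>0$, where $\underline{L}^2_{-\delta}$ really does force exponential decay and the Bochner/maximum-principle argument closes the kernel. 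Your closing paragraph states this correctly, but it contradicts the claim in stage three that the estimate holds "for all non-critical $\delta$," and your remark that "one must invoke it simultaneously for both $\delta$ and $-\delta$" is an overstatement: only the $-\delta$ estimate (together with local, unweighted elliptic regularity to recover the derivative bounds for $\psi$) is needed, exactly as in the polynomial proof where the a priori estimate is used only at weight $\delta>k-4$. Clean up that sign and the proof is sound.
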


\subsection{Holomorphic functions on ALH instantons}

 To go through all the steps in ALF and ALG cases, we first need to control the growth rate of harmonic functions:

 \begin{lemma}
 Suppose $(N,h)$ is a smooth manifold such that outside a compact set, it is exactly $[R,\infty)\times\mathbb{T}^3$ with flat metric. Then any smooth function $u$ on $N$ harmonic outside a large enough ball with at most exponential growth rate can be written as linear combinations of $1$, $r$, $e^{2\pi|\lambda|r}e^{2\pi i<\lambda,\theta>}$ and an exponential decay function , where $r$ and $\theta$ are the coordinate functions on $[R,\infty)\times\mathbb{T}^3$ pulled back by the diffeomorphism.
 \label{explicit-harmonic-function}
 \end{lemma}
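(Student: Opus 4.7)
The plan is to analyze the cylindrical end $[R,\infty)\times\mathbb{T}^3$ directly by separation of variables. First I would choose $R'\ge R$ large enough that $u$ is harmonic on $[R',\infty)\times\mathbb{T}^3$ and the metric there is exactly the flat product. Expanding
$$u(r,\theta)=\sum_{\lambda\in\Lambda^*} u_\lambda(r)\, e^{2\pi i\langle\lambda,\theta\rangle}$$
in its torus Fourier series, harmonicity together with the eigenvalue computation already carried out in the proof of Theorem \ref{estimate-on-r-times-T3} gives the ODEs $u_\lambda''(r)=4\pi^2|\lambda|^2\, u_\lambda(r)$ on $[R',\infty)$. For $\lambda=0$ the solutions are $u_0(r)=a_0+b_0 r$, and for $\lambda\ne 0$ they are $u_\lambda(r)=a_\lambda e^{2\pi|\lambda|r}+b_\lambda e^{-2\pi|\lambda|r}$. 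These are exactly the explicit building blocks listed in the statement.

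Next I would exploit the exponential growth hypothesis $|u|\le Ce^{\mu r}$ to kill all but finitely many growing modes. Plancherel on $\mathbb{T}^3$ applied slice by slice gives $|u_\lambda(r)|\le Ce^{\mu r}$ for each $\lambda$ and each $r$, and sending $r\to\infty$ forces $a_\lambda=0$ whenever $2\pi|\lambda|>\mu$. Thus only finitely many of the coefficients $a_\lambda$ can be nonzero. Multiplying the resulting finite linear combination of $1$, $r$ and the terms $e^{2\pi|\lambda|r}e^{2\pi i\langle\lambda,\theta\rangle}$ by a smooth cutoff $\chi$ supported on the end produces a smooth function $u_{\mathrm{ex}}$ on $N$, and I would set $v:=u-u_{\mathrm{ex}}$.

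Finally I would verify that $v$ decays exponentially in every $C^m$ norm on $N$. Outside the support of the cutoff, $v$ equals the series $\sum_\lambda b_\lambda e^{-2\pi|\lambda|r}e^{2\pi i\langle\lambda,\theta\rangle}$, summed over all nonzero $\lambda$ together with the purely decaying parts of the finitely many modes in the growing range. Smoothness of $u$ restricted to the slice $\{r=r_0\}$ for some fixed $r_0>R'$ gives rapid decay of its torus Fourier coefficients, so $|b_\lambda|=O_m(|\lambda|^{-m}e^{2\pi|\lambda|r_0})$ for every $m$. Substituting back, for $r>r_0$ each term obeys $|b_\lambda|e^{-2\pi|\lambda|r}\le C_m |\lambda|^{-m}e^{-2\pi|\lambda|(r-r_0)}$, so the series converges absolutely with overall decay at least $e^{-2\pi|\lambda_{\mathrm{min}}|(r-r_0)}$, where $\lambda_{\mathrm{min}}$ is a shortest nonzero element of $\Lambda^*$. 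Differentiating in $r$ or $\theta$ inserts only polynomial factors in $|\lambda|$, which are dominated by the rapid decay. The one technical point requiring care is this uniform smooth convergence of the decaying tail, but it follows routinely from the explicit Fourier picture combined with smoothness of $u$.
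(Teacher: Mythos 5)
Your proof takes essentially the same Fourier-decomposition approach as the paper: expand $u$ in a torus Fourier series, solve the resulting ODEs $u_\lambda''=4\pi^2|\lambda|^2u_\lambda$, use the exponential growth bound to kill all but finitely many growing modes, and show the residual decays exponentially. The only minor technical difference is at the last step: the paper uses Parseval's identity to get $L^2$ exponential decay of the decaying sum $U$ on annuli $[R,R+1]\times\mathbb{T}^3$ and then upgrades to $L^\infty$ via the interior estimate of Theorem 9.20 in Gilbarg--Trudinger, whereas you deduce rapid ($O_m(|\lambda|^{-m})$) decay of the slice Fourier coefficients from smoothness of $u$ at a fixed $r_0$ and sum the series directly; both are correct.
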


 \begin{proof}
 Write $u$ as its Fourier series $\sum_{\lambda\in\Lambda^*} u_{\lambda}(r)e^{2\pi i<\lambda,\theta>}.$ Then $u_{\lambda}''=4\pi^2|\lambda|^2u_{\lambda}$. So
 $$u\sim a_0+b_0 r+\sum_{\lambda\in\Lambda^*-\{0\}}a_\lambda e^{2\pi|\lambda|r}e^{2\pi i<\lambda,\theta>}+
 \sum_{\lambda\in\Lambda^*-\{0\}}b_\lambda e^{-2\pi|\lambda|r}e^{2\pi i<\lambda,\theta>}.$$
 By Parserval's identity, the growth condition of $u$ implies that the first sum has finite terms. For the second sum $U$, Parseval's identity again implies that $\int_{[R,R+1]\times\mathbb{T}^3}|U|^2$ decay exponentially. By Theorem 9.20 of \cite{GilbargTrudinger}, $U$ also decay exponentially in $L^{\infty}$ sense.
 \end{proof}

 Now  we can still find the global holomorphic function on ALH-non-splitting instanton $(M,g)$

 \begin{theorem}
 In the ALH-non-splitting case, there exist $(a_1,a_2,a_3)\in \mathbb{S}^2$ and a global holomorphic function with respect to $a_1I+a_2J+a_3K$ on $M$ such that any far enough fiber is biholomorphic to a complex torus.
 \label{ALH}
 \end{theorem}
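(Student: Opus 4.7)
The plan is to mirror the argument of Theorem \ref{ALG}, replacing the polynomial weighted spaces of Section 4.2 by the exponential weighted spaces of Section 4.5 and using the input from Theorem \ref{exponential-asymptotics}. By Theorem \ref{exponential-asymptotics}, outside a compact set $K\subset M$ there is a diffeomorphism $\Phi:[R,\infty)\times\mathbb{T}^3\to M\setminus K$ with $\Phi^*g=h+O'(e^{-\mu r})$, where $h=dr^2+h_1$ is flat and $\mathbb{T}^3=\mathbb{R}^3/\Lambda$. Let $\lambda_0\in\Lambda^*$ be a shortest nonzero element of the dual lattice, set $\tilde\theta=\langle\lambda_0,\theta\rangle/|\lambda_0|$, and invoke the transitivity of $\mathrm{Sp}(1)$ on orthogonal pairs of equal length vectors (the special property highlighted in the introduction) to choose the complex structure $a_1I+a_2J+a_3K$ so that $dr+id\tilde\theta$ is of type $(1,0)$ on the flat model. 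Since $\langle\lambda_0,v\rangle\in\mathbb{Z}$ for every $v\in\Lambda$, the function $u_0=\exp(2\pi|\lambda_0|(r+i\tilde\theta))$ is then a single-valued holomorphic function on the flat model, with exponential growth rate $2\pi|\lambda_0|$.

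Pulling $u_0$ back by $\Phi$ and multiplying by a cutoff yields a smooth function $f$ on $M$, extended by zero on $K$; set $\phi=\bar\partial_g f$. The bound $\Phi^*g-h=O'(e^{-\mu r})$ gives $|\phi|\le Ce^{(2\pi|\lambda_0|-\mu)r}$ on the end together with a compactly supported contribution coming from the cutoff. Choose a non-critical weight $\delta$ with $-4\pi|\lambda_0|<\delta<-4\pi|\lambda_0|+2\mu$; in particular $\delta<0$, so Theorem \ref{solve-laplacian-equation-for-ALH} applies and produces $\psi\in\underline{H}^2_\delta$ with $\Delta\psi=\phi$, smooth by elliptic regularity.

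Applying $\bar\partial$ to $\Delta\psi=\phi$, using $\bar\partial\phi=0$ and $\bar\partial(\bar\partial\psi)=0$ (top antiholomorphic degree on a $4$-manifold), gives $\Delta(\bar\partial\psi)=0$. Writing $\bar\partial\psi=\xi\omega^+$, where $\omega^+$ is the parallel $(0,2)$-form as in the proof of Theorem \ref{ALG}, the function $\xi$ is harmonic on $M$; an exponential-weighted analog of Lemma \ref{harmonic-form} bounds it pointwise at a rate strictly smaller than $e^{2\pi|\lambda_0|r}$. Transferring Lemma \ref{explicit-harmonic-function} from the exact product model to $M$ via the exponentially small correction supplied by Theorem \ref{exponential-asymptotics}, and using that $\lambda_0$ is a shortest nonzero element of $\Lambda^*$, any such $\xi$ decomposes as $a+br+(\text{exponentially decaying})$. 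Since $\omega^+$ is a parallel non-degenerate $(0,2)$-form, one has $\bar\partial^*(\xi\omega^+)=-\iota_{\nabla\xi}\omega^+$, so $\bar\partial^*\bar\partial\psi=0$ is equivalent to $\xi$ being constant. The constant term $a$ and the exponentially decaying terms give no obstruction; the delicate point is to rule out the linear term $br$. I would do this by a weighted integration by parts, pairing $\Delta(\bar\partial\psi)=0$ against $e^{\delta'r}\bar\partial\psi$ for $\delta'$ slightly larger than $\delta$ and applying the Weitzenb\"ock formula on $(0,2)$-forms to get $\int_M e^{\delta'r}|\bar\partial^*\bar\partial\psi|^2$ equal to a commutator term involving $\nabla e^{\delta'r}$. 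The $\underline{H}^2_\delta$ control of $\psi$ together with the explicit mode expansion of $\xi$ then forces the commutator term to vanish, so $\bar\partial^*\bar\partial\psi=0$ and $b=0$.

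Once $\bar\partial^*\bar\partial\psi=0$ is established, $\bar\partial(f+\bar\partial^*\psi)=\phi+\bar\partial\bar\partial^*\psi=-\bar\partial^*\bar\partial\psi=0$, so $F:=f+\bar\partial^*\psi$ is globally holomorphic on $M$. Weighted pointwise estimates on $\bar\partial^*\psi$ show $|\bar\partial^*\psi|=o(|f|)$ on the end, so $F$ has the same leading asymptotic behavior as $u_0$; in particular, for $r$ large, the level sets $\{F=\text{const}\}$ are smooth compact perturbations of the $\mathbb{T}^2$-fibers of $u_0$ on the flat model, hence compact Riemann surfaces of genus one, hence complex tori by the uniformization theorem. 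The main obstacle throughout is the step eliminating the linear mode $br$: unlike the ALF and ALG cases, where the discrete admissible growth rates of harmonic functions (Theorem \ref{growth-rate}) immediately force $\xi$ to be constant, in the ALH case the function $r$ itself is harmonic and sits strictly between the constants and the smallest exponential mode, so its exclusion requires the careful weighted integration-by-parts argument above rather than an off-the-shelf growth classification.
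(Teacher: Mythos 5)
Your setup and the first half of the argument match the paper's proof closely: the choice of a shortest dual-lattice vector $\lambda_0$, the adapted complex structure making $e^{2\pi|\lambda_0|(r+i\tilde\theta)}$ holomorphic on the model, the cutoff, the weight range $(-4\pi|\lambda_0|,-4\pi|\lambda_0|+2\mu)$ (the paper's $-2\delta$), the application of Theorem~\ref{solve-laplacian-equation-for-ALH}, and the identification $\bar\partial\psi=\xi\omega^+$ with $\xi$ harmonic. Up to this point the proposal is sound and identical in spirit to the paper.

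The divergence --- and the gap --- is in how you propose to kill $\bar\partial^*\bar\partial\psi$. You correctly diagnose that the linear harmonic function $r$ is the troublesome mode, but your proposed remedy (pair $\Delta(\bar\partial\psi)=0$ against $e^{\delta'r}\bar\partial\psi$ and hope the commutator term cancels) is not made precise and I do not see how to close it. Moreover, you assert that the constant and exponentially decaying modes ``give no obstruction'' to $\bar\partial^*\bar\partial\psi=0$, but since $\omega^+$ is nondegenerate, $\bar\partial^*(\xi\omega^+)=0$ forces $\nabla\xi\equiv 0$ exactly: any nonzero decaying mode in $\xi$ is just as much an obstruction as $br$, so that part of the outline is not correct as stated. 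The paper instead does \emph{not} try to show that $\xi$ is constant; after an iterative improvement using the auxiliary exactly-cylindrical metric $g'$ (which you gloss over as a ``transfer''), it only establishes $\xi=O(r^{1+\epsilon})$. It then observes that the harmonic $(0,1)$-form $\bar\partial^*\bar\partial\psi=\bar\partial(f+\bar\partial^*\psi)$ is $O(r^\epsilon)$ with covariant derivative $O(r^{-1+\epsilon})$, applies the Weitzenb\"ock formula and a cut-off integration by parts (using the \emph{linear} volume growth to make the boundary term vanish) to conclude $\bar\partial^*\bar\partial\psi$ is covariant constant, and finally invokes the key structural fact that a nonzero $\mathrm{SU}(2)$-holonomy has no fixed nonzero vector: a nonzero parallel $(0,1)$-form would force trivial holonomy and hence a flat splitting, contradicting the non-splitting hypothesis. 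This holonomy rigidity is the essential idea that your proposal is missing, and it is what actually eliminates the linear mode (indirectly) together with all the decaying modes at once. Absent it, or a genuinely worked-out alternative, the proposal does not constitute a proof.

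One smaller issue: Lemma~\ref{explicit-harmonic-function} as stated applies to metrics that are \emph{exactly} the flat cylinder outside a compact set; applying it to $M$ requires the intermediate metric $g'$ the paper constructs, not merely a remark that the correction is exponentially small.
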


 \begin{proof}
 As before, let $[R,\infty)\times\mathbb{T}^3=\{(r,\theta)|r\ge R,\theta=(\theta_1,\theta_2,\theta_3)\in \mathbb{R}^3/\Lambda\}$. Let $\Lambda^*$ be the dual lattice. Choose $\lambda\in \Lambda^*-\{0\}$ with minimal length. Choose $(a_1,a_2,a_3)\in \mathbb{S}^2$ such that
 $$(a_1I^*+a_2J^*+a_3K^*)\mathrm{d}r
 =-\frac{\lambda_1\mathrm{d}\theta_1+\lambda_2\mathrm{d}\theta_2+\lambda_3\mathrm{d}\theta_3}{|\lambda|}.$$

 The function $e^{2\pi|\lambda|r}e^{2\pi i<\lambda,\theta>}$ is holomorphic with respect to $a_1I+a_2J+a_3K$. The growth rate of this function is exactly $O(e^{2\pi|\lambda|r})$.

 Now we pull back this function from $[R,\infty)\times\mathbb{T}^3$ to $M$, cut it off and fill in with 0 inside $K$, we obtain a function $f$ satisfying $$\bar\partial_g f=\phi=O(e^{(2\pi|\lambda|-\mu)r}),$$ where $\mu$ is the constant in Theorem \ref{exponential-decay-of-curvature}.
 So $\phi\in \underline{L}^2_{-2\delta}$ for any non-critical positive number $\delta\in(2\pi|\lambda|-\mu,2\pi|\lambda|)$. By Theorem \ref{solve-laplacian-equation-for-ALH},   there exists $\psi\in \underline{H}^2_{-2\delta}$ satisfying
 $$\phi=\Delta\psi=-(\bar\partial^*\bar\partial+\bar\partial\bar\partial^*)\psi$$
 in the distribution sense. Elliptic regularity implies that $\psi$ is a smooth $(0,1)$-form. As before,
 $\bar\partial\psi=\xi\omega^+$ is a harmonic (0,2)-form. So $\xi$ is a harmonic function of order $O(e^{\delta r})$.

 Now we use a cut-off function and the diffeomorphism to average $g$ and the pull back of $h$. We get a smooth metric $g'$ on $M$ which is identically the pull back of $h$ outside a very big ball. Now let $\nu$ be the inferior of positive $\nu'$ such that $\xi$ is bounded by $O(e^{\nu' r})$. If $\nu>0$, then $\Delta_{g'}\xi\in \underline{L}^2_{-2\delta'}$ for any positive $\nu>\delta'>\nu-\mu$. It follows that there exists a function in $\underline{L}^2_{-2\delta'}$ whose Laplacian $\Delta_{g'}$ is $\Delta_{g'}\xi$. The difference of those two functions is a $g'$-harmonic function. By Lemma \ref{explicit-harmonic-function}, it must have at most linear growth rate since the growth rate is below the first nonlinear harmonic function. It follows that $\xi$ must lie in $O(e^{\delta' r})$, a contradiction. So $\nu=0$. Therefore, $\xi$ is bounded by any exponential growth function.

 So $\Delta_{g'}\xi$ decay exponentially. In particular, it is in $L^2_{1-\epsilon}$. By Theorem \ref{solve-laplacian-equation}, we can find out a function in $H^2_{-3-\epsilon}$ whose $\Delta_{g'}$ is $\Delta_{g'}\xi$. Therefore, we know that $\xi$ is actually in $O(r^{1+\epsilon})$. Of course, $\bar\partial\psi=\xi\omega^+$
 has the same estimate.

 By Lemma \ref{weighted-L2-estimate}, the harmonic (0,1)-form $\bar\partial^* \bar\partial \psi=\bar\partial(f+\bar\partial^*\psi)$ is in $O(r^{\epsilon})$ and its covariant derivative is in $O(r^{-1+\epsilon})$. The Weitzenb\"ock formula implies that $-\nabla^*\nabla(\bar\partial^* \bar\partial \psi)=0$. Therefore $$\int_M|\nabla(\bar\partial^* \bar\partial \psi)|^2\chi\le \int_M|\bar\partial^* \bar\partial \psi||\nabla(\bar\partial^* \bar\partial \psi)||\nabla\chi|$$
 for any smooth compactly supported $\chi$. Let $\chi=\chi(r-R)$, the right hand side converges to 0. Therefore $\bar\partial^* \bar\partial \psi$ is a covariant constant (0,1)-form. If this form is non-zero, it would be invariant under the holonomy of any loop. However, elements in $\mathrm{SU}(2)$ have no fixed point except the identity matrix. So $(M,g)$ must have trivial holonomy. Therefore, it is $\mathbb{R}^{4-k}\times \mathbb{T}^k$ with flat metric. It is a contradiction with our non-splitting assumption. So actually $\bar\partial^* \bar\partial \psi$ is identically 0. $f+\bar\partial^*\psi$ is a global holomorphic function on $M$.
 \end{proof}

 \subsection{Compactification of ALG and ALH-non-splitting instantons}
 In Theorem \ref{ALG} and \ref{ALH}, we proved the existence of global holomorphic function $u$ in ALG and ALH-non-splitting cases such that any far enough fiber is biholomorphic to a complex torus. Notice that $\mathrm{d}u$ is never zero on far enough fiber. Define a holomorphic vector field $X$ by $\omega^+(Y,X)=\mathrm{d}u(Y)$. Then since $X(u)=\mathrm{d}u(X)=\omega^+(X,X)=0$, $X$ is well defined when it is restricted to each far enough fiber. On each fixed far enough fiber, there exists a unique holomorphic form $\phi$ such that $\phi(X)=1$. Locally
 $$\omega^+=f(u,v)\mathrm{d}u\wedge\mathrm{d}v, X=f^{-1}(u,v)\frac{\partial}{\partial v}, \phi=f(u,v)\mathrm{d}v.$$

 Notice that each far enough fiber is topologically a torus. So actually, we can integrate the from $\phi$ to get a holomorphic function $v \in \mathbb{C}/\mathbb{Z}\tau_1(u)\oplus\mathbb{Z}\tau_2(u)$ up to a constant.
 We can fix this constant locally by choosing a holomorphic section of $u$ as the base point.
 Therefore $M$ is biholomorphic to $U\times\mathbb{C}/(u,v)\sim(u,v+m\tau_1(u)+n\tau_2(u))$, where $\tau_1(u)$ and $\tau_2(u)$ are locally defined holomorphic functions. Actually, they are the integral of $\phi$ in the basis of $H_1$ of each fiber. This gives a holomorphic torus fiberation locally.

 Recall that there is a diffeomorphism from $M$ minus a large compact set to the standard fiberation. Denote the inverse image of the zero section by $s$. $s$ is again a section outside large compact set because $\mathrm{d}u$ differ with the standard one by a decaying error. Write $\bar\partial s$ as $e(u)\mathrm{d}\bar{u}\otimes X$, then $e$ is a function defined on the inverse of the punctured disc with polynomial growth rate. So there is an at most polynomial growth function $E$  on the inverse of punctured disc such that $\bar\partial E(u)=e(u)\mathrm{d}\bar{u}$. Now we apply the flow $-E(u)X$ to the section $s$ to get a holomorphic section $s_0$ on the neighborhood of infinity. View $s_0$ as the zero section, we know that $M$ minus a large compact set is biholomorphic to $(\mathbb{C}-B_R)\times \mathbb{C}/(u,v)\sim(u,v+m\tau_1(u)+n\tau_2(u))$ globally, where $\tau_1(u)$ and $\tau_2(u)$ are multi-valued holomorphic functions.

 As proved in Kodaira's paper \cite{Kodaira}, there exists an (unique) elliptic fiberation $B$ over $B_{R^{-1}}$ with a section such that $B$ minus the central fiber $D$ is biholomorphic to $(B_{R^{-1}}-\{0\})\times \mathbb{C}/(\tilde u,v)\sim(\tilde u,v+m\tau_1(\tilde u^{-1})+n\tau_2(\tilde u^{-1}))$. We can naturally identity points and get a compactification $\bar M$ of $M$. So $\bar M$ is a compact complex surface with a meromorphic function $u=\tilde u^{-1}$. Now since the subvariety of critical points $\{\mathrm{d}u=0\}$ is a finite union of irreducible curves (On those irreducible curves, $u$ is of course constant) and points, we know that except finite critical values in $\mathbb{CP}^1$, any fiber of $u$ has no intersection with  $\{\mathrm{d}u=0\}$. Therefore, a generic fiber has genus 1 and must be an elliptic curve. In other words, $\bar M$ is a compact elliptic surface. In conclusion, we have proved the second main theorem.

 \subsection{Twistor space of ALF-$D_k$ instantons}

 On ALF-$D_k$ gravitational instantons, we have found quadratic growth holomorphic functions for each compatible complex structure. A natural question is, is there any relationship between those functions? Before going ahead, let us recall the definition of twistor space of hyperk\"ahler manifolds.

 \begin{definition}
 (c.f. \cite{HithcinKarlhedLindstromRocek}) Let $(M,g,I,J,K)$ be a hyperk\"ahler manifold. Then the twistor space $Z$ of $M$ is the product manifold $M\times \mathbb{S}^2$ equipped with an integrable complex structure
 $$\underline{I}=(\frac{1-\zeta\bar\zeta}{1+\zeta\bar\zeta}I-\frac{\zeta+\bar\zeta}{1+\zeta\bar\zeta}J
 +i\frac{\zeta-\bar\zeta}{1+\zeta\bar\zeta}K,I_0),$$
 where $\zeta\in\mathbb{C}\subset\mathbb{C}\cup\{\infty\}=\mathbb{CP}^1=\mathbb{S}^2$ is the coordinate function, and $I_0$ is the standard complex structure on $\mathbb{CP}^1$.
 \end{definition}

 Notice that our definition is different from \cite{HithcinKarlhedLindstromRocek} to correct a sign error. We will briefly rewrite Page 554-557 of their paper with a correct sign.

 Let $\phi$ be a (1,0)-form of $I$. Then $I^*\phi=i\phi$, where $(I^*\phi)(X)=\phi(IX)$. Set $\theta=\phi+\zeta K^*\phi$,
 then $$(1+\zeta\bar\zeta)\underline{I}^*\theta=
 ((1-\zeta\bar\zeta)I^*-(\zeta+\bar\zeta)J^*+i(\zeta-\bar\zeta)K^*)\theta=i(1+\zeta\bar\zeta)\theta,$$
 because we have relationships like $J^*I^*=K^*$. (In \cite{HithcinKarlhedLindstromRocek}, they thought $I^*J^*=K^*$ and caused a sign error.)

 Now if the form $\omega^+=\omega_2+i\omega_3$ can be written as
 $$\frac{1}{2}\omega^+=\sum_{i=1}^{n} \phi_i\wedge\phi_{n+i}$$
 for some (1,0)-forms of $I$. Then we can define a form on the twistor space by
 $$\omega=2\sum_{i=1}^{n} (\phi_i+\zeta K^*\phi_i)\wedge(\phi_{n+i}+\zeta K^*\phi_{n+i})= (\omega_2+i\omega_3)+2\zeta\omega_1-\zeta^2(\omega_2-i\omega_3).$$
 It is a holomorphic section of the vector bundle $\Lambda^2T_F^*\otimes \mathcal{O}(2)$, where $F$ means the fiber of $Z$ which is diffeomorphic to $M$.
 We also have a real structure $\tau(p,\zeta)=(p,-1/\bar\zeta)$. It takes the complex structure $\underline{I}$ to its conjugate $-\underline{I}$. In \cite{HithcinKarlhedLindstromRocek}, they proved the following theorem:

 \begin{theorem}
 Let $Z^{2n+1}$ be a complex manifold such that

 (i) $Z$ is a holomorphic fiber bundle $\pi:Z\rightarrow \mathbb{CP}^1$ over the projective line;

 (ii) The bundle admits a family of holomorphic sections each with normal bundle isomorphic to $\mathbb{C}^{2n}\otimes \mathcal{O}(1)$;

 (iii) There exists a holomorphic section $\omega$ of $\Lambda^2T_F^*\otimes \mathcal{O}(2)$ defining a symplectic form on each fiber;

 (iv) $Z$ has a real structure compatible with (i),(ii),(iii) and inducing the antipodal map on $\mathbb{CP}^1$.

 Then the parameter space of real sections is a $4n$-dimensional manifold with a natural hyperk\"ahler metric for which $Z$ is the twistor space.
 \label{twistor-space}
 \end{theorem}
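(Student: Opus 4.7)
The plan is to reconstruct the hyperk\"ahler structure on $M$ directly from the twistor data, following the strategy of Penrose-type correspondences. I will first build $M$ as a real manifold, then manufacture the sphere of complex structures from the fiberwise evaluation of sections, and finally extract the metric and symplectic forms from the section $\omega$.

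First, by Kodaira's deformation theory, condition (ii) guarantees that the parameter space of (complex) sections is smooth near any given one, with tangent space $H^0(\mathbb{CP}^1,N)\cong H^0(\mathbb{CP}^1,\mathcal{O}(1)^{2n})\cong\mathbb{C}^{4n}$, and $H^1$ vanishes so the family is unobstructed. Condition (iv) cuts out the real locus $M$: a section $s$ is real when $\tau\circ s\circ\sigma=s$, where $\sigma$ is the antipodal map; linearizing, the real tangent space is the $+1$-eigenspace of the induced involution on $H^0(\mathbb{CP}^1,\mathcal{O}(1)^{2n})$, which is a real $4n$-dimensional subspace. This makes $M$ a smooth real $4n$-manifold.

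Next I build the family of complex structures. For a real section $s$ and any $\zeta\in\mathbb{CP}^1$, the evaluation map $\mathrm{ev}_\zeta\colon T_sM\otimes\mathbb{C}\cong H^0(\mathbb{CP}^1,s^*N)\to N_{s(\zeta)}\cong\mathbb{C}^{2n}$ is an $\mathbb{R}$-linear isomorphism onto a complex vector space; pulling back the multiplication-by-$i$ yields an almost complex structure $\underline I_\zeta$ on $T_sM$. In particular, taking $\zeta=0,1,i$ (or more invariantly, expanding in the obvious basis on $\mathcal{O}(1)$) gives three almost complex structures $I,J,K$; the algebraic identity $IJ=K$ and its cyclic companions follow because the family $\zeta\mapsto\underline I_\zeta$ is the natural $\mathbb{S}^2$ of complex structures on the fiber $\mathbb{C}^{2n}$ of a quaternionic vector bundle. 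Integrability is the key point: I would argue that the almost complex structure $(\underline I_\zeta,I_0)$ on $M\times\mathbb{CP}^1$ agrees with the one on $Z$ at the locus of real sections (up to a diffeomorphism defined by evaluation), so integrability is inherited from $Z$.

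To construct the metric, I feed tangent vectors into the $\mathcal{O}(2)$-valued two-form $\omega$ from condition (iii). For $X,Y\in T_sM$ and fixed $\zeta$, $\omega_\zeta(\mathrm{ev}_\zeta X,\mathrm{ev}_\zeta Y)$ is an element of the fiber of $\mathcal{O}(2)$ at $\zeta$, i.e.\ a homogeneous quadratic in $\zeta$ of the form $\omega_+(X,Y)+2\zeta\,\omega_1(X,Y)-\zeta^2\,\omega_-(X,Y)$ with $\omega_-=\overline{\omega_+}$ under the reality condition. This unambiguously defines real 2-forms $\omega_1,\omega_2,\omega_3$ with $\omega_+=\omega_2+i\omega_3$, and I then set $g(X,Y):=\omega_1(X,JY)$ (equivalently the expressions in terms of $\omega_2,K$ and $\omega_3,I$, which agree tautologically from the $\zeta$-expansion). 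The compatibility $\omega_\alpha(\cdot,\cdot)=g(I_\alpha\cdot,\cdot)$ is a direct algebraic consequence of the transformation rule $\underline I_\zeta$ under $\zeta$.

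Two points remain and are where I expect the real work. First, positive-definiteness of $g$: this is not algebraic, and must use the reality condition seriously. The standard argument is to show that on a real section, the restriction of the $\mathcal{O}(2)$-valued symplectic form to the real tangent space is compatible with a quaternionic structure, and then to invoke the fact that a quaternion-Hermitian form on $\mathbb{H}^n$ determined by a holomorphic symplectic form and a real structure interchanging conjugate points of $\mathbb{CP}^1$ is automatically of definite signature; this should be verified carefully by working in the fiber model $\mathcal{O}(1)^{2n}$, where $H^0=\mathbb{C}^{4n}$ and the real involution together with $\omega$ give an explicit quaternionic-Hermitian form. Second, the hyperk\"ahler (i.e.\ closed-K\"ahler) condition: each $\omega_\alpha$ must be closed. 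The elegant way is to observe that the family $\omega_\zeta$ is a holomorphic section of $\Lambda^2 T_F^*\otimes\mathcal{O}(2)$ and in particular closed along fibers for every $\zeta$; the whole polynomial $\omega_+ +2\zeta\omega_1-\zeta^2\omega_-$ being $d$-closed in $M$ for each $\zeta$ is then extracted by matching coefficients of $1,\zeta,\zeta^2$. I expect the main obstacle to be precisely packaging this closedness, since $d$ refers to the parameter space $M$ while $\omega_\zeta$ a priori only lives fiberwise on $Z$; the resolution is to interpret $\omega$ via its pullback under the universal section map $M\times\mathbb{CP}^1\to Z$ and use holomorphicity in $\zeta$ together with reality to conclude. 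Once these are established, integrability of each $I_\alpha$ follows from the $d$-closedness of $\omega_\alpha$ and the Newlander–Nirenberg theorem applied fiberwise, and finally one checks that the twistor space of $(M,g,I,J,K)$ so produced is biholomorphic to the original $Z$ by matching tautological sections, completing the proof.
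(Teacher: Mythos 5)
The paper does not prove Theorem \ref{twistor-space}: it is quoted verbatim from \cite{HithcinKarlhedLindstromRocek} and then applied with $n=1$, so there is no internal proof against which to compare your attempt. Your sketch is a reasonable reconstruction of the Hitchin--Karlhede--Lindstr\"om--Ro\v{c}ek argument, and it identifies the correct ingredients (Kodaira unobstructedness for sections with normal bundle $\mathcal{O}(1)^{2n}$, the real involution cutting out a real $4n$-dimensional slice $M$, the pointwise evaluation maps producing the $\mathbb{CP}^1$ of almost complex structures, the $\zeta$-expansion of the $\mathcal{O}(2)$-valued form $\omega$ producing $\omega_1,\omega_2,\omega_3$) as well as the two delicate points (positive-definiteness and closedness, which indeed are where the real content lies and which the original reference also does not spell out in full).

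One factual slip to flag: the evaluation map cannot be an $\mathbb{R}$-linear isomorphism from $T_sM\otimes\mathbb{C}\cong H^0(\mathbb{CP}^1,s^*N)$ (real dimension $8n$) onto the fiber $N_{s(\zeta)}\cong\mathbb{C}^{2n}$ (real dimension $4n$). The correct statement, and the one actually used, is that the restriction of evaluation to the \emph{real} $4n$-dimensional tangent space $T_sM\subset H^0(\mathbb{CP}^1,s^*N)$ is an $\mathbb{R}$-linear isomorphism onto $N_{s(\zeta)}$; the almost complex structure $\underline{I}_\zeta$ on $T_sM$ is then the pullback of multiplication by $i$ under that identification. (Sections of $\mathcal{O}(1)^{2n}$ vanishing at a given $\zeta$ form a complex $2n$-dimensional subspace of $H^0$, so evaluation has a kernel on the full complexified tangent space.) With this fix, your outline is faithful to the argument the paper is citing, and for the purposes of the present paper --- which uses the theorem purely as a black box to produce the $\mathcal{O}(4)$ multiplet in the ALF-$D_k$ case --- the citation is the intended ``proof.''
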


 Return to the gravitational instantons for which $n$ in the above theorems equals to 1. Recall that we have found the holomorphic function on $M$ by modifying the pull back of the standard function on the standard model. So let us look at the standard model $(E,h,I,J,K)$ first. It is the quotient of the Taub-NUT metric outside a compact set by $D_{4|e|}$. Recall that the Taub-NUT metric is (c.f. Theorem \ref{standard-fiberation})
 $$\mathrm{d}s^2=V\mathrm{d}\mathbf{x}^2+V^{-1}\eta^2$$
 with $$\mathrm{d}x_1=I^*(V^{-1}\eta)=J^*\mathrm{d}x_2=K^*\mathrm{d}x_3.$$
 So $$(\frac{1-\zeta\bar\zeta}{1+\zeta\bar\zeta}I^*-\frac{\zeta+\bar\zeta}{1+\zeta\bar\zeta}J^*
 +i\frac{\zeta-\bar\zeta}{1+\zeta\bar\zeta}K^*-i)(-2\zeta\mathrm{d}x_1-(1-\zeta^2)\mathrm{d}x_3+i(1+\zeta^2)\mathrm{d}x_2)=0.$$
 Therefore, $(-x_3+ix_2-2x_1\zeta-(-x_3-ix_2)\zeta^2)^2$ is a holomorphic function on the twistor space of $E$.
 So the holomorphic function on $M\times\{\zeta\}\in Z$ is asymptotic to $(-x_3+ix_2-2x_1\zeta-(-x_3-ix_2)\zeta^2)^2$ with error $O'(r^{2-\epsilon})$

 Notice that any harmonic function has even integer growth rate, so the holomorphic function is unique up to the adding of constant. We may fix this ambiguity by requiring the value at the fixed base point $o$ to be 0. We will prove that after the modification the holomorphic functions have some simple relationship.

 Actually, we have a $I$-holomorphic ($\zeta=0$) function $u_1+iv_1$ asymptotic to $(-x_3+ix_2)^2=(x_3^2-x_2^2)-2ix_2x_3$, $J$-holomorphic ($\zeta=-1$) function $u_2+iv_2$ asymptotic to $(2x_1+2ix_2)^2=4(x_1^2-x_2^2)+8ix_1x_2$, and $K$-holomorphic  ($\zeta=-i$) function $u_3+iv_3$ asymptotic to $(-2x_3+2ix_1)^2=4(x_3^2-x_1^2)-8ix_3x_1$. Notice that $u_2+u_3-4u_1$ is a harmonic function asymptotic to 0, i.e. in $O'(r^{2-\epsilon})$, so it must be 0. Similarly the harmonic function
 $$z(p,\zeta)=(u_1+iv_1)-\frac{1}{2}(v_3+iv_2)\zeta+\frac{1}{2}(u_2-u_3)\zeta^2+\frac{1}{2}(v_3-iv_2)\zeta^3+(u_1-iv_1)\zeta^4$$
 is asymptotic to $(-x_3+ix_2-2x_1\zeta-(-x_3-ix_2)\zeta^2)^2$ and therefore must be the holomorphic one. In conclusion, we have proved the following theorem:

 \begin{theorem}
 In the ALF-$D_k$ case, there exist 6 harmonic functions $u_i$,$v_i$ with $4u_1=u_2+u_3$ such that $$z(p,\zeta)=(u_1+iv_1)-\frac{1}{2}(v_3+iv_2)\zeta+\frac{1}{2}(u_2-u_3)\zeta^2+\frac{1}{2}(v_3-iv_2)\zeta^3+(u_1-iv_1)\zeta^4$$ is a $\underline{I}$-holomorphic map from the twistor space of $M$ to the total space of the $\mathcal{O}(4)$ bundle over $\mathbb{CP}^1$.
 \label{holomorphic-map-on-twistor-space}
 \end{theorem}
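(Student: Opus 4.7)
The plan is to realize each coefficient of the polynomial $z(p,\zeta)$ as a globally defined harmonic function on $M$, matched to the explicit expression on the Taub-NUT model, and then invoke the uniqueness of quadratic-growth harmonic functions (Theorem \ref{growth-rate}) to force the six functions into the claimed relation. The starting point is Theorem \ref{ALF-Dk} applied to three distinguished compatible complex structures. For each of $I=1\cdot I+0\cdot J+0\cdot K$ (i.e.\ $\zeta=0$), $J$ (i.e.\ $\zeta=-1$), and $K$ (i.e.\ $\zeta=-i$), the machinery in Section 4.3 produces a global holomorphic function whose leading part matches the pull-back of the standard quadratic holomorphic function on $(E,h)$; normalizing by requiring the value at the base point $o$ to be $0$ removes the additive-constant ambiguity. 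This yields complex-valued functions $u_1+iv_1$, $u_2+iv_2$, $u_3+iv_3$ with prescribed asymptotics $(-x_3+ix_2)^2$, $(2x_1+2ix_2)^2$, $(-2x_3+2ix_1)^2$, respectively.

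Next I would use the asymptotic identities on the standard ALF-$D_k$ model. Directly computing
\[
\operatorname{Re}\bigl[(2x_1+2ix_2)^2+(-2x_3+2ix_1)^2-4(-x_3+ix_2)^2\bigr]=0
\]
shows that $u_2+u_3-4u_1$ is a globally harmonic function on $M$ with growth rate strictly less than $r^2$. Because in the ALF-$D_k$ case Theorem \ref{growth-rate} forces harmonic growth orders to be even integers, and this function vanishes at the normalization point $o$, it must be identically zero, giving $4u_1=u_2+u_3$. This is the key algebraic identity the theorem asserts among the six real harmonic functions.

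Now I would assemble the candidate $z(p,\zeta)$ as specified in the statement. The definition of the twistor complex structure $\underline{I}$ and the standard computation (done explicitly on the Taub-NUT model near the end of Section 4.7) show that for each fixed $\zeta\in\mathbb{C}$, the function $\bigl(-x_3+ix_2-2x_1\zeta-(-x_3-ix_2)\zeta^2\bigr)^2$ is holomorphic on $(E,h)$ with respect to the complex structure $a_1I+a_2J+a_3K$ corresponding to that point of $\mathbb{CP}^1$. Therefore $z(p,\zeta)-$(the unique $(a_1I+a_2J+a_3K)$-holomorphic function on $M$ with the matching asymptotics) is $(a_1I+a_2J+a_3K)$-harmonic with growth strictly less than $r^2$, hence zero, for every $\zeta$. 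Varying $\zeta$ then shows $z$ is $\underline{I}$-holomorphic on the whole twistor space.

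Finally, the dependence on $\zeta$ is a degree-$4$ polynomial whose coefficients transform correctly under the transition to the antipodal chart $\tilde\zeta=-1/\bar\zeta$ via the real structure $\tau$, so $z$ is a holomorphic section of $\pi^*\mathcal{O}(4)$ over $Z$, i.e.\ a holomorphic map from $Z$ to the total space of $\mathcal{O}(4)\to\mathbb{CP}^1$ commuting with both the projection and the real structure. The main technical obstacle I anticipate is verifying that the asymptotics match the Taub-NUT model uniformly in $\zeta$ with error sufficiently subleading to apply the uniqueness step; once the error estimates from Section 4.3 are tracked carefully through the $\zeta$-family of complex structures, the polynomial identity among the $u_i,v_i$ follows from the same even-integer growth dichotomy used to establish $4u_1=u_2+u_3$.
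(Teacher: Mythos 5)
Your proposal is correct and follows essentially the same route as the paper: apply Theorem \ref{ALF-Dk} at the three distinguished complex structures $I,J,K$ to get normalized holomorphic functions $u_j+iv_j$, match the quadratic asymptotics against the explicit Taub--NUT model, use the even-integer growth dichotomy of Theorem \ref{growth-rate} together with the normalization at $o$ to force any $O(r^{2-\epsilon})$ harmonic difference to vanish (this gives both $4u_1=u_2+u_3$ and, for each fixed $\zeta$, the identification of $z(p,\zeta)$ with the unique holomorphic function of matching leading part), and finally observe that a degree-$4$ polynomial in $\zeta$ with these transformation properties is a section of $\pi^*\mathcal{O}(4)$ compatible with the real structure. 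The one place you over-worry is the ``uniformity in $\zeta$'' issue: since $z(p,\zeta)$ is a fixed polynomial in $\zeta$ whose coefficients are already pinned down by the asymptotics of the six functions $u_i,v_i$, the error against the model leading term is automatically $O(r^{2-\epsilon})$ for every fixed $\zeta$, and the uniqueness argument is applied pointwise in $\zeta$; no uniform estimate is needed.
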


 There is a real structure on the $\mathcal{O}(4)$ bundle $(\zeta,\eta) \rightarrow (-1/\bar\zeta,\bar\eta/\bar\zeta^{4})$.
 It is easy to see that the map $z$ commutes with the real structure.


\begin{thebibliography}{99}
 \bibitem{AndersonKronheimerLebrun} Anderson,Michael.T.; Kronheimer,P.B.; LeBrun,C.: Complete Ricci-flat K\"ahler manifolds of infinite topological type. Comm. Math. Phys. 125 (1989), no.4, 637-642. MR1024931.
 \bibitem{AtiyahHitchin} Atiyah,M.; Hitchin,N.J.: The geometry and dynamics of magnetic monopoles. M. B. Porter Lectures. Princeton University Press, Princeton, NJ, 1988. viii+134 pp. MR0934202.
 \bibitem{BandoKasueNakajima}  Bando,S.; Kasue,A.; Nakajima,H.: On a construction of coordinate at infinity on manifolds with fast curvature decay and maximal volume growth. Invent. Math. 97 (1989), no.2, 313-349. MR1001844.
 \bibitem{BiquardMinerbe} Biquard,O.; Minerbe,V.: A Kummer construction for gravitational instantons. Comm. Math. Phys.  308 (2011), no.3, 773-794. MR2855540.
 \bibitem{BuserKarcher} Buser,P.; Karcher,H.: Gromov's almost flat manifolds. Ast\'erisque, 81. Soci\'et\'e Math\'ematique de France, Paris, 1981. MR0619537.
 \bibitem{CheegerColding} Cheeger,J.; Colding,T.H.: Lower bounds on Ricci curvature and the almost rigidity of warped products. Ann. of Math. (2) 144 (1996), no.1, 189-237. MR1405949.
 \bibitem{CheegerFukayaGromov} Cheeger,J.; Fukaya,K.; Gromov,M.: Nilpotent structures and invariant metrics on collapsed manifolds. J. Amer. Math. Soc. 5 (1992), no.2, 327-372. MR1126118.
 \bibitem{CheegerGromoll} Cheeger,J; Gromoll,D: The splitting theorem for manifolds of nonnegative Ricci curvature. J. Differential Geometry 6 (1971/72), 119-128. MR0303460.
 \bibitem{CherkisHitchin} Cherkis,S.A.; Hitchin,N.J.: Gravitational instantons of type $D_k$. Comm. Math. Phys. 260 (2005), no.2, 299-317. MR2177322.
 \bibitem{CherkisKapustin} Cherkis,S.A.; Kapustin,A.: Singular monopoles and gravitational instantons. Comm. Math. Phys.  203 (1999), no.3, 713-728. MR1700937.
 \bibitem{Colding} Colding,T.H.: Ricci curvature and volume convergence. Ann. of Math. (2) 145 (1997), no.3, 477-501. MR1454700.
 \bibitem{Drees} Drees,G.: Asymptotically flat manifolds of nonnegative curvature. Differential Geom. Appl. 4 (1994), no.1, 77-90. MR1264910.
 \bibitem{Fukaya} Fukaya,K.: A boundary of the set of the Riemannian manifolds with bounded curvatures and diameters. J. Differential Geom. 28 (1988), no.1, 1-21. MR0950552.
 \bibitem{GilbargTrudinger} Gilbarg,D.; Trudinger,N.S.: Elliptic partial differential equations of second order. Reprint of the 1998 edition, Springer-Verlag, Berlin, 2001. MR1814364.
 \bibitem{GreeneWu} Greene, R. E.; Wu, H.: Function theory on manifolds which possess a pole. Lecture Notes in Mathematics, 699. Springer, Berlin, 1979. MR0521983.
 \bibitem{Gromov} Gromov,M.: Almost flat manifolds. J. Differential Geom. 13 (1978), no.2, 231-241. MR0540942.
 \bibitem{HaskinsHeinNordstrom} Haskins,M.; Hein,H.-J.; Nordstr\"om,J.: Asymptotically cylindrical Calabi-Yau manifolds. J. Differential Geom. 101 (2015), no. 2, 213-265. MR3399097.
 \bibitem{HauselHunsickerMazzeo} Hausel,T.; Hunsicker,E.; Mazzeo,R.: Hodge cohomology of gravitational instantons. Duke Math. J. 122 (2004), no.3, 485-548. MR2057017.
 \bibitem{Hawking} Hawking,S.: Gravitational instantons. Phys. Lett. 60A (1977) 81-83. MR0465052.
 \bibitem{Hein}  Hein,H.-J.: Gravitational instantons from rational elliptic surfaces. J. Amer. Math. Soc. 25(2012), no.2, 355-393. MR2869021.
 \bibitem{HithcinKarlhedLindstromRocek} Hitchin,N.J.; Karlhede, A.; Lindstr\"om, U.; Ro\v{c}ek, M.: Hyper-K\"ahler metrics and supersymmetry. Comm. Math. Phys. 108 (1987), no.4, 535-589. MR0877637.
 \bibitem{Hormander} H\"ormander,L: An introduction to complex analysis in several variables. Third edition. North-Holland Publishing Co., Amsterdam, 1990. MR1045639.
 \bibitem{IvanovRocek} Ivanov,I.T.; Ro\v{c}ek,M.: Supersymmetric $\sigma$-models, twistors, and the Atiyah-Hitchin metric. Comm. Math. Phys. 182 (1996), no.2, 291-302. MR1447294.
 \bibitem{JostKarcher} Jost,J\"urgen; Karcher,Hermann: Geometrische Methoden zur Gewinnung von a-priori-Schranken f\"ur harmonische Abbildungen. Manuscripta Math.  40 (1982), no.1, 27-77. MR0679120.
 \bibitem{Kasue} Kasue,A.: A compactification of a manifold with asymptotically nonnegative curvature. Ann. Sci. \'Ecole Norm. Sup. (4) 21 (1988), no.4, 593-622. MR0982335.
 \bibitem{Kodaira} Kodaira,K.: On compact analytic surface II. Ann. of Math. (2) 77 (1963), 563-626. MR0184257.
 \bibitem{Kronheimer1} Kronheimer,P.B.: The construction of ALE spaces as hyper-K\"ahler quotients. J. Differential Geom. 29 (1989), no.3, 665-683. MR0992334.
 \bibitem{Kronheimer2} Kronheimer,P.B.: A Torelli-type theorem for gravitational instantons. J. Differential Geom. 29 (1989), no.3, 685-697. MR0992335.
 \bibitem{LeBrun} LeBrun,C.: Complete Ricci-flat K\"ahler metrics on $C^n$ need not be flat. Several complex variables and complex geometry, Part 2 (Santa Cruz, CA, 1989),  297-304, Proc. Sympos. Pure Math., 52, Part 2, Amer. Math. Soc., Providence, RI, 1991. MR1128554.
 \bibitem{MashikoNaganoOtsuka} Mashiko,Y.; Nagano,K.; Otsuka,K.: The asymptotic cones of manifolds of roughly non-negative radial curvature. J. Math. Soc. Japan 57 (2005), no.1, 55-68. MR2114720.
 \bibitem{MinerbeMass} Minerbe,V.: A mass for ALF manifolds. Comm. Math. Phys. 289 (2009), no.3,925-955. MR2511656.
 \bibitem{MinerbeAsymptotic} Minerbe,V.: On the asymptotic geometry of gravitational instantons. Ann. Sci. \'Ec. Norm. Sup\'er. (4)43(2010), no.6, 883-924. MR2778451.
 \bibitem{MinerbeMultiTaubNUT} Minerbe,V.: Rigidity for multi-Taub-NUT metrics. J. Reine Angew. Math. 656 (2011), 47-58. MR2818855.
 \bibitem{Ruh} Ruh,Ernst A.: Almost flat manifolds. J. Differential Geom. 17 (1982), no.1, 1-14. MR0658470.
 \bibitem{TianYau} Tian,G.; Yau,S.-T.: Complete K\"ahler manifolds with zero Ricci curvature. I. J. Amer. Math. Soc. 3  (1990),  no.3, 579-609. MR1040196.
 \bibitem{Yau} Yau,S.T.: The role of partial differential equations in differential geometry. Proceedings of the International Congress of Mathematicians (Helsinki, 1978), pp.237-250, Acad. Sci. Fennica, Helsinki, 1980. MR0562611.
 \end{thebibliography}
 \end{document}